\newtheorem{remark}{Remark}[section]
\newtheorem{definition}{Definition}[section]
\newtheorem{proposition}{Proposition}[section]
\newtheorem{lemma}{Lemma}[section]
\newtheorem{theorem}{Theorem}[section]
\newtheorem{corollary}{Corollary}[section]
\date{}
\begin{document}
\title{Existence and symmetry results for some overdetermined free boundary problems}
\author{Mohammed Barkatou}
\date{}
\maketitle
\begin{abstract}
In this paper, we prove that a domain which verifies some integral inequality is
either (strictly) contained in the solution of some free boundary problem, or it coincides with an $N$-ball.
We also present new overdetermined value problems which have an $N$-ball as a solution.
To reach our results, we use an integral identity which involves the domain derivative of the solution of the corresponding Dirichlet problem.
\end{abstract}

\maketitle {\small {\noindent {\bf Keywords:} free boundaries, maximum principle, domain derivative, Laplace operator
\noindent {\bf 2010 MSC} 35A15, 35J65, 35B50

\section{Introduction}
Assuming throughout that: $D\subset \mathbb{R}^{N}\quad (N\geq 2)$
is a bounded ball which contains all the domains we use. If
$\omega$ is an open subset of $D$, let $\nu$ be the outward normal
to $\partial\omega$ and let $\left| \partial \omega \right|$
(respectively $\left| \omega \right|$) be the perimeter
(respectively the volume) of $\omega$. Let $f$ be a positive function
belonging to $L^2(\mathbb{R}^{N})$ and having a compact support
$K$ with nonempty interior. Denote by $C$ the convex hull of $K$.\\

Consider the following overdetermined boundary value problems.
\begin{equation*}
\mathcal{QS}(f,k) \left\{
\begin{array}{c}
-\Delta u_{\Omega}=f \quad \text{in  }\Omega\,, \\
u_{\Omega}=0\text{ on  }\partial \Omega\,, \\
 -\frac{\partial u_{\Omega}}{\partial \nu }=k=\text{cst.}\text{  on  }\partial
 \Omega.
\end{array}
\right.
\end{equation*}
and

\begin{equation*}
\mathcal{P}(f,c) \left\{
\begin{array}{c}
-\Delta u_{\Omega}=f \quad \text{ in  }\Omega\,,
 u_{\Omega}=0\text{  on  }\partial\Omega \quad\big\}P(\Omega,f)\,, \\
-\Delta v_{\Omega}=u_{\Omega} \quad \text{ in  }\Omega\,,
v_{\Omega}=0 \text{ on  }\partial \Omega \quad\big\}P(\Omega,u_{\Omega})\,,
\\
|\nabla u_{\Omega}||\nabla v_{\Omega}|=c=\text{cst.}\text{  on
}\partial\Omega.

\end{array}
\right.
\end{equation*}

\begin{remark}
The second problem is equivalent to the following biharmonic one:
\begin{equation*}
\left\{
\begin{array}{c}
\Delta^{2} v_{\Omega}=f \quad \text{ in  }\Omega\,,\\
 v_{\Omega}=\Delta v_{\Omega}=0\text{  on  }\partial\Omega\,, \\
|\nabla u_{\Omega}||\nabla v_{\Omega}|=c\text{  on
}\partial\Omega.

\end{array}
\right.
\end{equation*}
\end{remark}

Notice that since $u_{\Omega}$ (resp. $v_{\Omega}$) vanishes on $\partial\Omega$ then
$-\frac{\partial u_{\Omega}}{\partial \nu }=|\nabla u_{\Omega}|$
($-\frac{\partial v_{\Omega}}{\partial \nu }=|\nabla v_{\Omega}|$).\\

The problem $\mathcal{QS}(f,k)$ is called the quadrature surfaces free
boundary problem and arises in many areas of physics (free
streamlines, jets, Hele-show flows, electromagnetic shaping,
gravitational problems etc.) It has been intensively studied from
different points of view, by several authors. For more details
about the methods used for solving this problem see the
\cite[Introduction]{gs}. Imposing boundary conditions for both $u_{\Omega }$ and $|\nabla
u_{\Omega }|$ on $\partial\Omega$ makes problem $\mathcal{QS}(f,k)$
\textit{overdetermined}, so that in general without any
assumptions on data this problem has no solution. Gustafsson and Shahgholian \cite{gs}
conclude their paper by giving \cite[Theorem 4.7]{gs} the
following sufficient condition: If $\text{Supp}f\subset B_R$ and
if $\int_{B_R}f(x)dx>(\frac{6^NNc}{3R}|B_R|)$ with
$B_{3R}\subset\Omega_u$ ($B_R$ being some ball of radius $R$) then
$\mathcal{QS}(f,k)$ has a solution. The method used by Gustafsson and
Shahgholian goes back to K. Friedrichs \cite{fr}, or even to T. Carleman \cite{ca}, and was
considerably developed by H. W. Alt and L. A. Caffarelli
\cite{ac}. Recently, by combining the maximum principle to the
compatibility condition of the Neumann problem, Barkatou et al.
\cite{ba2} gave, $|\nabla u_C|>k\;\text{on}\;\partial C$ as a sufficient condition of existence for
$Q_S(f,g)$. Later, Barkatou \cite{ba1} showed that this problem admits a solution if and only if
the condition $\int_C f(x)dx>k|\partial C|$ is valid.\\

In 1971, Serrin \cite{se} proved that if Problem $\mathcal{QS}(1,k)$ has
a solution $u_{\Omega}\in C^{2}( \overline{\Omega }$) then
$\Omega$ must be an $N$-ball and $u_{\Omega}$ is radially symmetric. The
method used by Serrin combines the maximum principle together with
the device of moving planes \cite{gwn} to a critical position and
then showing that the solution is symmetric about the limiting
plane. In the same year, Weinberger \cite{w} gave a simplified
proof for this problem. His strategy of proof consists first on
showing that $\left| \nabla u\right| ^{2}+\frac{2}{N}u=k^{2}$ in
$\Omega$ and to derive a radial symmetry from this. A method which does not need the
maximum principle was developped by Payne and Schaefer \cite{Pa1}.
They developed integral identities which are equivalent to the problems they considered and led
to the conclusion that the domain $\Omega$ must be an $N$-ball. An other technique which does not
involve the maximum principle was introduced by Brock and Henrot \cite{bh}
(see also \cite{hc} or \cite{bk}). It consists on using the domain derivative
to get the same conclusion. For more details about the
symmetry results see \cite[Introduction]{FGK} and the references
therein. Fragal\`{a} et al \cite{FGK}, obtained their symmetry
result by combining the maximum principle for a suitable
$P$-function with some geometric arguments involving the mean
curvature of $\partial\Omega$.\\
The problem $\mathcal{P}(1,c)$ arises from variational problem in
Probability \cite{fm,km}. Fromm and McDonald \cite{fm} related
this problem to the fundamental result of Serrin. Then using the
moving plane method combining with Serrin's boundary point Lemma,
they showed that if this problem admits a solution $\Omega$ then
it must be an $N$-ball. Huang and Miller \cite{hm} established the
variational formulas for maximizing the functionals (they
considered) over $C^{k}$ domains with a volume constraint and
obtained the same symmetry result for their maximizers.\\

The problem $\mathcal{P}(f,c)$ was first studied in \cite{kb}. In general,
without any assumptions on data, the problem $\mathcal{P}(f,c)$ has no solution.
In \cite{kb}, by using the maximum principle, the authors showed that
if $|\nabla u_{C}||\nabla v_{C}|>c\text{  on }\partial C$, then
this problem has a solution. The aim here is to give a sufficient condition of
existence for the problem $\mathcal{P}(f,c)$ better than the earlier (see Section 3.).\\

In the present paper, we will use some integral inequality on $\Omega$ verified by $f$ and $k$ (respectively $c$) to
prove that the domain $\Omega$ is either (strictly) contained in a solution of the problem $\mathcal{QS}(f,\mu)$
(respectively $\mathcal{P}(f,\mu)$) for some constant $\mu$ or $\Omega$ is an $N$-ball.\\
Next, we will use integral identities involving the domain derivative of the solution of the Dirichlet problem
in order to show that the solution of a new overdetermined value problems (if it exists) must be an $N$-ball. We will also show
that stationary points of some functionals of a domain are balls.

\section{Preliminaries}

\begin{definition}
Let $K_{1}$ and $K_{2}$ be two compact subsets of $D.$ We call a
Hausdorff distance of $K_{1}$ and $K_{2}$ (or briefly
$d_{H}(K_{1},K_{2})),$ the following positive number:
\begin{equation*}
d_{H}(K_{1},K_{2})=\max \left[ \rho (K_{1},K_{2}),\rho
(K_{2},K_{1})\right] ,
\end{equation*}
where $\rho (K_{i},K_{j})={\max_{x\in K_{i}}}d(x,K_{j})$,
$i,j=1,2$\,, and $d(x,K_{j})={\min_{y\in K_{j}}}\left|
x-y\right|$\,.
\end{definition}

\begin{definition} \label{def:2.2}
Let $\omega _{n}$ be a sequence of open subsets of $D$ and let
$\omega $ be an open subset of $D$. Let $K_{n}$ and $K$ be their
complements in $\bar{D}$. We say that the sequence $\omega _{n}$
converges in the Hausdorff sense, to $\omega $ (or briefly $\omega
_{n}\overset{H}{\longrightarrow }\omega )$ if
\begin{equation*}
{\lim_{n\rightarrow +\infty}}d_{H}(K_{n},K)=0.
\end{equation*}
\end{definition}

\begin{definition}
Let $\{\omega _{n},\omega\}$ be a sequence of open subsets of $D$.
We say that the sequence $\omega _{n}$ converges in the compact
sense, to $\omega $\ (or briefly $\omega
_{n}\overset{K}{\longrightarrow }\omega $) if

\begin{itemize}
\item  every compact subset of $\omega$ is included in $\omega
_{n}$, for $n$ \textit{large enough}, and

\item  every compact subset of $\bar{\omega }^{c}$ is included in
$\overline{\omega }_{n}^{c}$, for $n$ \textit{large enough.}
\end{itemize}
\end{definition}

\begin{definition}
 Let $\{\omega _{n},\omega\}$ be a sequence of open subsets of
$D$. We say that the sequence $\omega _{n}$ converges in the sense
of characteristic functions, to $\omega $ (or briefly $\omega _{n}
\overset{L}{\longrightarrow }\omega $) if $\chi _{\omega _{n}}$
converges to $\chi _{\omega }$ in
$L_{\text{loc}}^{p}(\mathbb{R}^{N})$, $p\neq \infty$\,, ($\chi
_{\omega }$ is the characteristic function of $\omega $).
\end{definition}

\begin{definition} \cite{ba1}
Let $C$ be a compact convex set, the bounded domain
$\omega$ satisfies $C$-\textsc{gnp} if
\begin{enumerate}
\item  $\omega \supset \text{int}(C)$,

\item  $\partial \omega \setminus C$ is locally Lipschitz,

\item  for any $c\in \partial C$ there is an outward normal ray
$\Delta _{c}$ such that $\Delta _{c}\cap \omega $ is connected,
and

\item  for every $x\in \partial \omega \setminus C$ the inward
normal ray to $\omega $ (if exists) meets $C$.
\end{enumerate}
\end{definition}

\begin{remark}
If $\Omega $ satisfies the $C$-\textsc{gnp} and $C$ has a nonempty
interior, then $\Omega $ is connected.
\end{remark}
Put
\begin{equation*}
\mathcal{O}_{C}=\left\{ \omega \subset D: \omega\quad
\text{satisfies }C-\textsc{gnp}\right\}.
\end{equation*}
\begin{theorem}
If $\omega _{n}\in \mathcal{O}_{C}$\,, then there exist an open
subset $\omega \subset D$ and a subsequence (again denoted by
$\omega _{n}$) such that (i) $\omega
_{n}\overset{H}{\longrightarrow }\omega $\,, (ii) $\omega
_{n}\overset{K}{\longrightarrow }\omega$\,, (iii) $\chi _{\omega
_{n}}$ converges to\ $\chi _{\omega }$\ in $L^{1}(D)$ and (iv)
$\omega\in \mathcal{O}_{C}$. Furthermore, the assertions (i), (ii)
and (iii) are equivalent.
\end{theorem}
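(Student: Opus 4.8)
The plan is to read this as a compactness-and-stability statement for the class $\mathcal{O}_C$, following the classical scheme for families of domains that enjoy a common regularity property. I would organize the argument into three blocks: extraction of a Hausdorff-convergent subsequence and identification of the candidate limit, the mutual equivalence of assertions (i)--(iii), and finally the stability of the $C$-\textsc{gnp} under passage to the limit, which is assertion (iv).

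First I would produce the limit. Since every $\omega_n\subset D$ with $D$ a fixed bounded ball, the complements $K_n=\bar D\setminus\omega_n$ form a sequence of compact subsets of the compact set $\bar D$. The space of compact subsets of $\bar D$ endowed with the Hausdorff distance $d_H$ is itself compact (Blaschke selection), so along a subsequence $d_H(K_n,K)\to 0$ for some compact $K\subset\bar D$; setting $\omega=\bar D\setminus K$ gives an open subset of $D$ whose complement in $\bar D$ is exactly $K$, which is assertion (i) by Definition \ref{def:2.2}.

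Next, the equivalences. One inclusion of the compact convergence is free from (i): if $L\subset\omega$ is compact then $d(L,K)>0$, and since every point of $K_n$ lies within $\rho(K_n,K)\to 0$ of $K$, eventually $L\cap K_n=\emptyset$, so $L\subset\omega_n$. The reverse inclusion, together with the implication (ii)$\Rightarrow$(iii), is where the uniform regularity of the family becomes essential: condition 2 of the $C$-\textsc{gnp} gives $\partial\omega_n\setminus C$ a uniform Lipschitz character, which forbids the boundaries from developing thin spikes or cusps and yields a uniform bound on the measure of tubular neighborhoods of the boundaries. From this one controls the symmetric difference and obtains $|\omega_n\,\triangle\,\omega|\to 0$, i.e. $\chi_{\omega_n}\to\chi_\omega$ in $L^1(D)$ (assertion (iii)), and conversely recovers Hausdorff convergence from $L^1$ convergence, closing the cycle (i)$\Leftrightarrow$(ii)$\Leftrightarrow$(iii).

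The main obstacle is assertion (iv), that $\omega\in\mathcal{O}_C$. Condition 1 is easy: since $\mathrm{int}(C)\subset\omega_n$ for all $n$, each $K_n\subset\bar D\setminus\mathrm{int}(C)$, a closed constraint that passes to the Hausdorff limit, so $K\subset\bar D\setminus\mathrm{int}(C)$, whence $\omega\supset\mathrm{int}(C)$. The delicate points are conditions 2--4. For condition 2 I would transfer the common Lipschitz constant to the limit: writing $\partial\omega_n\setminus C$ locally as graphs of Lipschitz functions with a shared constant, Arzel\`a--Ascoli extracts uniformly convergent graphs whose limit is Lipschitz with the same constant and coincides with $\partial\omega\setminus C$. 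Conditions 3 and 4, which concern the outward and inward normal rays $\Delta_c$ and the connectedness of $\Delta_c\cap\omega$, are the hardest, because normals need not be continuous under bare Hausdorff convergence; here I would exploit the uniform Lipschitz structure to control the directions of the normal rays along the convergent graphs and to show that the connectedness of $\Delta_c\cap\omega_n$ and the property that inward normals meet $C$ are closed conditions in the limit. I expect this verification that the geometric normal-ray properties survive the limit to be the technical heart of the proof.
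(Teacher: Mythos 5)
First, a point of comparison: the paper does not prove this theorem in the text at all; it simply cites \cite{ba1} (Theorem 3.1 and Propositions 3.4--3.8 there) for both the compactness and the equivalence of (i)--(iii). So your proposal must be judged against the argument of that reference. Your first block is fine: extracting a Hausdorff limit by Blaschke selection applied to the complements $K_n=\bar D\setminus\omega_n$, and deducing from (i) that every compact $L\subset\omega$ satisfies $L\subset\omega_n$ for $n$ large, are correct and standard. The problem is everything you build afterwards on a ``uniform Lipschitz character'' of the family.

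Condition 2 of the $C$-\textsc{gnp} says only that each $\partial\omega_n\setminus C$ is \emph{locally} Lipschitz; it attaches no Lipschitz constant shared across the sequence, and the class $\mathcal{O}_C$ is designed precisely to accommodate domains that are not uniformly Lipschitz (the cited source \cite{ba1} is titled ``Some geometric properties for a class of \emph{non Lipschitz}-domains''). Consequently your Arzel\`a--Ascoli extraction of graphs ``with a shared constant'', the uniform bound on tubular neighborhoods used to get $|\omega_n\,\triangle\,\omega|\to 0$, and the proposed control of normal-ray directions in the limit all rest on a hypothesis that is not available; and without some uniform geometric control the statements are simply false for general open sets --- (i), (ii), (iii) are inequivalent (thin tentacles and cracks are the standard counterexamples), which is exactly why the theorem is restricted to $\mathcal{O}_C$. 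The ingredient that actually replaces uniform Lipschitz regularity is Proposition 2.2, the equivalence $C$-\textsc{gnp} $\Leftrightarrow$ $C$-\textsc{sp}: for every $x\in\partial\omega_n\setminus C$ the cone $K_x$ (which depends only on $C$ and $x$, not on $n$, hence is ``uniform'' in the sequence) satisfies $K_x\cap\omega_n=\emptyset$. This exterior-cone property is what forbids spikes and cusps, drives the equivalence of the three convergences, and --- being a closed condition that passes to Hausdorff limits, by the same computation the paper carries out at the end of Section 2 for the deformations $\omega_t$ --- delivers assertion (iv), transferring back to the \textsc{gnp} via Proposition 2.2. Your proof would need to be rebuilt on that cone property rather than on a nonexistent common Lipschitz constant.
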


Barkatou proved this theorem \cite[Theorem 3.1]{ba1} and the
equivalence between (i), (ii) and (iii) \cite[Propositions 3.4,
3.5, 3.6, 3.7 and 3.8]{ba1}.

\begin{proposition}
Let $\left\{ \omega _{n},\omega \right\}\subset \mathcal{O}_{C}$
such that $\omega _{n}\overset{H}{\longrightarrow }\omega $\,. Let
$u_{n}$ and $u_{\omega }$ be respectively the solutions of
$P(\omega _{n})$ and $P(\omega)$\,. Then $u_{n}$ converges
strongly in $H_{0}^{1}(D)$ to $u_{\omega }$ ($u_{n}$ and
$u_{\omega }$ are extended by zero in $D$).
\end{proposition}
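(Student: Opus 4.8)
The plan is to follow the standard strategy for continuity of Dirichlet solutions under domain perturbation, exploiting that, by the compactness theorem above, the Hausdorff convergence $\omega_n\overset{H}{\longrightarrow}\omega$ entails both the compact convergence $\omega_n\overset{K}{\longrightarrow}\omega$ and $\chi_{\omega_n}\to\chi_\omega$ in $L^1(D)$, and that $\omega\in\mathcal{O}_C$. First I would record the uniform energy estimate: testing $-\Delta u_n=f$ in $\omega_n$ against $u_n\in H_0^1(\omega_n)$ gives $\int_D|\nabla u_n|^2=\int_D f u_n$ (the extensions by zero being genuine $H_0^1(D)$ functions), whence by the Poincar\'e inequality on the fixed ball $D$ and Cauchy--Schwarz, $\|u_n\|_{H_0^1(D)}\le C\|f\|_{L^2}$ with $C$ independent of $n$. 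Thus a subsequence (not relabelled) satisfies $u_n\rightharpoonup u^\ast$ weakly in $H_0^1(D)$ and, by Rellich's theorem, $u_n\to u^\ast$ strongly in $L^2(D)$.

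Next I would identify the limiting equation. Fix $\varphi\in C_c^\infty(\omega)$; by compact convergence $\operatorname{supp}\varphi\subset\omega_n$ for $n$ large, so $\varphi$ is an admissible test function for $P(\omega_n)$ and $\int_D\nabla u_n\cdot\nabla\varphi=\int_D f\varphi$. Passing to the weak limit yields $\int_D\nabla u^\ast\cdot\nabla\varphi=\int_D f\varphi$ for every such $\varphi$, that is $-\Delta u^\ast=f$ in $\mathcal{D}'(\omega)$.

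The hard part will be to show that the weak limit is admissible, namely $u^\ast\in H_0^1(\omega)$; this is the membership (upper-semicontinuity) half of Mosco convergence of the spaces $H_0^1(\omega_n)$ to $H_0^1(\omega)$. Here I would use the geometry encoded in $C$-\textsc{gnp}: the compact convergence of the complements forces $u^\ast=0$ a.e.\ on $(\overline{\omega})^c$, so $u^\ast$ is supported in $\overline{\omega}$; since $\operatorname{int}(C)\subset\omega$ contributes no boundary in the interior, the only boundary to control is $\partial\omega\setminus C$, which is locally Lipschitz, while the normal-ray conditions of $C$-\textsc{gnp} rule out the inward cusps that would otherwise spoil the zero-trace characterization. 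On such a set the standard fact $H_0^1(\omega)=\{v\in H^1(D):v=0\text{ a.e.\ outside }\omega\}$ applies and gives $u^\ast\in H_0^1(\omega)$. Combined with the equation from the previous step and uniqueness for $P(\omega)$, this identifies $u^\ast=u_\omega$; as the limit is independent of the extracted subsequence, the whole sequence converges.

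Finally I would upgrade weak to strong convergence by a norm argument. Since $u_n\to u_\omega$ strongly in $L^2(D)$ and $f\in L^2$, one has $\|u_n\|_{H_0^1(D)}^2=\int_D f u_n\to\int_D f u_\omega=\|u_\omega\|_{H_0^1(D)}^2$; weak convergence together with convergence of the norms in the Hilbert space $H_0^1(D)$ then yields $u_n\to u_\omega$ strongly, as claimed. The one point demanding genuine care is the uniformity in the membership step: one must know that the Lipschitz character of $\partial\omega_n\setminus C$ and the $C$-\textsc{gnp} constraints provide a uniform exterior (segment or cone) regularity, so that no capacity is lost in the limit. This uniformity is precisely what the closedness $\omega\in\mathcal{O}_C$ of the class, established in the compactness theorem above, reflects.
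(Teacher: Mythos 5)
Your skeleton --- uniform energy bound via Poincar\'e on $D$, extraction of a weak $H_0^1(D)$ limit $u^\ast$, passage to the limit in the equation against $\varphi\in C_c^\infty(\omega)$ using compact convergence, identification of the limit by uniqueness, and the upgrade from weak to strong convergence through convergence of the norms $\int_D|\nabla u_n|^2=\int_D f u_n\to\int_D f u_\omega$ --- is the standard and correct frame, and each of those steps is fine as written. But the entire difficulty of the proposition is concentrated in the one step you dispatch as a ``standard fact'': that $u^\ast\in H^1(D)$ with $u^\ast=0$ a.e.\ outside $\omega$ implies $u^\ast\in H_0^1(\omega)$. This zero-trace characterization is a \emph{stability} property of the limit domain; it holds for Lipschitz domains, but the sets of $\mathcal{O}_C$ are precisely \emph{not} Lipschitz in general --- the source where this class is introduced, \cite{ba1}, is explicitly about ``non Lipschitz-domains''. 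The $C$-\textsc{gnp} makes $\partial\omega\setminus C$ \emph{locally} Lipschitz, with no uniformity as one approaches $C$, and it says nothing of this kind on $\partial\omega\cap\partial C$, where cusps are allowed. What the geometry actually supplies there (via the $C$-\textsc{sp}) is that the exterior normal cone $K_x$ lies in the complement for $x\in\partial\omega\setminus C$; but these cones degenerate as $x$ approaches $C$, in the limit to something as thin as a normal ray of $C$. Whether such degenerate exterior sets keep the complement thick enough is a \emph{capacitary}, hence dimension-dependent, question: a ray or segment has positive capacity in $\mathbb{R}^2$ but zero capacity in $\mathbb{R}^N$ for $N\geq 3$. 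This is exactly why the paper does not prove the proposition but cites \cite[Theorem 4.3]{ba1}, where it is established only for $N=2$ or $3$; your argument, as written, claims it in every dimension by appealing to a fact that does not apply to this class.

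Your closing sentence --- that the required uniform exterior regularity ``is precisely what the closedness $\omega\in\mathcal{O}_C$ of the class reflects'' --- is a non sequitur: closedness of $\mathcal{O}_C$ under Hausdorff convergence (Theorem 2.1(iv)) is a purely geometric statement about the class and is compatible, in principle, with the presence of unstable domains in it (slit-type domains are the classical example of an $H$-closed family with no Dirichlet stability). To close the gap you would need a genuine capacitary argument, e.g.\ a uniform lower bound $\mathrm{cap}\bigl((D\setminus\omega)\cap B_r(x)\bigr)\geq c\,\mathrm{cap}\bigl(B_r(x)\bigr)$ for all $x\in\partial\omega$ and small $r$, deduced from the normal-cone structure of the $C$-\textsc{sp} --- which is where the restriction $N\leq 3$ enters --- or a direct proof of the Mosco-type membership as in \cite{ba1}. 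A smaller omission: to go from $u^\ast=0$ a.e.\ on $(\overline{\omega})^c$ (which compact convergence gives you) to $u^\ast=0$ a.e.\ on $D\setminus\omega$ you also need $|\partial\omega|=0$; this does hold here, since $\partial\omega\setminus C$ is locally Lipschitz and $\partial\omega\cap C\subset\partial C$ with $C$ convex, but it should be stated.
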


This proposition was proven for $N=2$ or $3$ \cite[Theorem
4.3]{ba1}).

\begin{definition}
Let $C$ be a convex set. We say that an open subset $\omega $ has
the $C$-\textsc{sp}, if

\begin{enumerate}
\item  $\omega \supset \text{int}(C)$\,,

\item  $\partial \omega \setminus C$ is locally Lipschitz,

\item  for any $c\in \partial C$ there is an outward normal ray
$\Delta _{c}$ such that $\Delta _{c}\cap \omega $ is connected,
and

\item  for all $x\in \partial \omega \setminus C \quad K_{x}\cap
\omega =\emptyset$\,, where $K_{x}$ is the closed cone defined by
\begin{equation*}
\left\{ y\in \mathbb{R}^{N} : (y-x).(z-x)\leq 0\,, \text{ for all
} z\in C\right\}.
\end{equation*}
\end{enumerate}
\end{definition}

\begin{remark}
$K_{x}$ is the normal cone to the convex hull of $C$ and $\{x\}$.
\end{remark}

\begin{proposition}\cite[Proposition 2.3]{ba1}
$\omega$ has the $C$-\textsc{gnp} if and only if $\omega $
satisfies the $C$-\textsc{sp}.
\end{proposition}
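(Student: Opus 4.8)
The plan is to exploit that the definitions of the $C$-\textsc{gnp} and the $C$-\textsc{sp} share conditions (1), (2) and (3) verbatim; hence, fixing an open set $\omega$ that satisfies these three (which I may therefore use freely), it suffices to prove that condition (4) of the $C$-\textsc{gnp} is equivalent to condition (4) of the $C$-\textsc{sp}. First I would recast both in the language of convex cones. For $x\in\partial\omega\setminus C$ set $T_x=\{\lambda(z-x):\lambda\ge 0,\ z\in C\}$, the cone of directions pointing from $x$ into $C$; since $C$ is compact and convex and $x\notin C$, this cone is closed and convex, and the inward normal ray at $x$ meets $C$ \emph{exactly when} its direction $n_x$ lies in $T_x$. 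On the other hand, writing the defining inequality of $K_x$ as $(y-x)\cdot(z-x)\le 0$ for all $z\in C$, one sees that $K_x=x+T_x^{\circ}$, the apex-$x$ translate of the polar cone; this is precisely the content of the Remark, $K_x$ being the normal cone at $x$ to $\mathrm{conv}(C\cup\{x\})$. I would also record the elementary fact that $K_x\cap C=\varnothing$ whenever $x\notin C$ (take $z'=z$ in the defining inequality), so that every point produced inside $K_x$ automatically lies outside $C$.

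For the implication $C$-\textsc{sp} $\Rightarrow$ $C$-\textsc{gnp} I would argue by contraposition, at a differentiability point $x$ of the (locally Lipschitz) boundary where $n_x\notin T_x$. By the bipolar theorem $T_x=(T_x^{\circ})^{\circ}$, the failure of $n_x\in T_x$ yields a direction $w\in T_x^{\circ}$ with $w\cdot n_x>0$. Then $x+\varepsilon w\in K_x$ for every $\varepsilon>0$ (a cone with apex $x$), while $w\cdot n_x>0$ makes $w$ strictly inward, so $x+\varepsilon w\in\omega$ for small $\varepsilon$. This exhibits a point of $K_x\cap\omega$, contradicting the $C$-\textsc{sp}.

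For the converse $C$-\textsc{gnp} $\Rightarrow$ $C$-\textsc{sp} I would argue by contradiction: suppose the $C$-\textsc{gnp} holds yet some $y\in K_x\cap\omega$, and put $w_0=y-x\in T_x^{\circ}$, so that $w_0\cdot z\le w_0\cdot x$ for all $z\in C$. Travelling along the segment $[x,y]$, let $x'=x+b\,w_0$ with $b=\inf\{\beta\in[0,1]:x+\beta w_0\in\omega\}$; since $\{\beta:x+\beta w_0\in\omega\}$ is open and contains $\beta=1$ but not $\beta=0$, the point $x'$ lies on $\partial\omega$ and $w_0$ is strictly inward there, while $x'\in K_x$ (hence $x'\notin C$). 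At such a boundary point the $C$-\textsc{gnp} forces $n_{x'}=\lambda(\bar z-x')$ for some $\bar z\in C$ and $\lambda>0$; combining $w_0\cdot n_{x'}>0$ with $w_0\cdot x'=w_0\cdot x+b\,|w_0|^2\ge w_0\cdot x$ gives $w_0\cdot\bar z>w_0\cdot x'\ge w_0\cdot x$, contradicting $w_0\cdot\bar z\le w_0\cdot x$. Hence $K_x\cap\omega=\varnothing$, which is the $C$-\textsc{sp}.

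The main obstacle I anticipate is one of regularity bookkeeping rather than of geometry: both arguments evaluate the inward normal and invoke strict inwardness at the boundary point $x$ or $x'$, yet $\partial\omega\setminus C$ is only assumed locally Lipschitz, so the normal exists merely almost everywhere. I would dispose of this by working at differentiability points and, where the constructed point $x'$ happens to be non-smooth or the crossing tangential, perturbing the base point $y$ inside the open set $K_x\cap\omega$ (or the direction $w_0$) to reach a transversal, differentiable entering point; the cone/half-space description of a Lipschitz boundary guarantees that such nearby good points are available. Conditions (1) and (3), being shared by both definitions, are used only to ensure the configuration is the intended one (namely $C\subset\overline\omega$ with $\omega$ connected), so no further hypotheses are required.
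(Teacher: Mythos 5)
First, a point of comparison that cannot be made: the paper contains no proof of this proposition at all --- it is imported by citation from \cite[Proposition 2.3]{ba1} --- so what follows is an assessment of your argument on its own terms rather than against an in-paper proof.

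Your cone formalism is sound: since $C$ is compact convex and $x\notin C$, the cone $T_x$ is closed, convex and pointed, $K_x=x+T_x^{\circ}$, and $K_x\cap C=\emptyset$. The direction $C$-\textsc{sp} $\Rightarrow$ $C$-\textsc{gnp} is essentially complete: at a differentiability point whose inward normal misses $C$, bipolarity gives $w\in T_x^{\circ}$ with $w\cdot n_x>0$, and such a transversal direction enters $\omega$, producing a point of $K_x\cap\omega$. In the converse two inaccuracies are harmless and repairable: $K_x\cap\omega$ is not open ($K_x$ is closed), and approach of $\omega$-points to $x'$ along $w_0$ only gives $w_0\cdot n_{x'}\ge 0$, not $>0$. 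Both are cured by first sliding $y$ to a point of $\mathrm{int}(K_x)\cap\omega$ (nonempty: $T_x$ pointed forces $\mathrm{int}(T_x^{\circ})\neq\emptyset$, and convexity lets you move $y$ into the interior); then $w_0\cdot\bar z<w_0\cdot x$ strictly, and the chain $w_0\cdot\bar z\ge w_0\cdot x'\ge w_0\cdot x$ is contradictory for every $b\ge 0$.

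The genuine gap is the differentiability of $\partial\omega$ at the entering point, and your proposed repair --- perturb $y$ inside $K_x\cap\omega$, or perturb the direction $w_0$ --- cannot close it, because it fails exactly in the case $b=0$, when the whole segment $(x,y]$ lies in $\omega$. Then the entering point is the apex $x$ itself, and it remains $x$ under \emph{every} admissible perturbation, since all your segments emanate from $x$; if $x$ is a corner, the \textsc{gnp} (which constrains the normal only ``if it exists'') imposes nothing there and no contradiction appears. This situation is not hypothetical. In $\mathbb{R}^{2}$ take $C=\overline{B((0,2),1)}$, let $O$ be the origin, $T=\mathrm{conv}(\{O\}\cup C)\setminus\mathrm{int}(C)$ (an ``ice-cream cone'' with the ice cream removed), and $\omega=B((0,2),3)\setminus T$. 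Conditions (1)--(3) hold; $O$ is a corner of $\partial\omega$ where the two tangent segments meet, so no inward normal exists there; a short computation gives $K_O\setminus\{O\}\subset\{x_2<-|x|/2\}$ while $T\subset\{x_2\ge 0\}$, so $K_O\cap\omega\neq\emptyset$ and every segment from $O$ into $K_O\cap\omega$ lies in $\omega$ except for $O$. Your machinery, started at this \textsc{sp}-violation, produces nothing. (The proposition itself is safe: the \textsc{gnp} also fails for this $\omega$, namely on the two open tangent segments, whose inward normals point away from $C$ --- but those points lie in $\{x_2>0\}$, on no segment from $O$ into $K_O$, so your argument never inspects them.)

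What is missing is a device that exploits the \textsc{gnp} at boundary points \emph{off} the rays through $x$. One way to finish: with $v=w_0\in\mathrm{int}(T_x^{\circ})$, compactness of $C$ gives $v\in\mathrm{int}(T_{\xi}^{\circ})$ uniformly for all $\xi\in\partial\omega$ near the entering point $x'$; writing $\partial\omega$ near $x'$ as a Lipschitz graph $\phi$ with $\omega$ above it, the \textsc{gnp} at a.e.\ nearby graph point (Rademacher) combined with $v\cdot n_{\xi}<0$ yields the a.e.\ inequality $\nabla\phi(s)\cdot v'>v_N$; integrating along lines parallel to $v'$ (absolute continuity on a.e.\ line, then continuity of $\phi$) gives $\phi(s_1+\tau v')-\phi(s_1)\ge\tau v_N$ for \emph{all} $s_1$ near the base point, i.e.\ no point of $\partial\omega$ near $x'$ has $\omega$-points just above it in direction $v$ --- contradicting $x+\beta_n w_0\in\omega$ with $\beta_n\downarrow b$. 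This argument needs no differentiability at $x'$ at all, and it subsumes both the $b>0$ and the apex case; without something of this kind, the direction $C$-\textsc{gnp} $\Rightarrow$ $C$-\textsc{sp} remains unproven.
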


\begin{proposition}\cite[Theorem 3.5]{bt}
Let $v_{n}$ and $\;v_{\omega }$ be respectively the solutions of
the Dirichlet problems $P(\omega _{n},g_{n})$ and $P(\omega ,g )$.
If $g_{n}$ converges strongly in $H^{-1}(D)$ to $g$ then $v_{n}$
converges strongly in $H_{0}^{1}(D)$ to $v_{\omega }$ ($v_{n}$ and
$v_{\omega }$ are extended by zero in $D$).
\end{proposition}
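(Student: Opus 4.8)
The plan is to decouple the two sources of perturbation---the moving domain and the varying right-hand side---by a triangle-inequality argument. First I would introduce the auxiliary functions $w_{n}\in H_{0}^{1}(\omega_{n})$ defined as the solutions of the Dirichlet problem $P(\omega_{n},g)$ with the \emph{fixed} limiting datum $g$, all functions being extended by zero to $D$. Writing
\[
v_{n}-v_{\omega}=(v_{n}-w_{n})+(w_{n}-v_{\omega}),
\]
it then suffices to show that each of the two terms tends to zero strongly in $H_{0}^{1}(D)$.

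For the first term, note that $z_{n}:=v_{n}-w_{n}\in H_{0}^{1}(\omega_{n})\subset H_{0}^{1}(D)$ solves $-\Delta z_{n}=g_{n}-g$ in $\omega_{n}$. Testing the weak formulation against $z_{n}$ itself, and using that all the domains are contained in the fixed ball $D$ so that the gradient seminorm is an equivalent norm with uniform Poincar\'e constant, one gets
\[
\|\nabla z_{n}\|_{L^{2}(D)}^{2}=\langle g_{n}-g,z_{n}\rangle\leq \|g_{n}-g\|_{H^{-1}(D)}\,\|\nabla z_{n}\|_{L^{2}(D)},
\]
whence $\|z_{n}\|_{H_{0}^{1}(D)}\leq \|g_{n}-g\|_{H^{-1}(D)}\to 0$. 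This step is elementary and does not involve the geometry of the domains.

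The second term is where the geometric hypotheses enter. Convergence of $w_{n}\to v_{\omega}$ for a \emph{fixed} right-hand side is precisely the content of the preceding Proposition (established there for the datum $f$), whose proof carries over verbatim for any $g\in H^{-1}(D)$, since only the membership $g\in H^{-1}(D)$ is used. For completeness, the argument runs as follows: the uniform bound $\|w_{n}\|_{H_{0}^{1}(D)}\leq \|g\|_{H^{-1}(D)}$ yields, up to a subsequence, a weak limit $w^{*}$ in $H_{0}^{1}(D)$; using the compact convergence $\omega_{n}\overset{K}{\longrightarrow}\omega$ furnished by Theorem 2.1, any $\varphi\in C_{c}^{\infty}(\omega)$ belongs to $H_{0}^{1}(\omega_{n})$ for $n$ large, so passing to the limit in $\int_{\omega_{n}}\nabla w_{n}\cdot\nabla\varphi=\langle g,\varphi\rangle$ identifies $w^{*}$ as a solution of $P(\omega,g)$, provided one first checks that $w^{*}\in H_{0}^{1}(\omega)$. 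By uniqueness $w^{*}=v_{\omega}$, and the whole sequence converges. Finally the energy identity $\|w_{n}\|_{H_{0}^{1}(D)}^{2}=\langle g,w_{n}\rangle\to\langle g,v_{\omega}\rangle=\|v_{\omega}\|_{H_{0}^{1}(D)}^{2}$ upgrades weak convergence to strong convergence in the Hilbert space $H_{0}^{1}(D)$.

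The main obstacle is the identification $w^{*}\in H_{0}^{1}(\omega)$, that is, the assertion that no part of the gradient energy escapes the limiting domain and that the limit retains zero boundary values. This is false for arbitrary sequences of domains (as the Neumann-sieve phenomenon shows) and is exactly the point at which the membership $\{\omega_{n},\omega\}\subset\mathcal{O}_{C}$ is indispensable: the $C$-\textsc{gnp} forces a uniform exterior geometric condition along $\partial\omega_{n}$, and through Theorem 2.1 it guarantees the simultaneous Hausdorff, compact, and characteristic-function convergences needed to control $w_{n}$ near the moving boundary. Combining the two estimates through the displayed decomposition then yields $v_{n}\to v_{\omega}$ strongly in $H_{0}^{1}(D)$.
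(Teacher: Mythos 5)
First, a point of comparison: the paper does not prove this proposition at all --- it is imported verbatim from Bucur and Trebeschi \cite[Theorem 3.5]{bt} --- so your argument cannot be measured against an internal proof; it has to stand on its own. Judged that way, the decomposition $v_{n}-v_{\omega}=(v_{n}-w_{n})+(w_{n}-v_{\omega})$ is the right first move, and your treatment of the first term is correct and complete: testing $-\Delta z_{n}=g_{n}-g$ against $z_{n}$ and using the uniform Poincar\'e inequality on $D$ gives $\|z_{n}\|_{H_{0}^{1}(D)}\leq C\|g_{n}-g\|_{H^{-1}(D)}\rightarrow 0$, with no geometric input. This genuinely reduces the proposition to its special case $g_{n}\equiv g$.

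The gap is that this special case is neither available in the paper nor proven by you. Proposition 2.1 concerns only the specific datum $f$ of the paper (positive, in $L^{2}$, supported in $C$), and the paper records that it is proven in \cite{ba1} only for $N=2$ or $3$; your assertion that its proof ``carries over verbatim for any $g\in H^{-1}(D)$'' is a claim about an argument you have not seen, and it is not harmless: a proof written for $f$ may use positivity (barriers, the maximum principle, monotonicity of the $u_{n}$), none of which makes sense for a general distribution $g\in H^{-1}(D)$. Your own sketch concedes exactly the decisive point: from the compact convergence furnished by Theorem 2.1 one only gets that the weak limit $w^{*}$ vanishes a.e.\ on $\bar{\omega}^{c}$, and for a general open set this does \emph{not} imply $w^{*}\in H_{0}^{1}(\omega)$ --- boundary values can be lost on subsets of $\partial\omega$ of positive capacity and zero measure. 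Converting the $C$-\textsc{gnp} into the capacity or stability estimate that forbids this loss is the entire analytic content of the proposition (and presumably the source of the dimension restriction in \cite{ba1}); it is precisely the step you label ``the main obstacle'' and then do not carry out. As it stands, the proposal is a correct reduction of the statement to an unproven instance of itself.
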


\begin{lemma}\cite{bz,pi}
Let $\omega_{n}$ be a sequence of open and bounded subsets of $D$.
There exist a subsequence (again denoted by $\omega_{n}$) and some
open subset $\omega$ of $D$ such that
\begin{enumerate}
    \item $\omega_{n}$ converges to $\omega$ in the Hausdorff
    sense, and
    \item
    $|\omega|\leq\liminf_{n\rightarrow\infty}|\omega_{n}|$.
\end{enumerate}
\end{lemma}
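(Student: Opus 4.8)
The plan is to split the statement into the compactness assertion (i) and the semicontinuity assertion (ii), and to reduce everything to a statement about the complementary compact sets, in the spirit of Definition \ref{def:2.2}. First I would set $K_{n}=\bar{D}\setminus\omega_{n}$; since each $\omega_{n}$ is open and contained in $D$, every $K_{n}$ is a compact subset of the fixed compact set $\bar{D}$, and moreover $\partial D\subset K_{n}$ for all $n$. The space of nonempty compact subsets of the compact metric space $\bar{D}$, endowed with $d_{H}$, is itself compact (Blaschke selection theorem, i.e. completeness and total boundedness of the hyperspace), so I may extract a subsequence and find a compact $K\subset\bar{D}$ with $d_{H}(K_{n},K)\to 0$. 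I then set $\omega=\bar{D}\setminus K$. Because the fixed compact set $\partial D$ lies in every $K_{n}$, passing to the Hausdorff limit gives $\partial D\subset K$, whence $\omega=D\setminus K$ is open and $\omega\subset D$. By Definition \ref{def:2.2} this is exactly the assertion $\omega_{n}\overset{H}{\longrightarrow}\omega$, which proves (i).

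The core of the argument is (ii), and here I would exploit the fact that Hausdorff convergence controls the complements in one direction: for every $\varepsilon>0$ one has $K_{n}\subset\{x:d(x,K)\le\varepsilon\}$ for all $n$ large. Fix $x\in\omega$. Since $\omega$ is open and $K$ is closed, $\delta:=d(x,K)>0$; taking $\varepsilon=\delta/2$, if $x\in K_{n}$ then $d(x,K)\le\delta/2<\delta$, a contradiction, so $x\notin K_{n}$ and hence $x\in\bar{D}\setminus K_{n}=\omega_{n}$ for all $n$ large. Consequently every point of $\omega$ eventually belongs to $\omega_{n}$, which yields the pointwise inequality $\chi_{\omega}(x)\le\liminf_{n}\chi_{\omega_{n}}(x)$ for every $x$ (the inequality being trivial wherever $\chi_{\omega}=0$).

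It then remains only to integrate this pointwise inequality. Applying Fatou's lemma to the nonnegative functions $\chi_{\omega_{n}}$ gives
\begin{equation*}
|\omega|=\int_{D}\chi_{\omega}\,dx\le\int_{D}\liminf_{n\to\infty}\chi_{\omega_{n}}\,dx\le\liminf_{n\to\infty}\int_{D}\chi_{\omega_{n}}\,dx=\liminf_{n\to\infty}|\omega_{n}|,
\end{equation*}
which is (ii). The step I expect to require the most care is the passage in the first paragraph from the abstract Hausdorff limit $K$ of the complements to a genuine open set $\omega\subset D$: one must verify that $\omega$ is open (which is precisely where $\partial D\subset K$ is used) and dispose of the degenerate cases $\omega_{n}=\emptyset$ or $\omega=\emptyset$. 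By contrast, the semicontinuity (ii) is robust, since it rests only on the single inclusion $K_{n}\subset\{x:d(x,K)\le\varepsilon\}$; it is exactly this one-sidedness of Hausdorff convergence that forces the inequality to point in the stated direction and precludes any reverse control on $|\omega_{n}|$.
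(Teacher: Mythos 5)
Your proof is correct, but there is nothing in the paper to compare it against line by line: the lemma is stated as a known result imported from the references \cite{bz,pi}, and the paper gives no proof of it at all. Your argument is the standard, self-contained one underlying those references. For (i), applying the Blaschke selection theorem to the complements $K_{n}=\bar{D}\setminus\omega_{n}$ (nonempty and compact, since $\partial D\subset K_{n}$) and then checking that $\partial D\subset K$, so that $\omega=\bar{D}\setminus K=D\setminus K$ is genuinely open and contained in $D$, is exactly the bookkeeping needed to land on the paper's Definition \ref{def:2.2} of $\omega_{n}\overset{H}{\longrightarrow}\omega$. For (ii), your key observation is the right one-sided consequence of Hausdorff convergence: $\rho(K_{n},K)\to 0$ forces each fixed $x\in\omega$ to satisfy $x\notin K_{n}$, i.e. $x\in\omega_{n}$, for all large $n$, whence $\chi_{\omega}\leq\liminf_{n}\chi_{\omega_{n}}$ pointwise, and Fatou's lemma converts this into $|\omega|\leq\liminf_{n}|\omega_{n}|$. (Equivalently, one could say that every compact subset of $\omega$ is eventually contained in $\omega_{n}$ and exhaust $\omega$ by compacts; your Fatou argument is the same in substance and a bit quicker.) Your closing remark is also well taken: only this one inclusion survives the Hausdorff limit in general, which is why the lemma asserts lower semicontinuity of the volume and no more; the reverse inequality genuinely fails (e.g. for sequences of sets riddled with small holes whose measure does not converge to that of the Hausdorff limit).
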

\begin{theorem}
$\mathcal{QS}(f,k)$ has a solution if and only if $\int_{C}fdx>k|\partial C|$.
\end{theorem}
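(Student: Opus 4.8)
The plan is to prove the two implications separately: necessity (``only if'') by a direct integration by parts combined with the monotonicity of the perimeter on convex sets, and sufficiency (``if'') by a shape-optimization argument resting on the compactness and continuity statements quoted above.

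For necessity, suppose that $\mathcal{QS}(f,k)$ admits a solution $\Omega\in\mathcal{O}_{C}$. Since $\mathrm{Supp}\,f=K\subset C\subset\Omega$, integrating the equation over $\Omega$ and applying the divergence theorem together with the two boundary conditions $u_{\Omega}=0$ and $-\partial u_{\Omega}/\partial\nu=k$ gives
\[
\int_{C}f\,dx=\int_{\Omega}f\,dx=-\int_{\Omega}\Delta u_{\Omega}\,dx=-\int_{\partial\Omega}\frac{\partial u_{\Omega}}{\partial\nu}\,ds=k\,|\partial\Omega|.
\]
Because $C$ is convex and $C\subseteq\overline{\Omega}$, the monotonicity of the perimeter for convex bodies yields $|\partial C|\le|\partial\Omega|$. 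The inequality is strict: away from $K$ the function $u_{\Omega}$ is harmonic and satisfies both an overdetermined Dirichlet and Neumann condition, so the free boundary $\partial\Omega\setminus C$ is analytic and lies strictly outside $C$, forcing $\Omega\supsetneq C$ and hence $|\partial\Omega|>|\partial C|$. Combining these, $\int_{C}f\,dx=k\,|\partial\Omega|>k\,|\partial C|$.

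For sufficiency, assume $\int_{C}f\,dx>k\,|\partial C|$ and introduce on $\mathcal{O}_{C}$ the functional
\[
J(\omega)=\frac{k^{2}}{2}\,|\omega|-\frac12\int_{\omega}|\nabla u_{\omega}|^{2}\,dx,
\]
where $u_{\omega}$ solves $P(\omega,f)$. Using $\int_{\omega}|\nabla u_{\omega}|^{2}=\int_{\omega}f\,u_{\omega}\le\int_{D}f\,u_{D}$ and $|\omega|\ge|C|$, one checks that $J$ is bounded below and penalizes large volume, so a minimizing sequence $\omega_{n}\in\mathcal{O}_{C}$ stays relevant. By the compactness theorem we extract a subsequence with $\omega_{n}\overset{H}{\to}\Omega$ and $\Omega\in\mathcal{O}_{C}$; the convergence proposition gives $u_{n}\to u_{\Omega}$ strongly in $H^{1}_{0}(D)$, hence $\int_{\omega_{n}}|\nabla u_{n}|^{2}\to\int_{\Omega}|\nabla u_{\Omega}|^{2}$, while the $L^{1}$ convergence of the characteristic functions gives $|\omega_{n}|\to|\Omega|$. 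Thus $J$ is continuous along the sequence and $\Omega$ is a minimizer.

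It remains to read off the overdetermined condition and to locate the hypothesis. The Hadamard formula for a normal perturbation $V$ supported on the free part of the boundary gives
\[
\delta J(\omega)(V)=\frac12\int_{\partial\omega}\bigl(k^{2}-|\nabla u_{\omega}|^{2}\bigr)\,(V\cdot\nu)\,ds .
\]
The same integration by parts as above rewrites the hypothesis as $\int_{\partial C}|\nabla u_{C}|\,ds=\int_{C}f\,dx>k\,|\partial C|$, so $|\nabla u_{C}|>k$ on a subset of $\partial C$ of positive measure; choosing an outward $V$ concentrated there makes $\delta J(C)(V)<0$, so the minimizer $\Omega$ strictly contains $C$ and its boundary is a genuine free boundary contained in $\{f=0\}$. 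On that boundary the vanishing of $\delta J$ for all admissible $V$ forces $|\nabla u_{\Omega}|^{2}=k^{2}$, i.e. $-\partial u_{\Omega}/\partial\nu=k$, and elliptic regularity upgrades $\partial\Omega$ to the smoothness required for $\Omega$ to solve $\mathcal{QS}(f,k)$.

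The main obstacle lies in this last paragraph: one must justify the shape-derivative formula and exploit that only one-sided (outward) perturbations are admissible inside $\mathcal{O}_{C}$, and, above all, one must rule out the free boundary touching $\partial C$. The latter is precisely what the \emph{strict} integral inequality secures, and the regularity bootstrap for the resulting free boundary is the technical heart of the argument. $\qquad\blacksquare$
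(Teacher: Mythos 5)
First, a point of comparison: the paper itself does not prove this statement --- it appears in Section 2 as a quoted preliminary (attributed to Barkatou's earlier work), so the closest in-paper benchmark is the proof of Theorem 3.1, the exact analogue for $\mathcal{P}(f,c)$, which follows the same scheme you chose: minimize a functional of the form $\text{const}\cdot|\omega|-\tfrac12\int_{\omega}(\cdots)$ over $\mathcal{O}_{C}$, get a minimizer from the compactness theorem and the continuity of $\omega\mapsto u_{\omega}$, and extract one-sided/two-sided Euler conditions from the domain derivative. Up to that point your proposal is sound (your Hadamard computation $dJ(\omega,V)=\tfrac12\int_{\partial\omega}(k^{2}-|\nabla u_{\omega}|^{2})V.\nu\,d\sigma$ is correct and matches the paper's formula 3(b)). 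The genuine gap is at precisely the step you flag as ``the technical heart'': ruling out $\partial\Omega\cap\partial C\neq\emptyset$. Showing $dJ(C,V)<0$ for some outward $V$ only proves that $\mathrm{int}(C)$ is not the minimizer; it does \emph{not} exclude a minimizer whose boundary touches $\partial C$ on a proper subset, and on that contact set the one-sided perturbations give only the inequality $|\nabla u_{\Omega}|\leq k$, hence no overdetermined condition and no solution of $\mathcal{QS}(f,k)$. The paper's strategy (in the $\mathcal{P}(f,c)$ proof) fills this hole with a second, nested optimization: minimize the analogous functional with the constant replaced by $M_{C}$, built from the hypothesis, over $\mathcal{O}_{\Omega}=\{\omega\subset\Omega,\ \omega\in\mathcal{O}_{C}\}$, derive the complementary conditions $(II)$ for the new minimizer $\Omega^{*}$, and reach a contradiction via the maximum principle in the chain $c<M_{C}=|\nabla u_{\Omega^{*}}||\nabla v_{\Omega^{*}}|\leq|\nabla u_{\Omega}||\nabla v_{\Omega}|\leq c$ on $\partial\Omega^{*}\cap\partial\Omega\cap\partial C$. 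Some comparison device of this kind is indispensable; asserting that the strict inequality ``secures'' non-contact is not a proof.

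Your necessity direction has a related weakness. Green's formula correctly gives $\int_{C}f=k|\partial\Omega|$, but the strict inequality $|\partial\Omega|>k^{-1}\int_{C}f\cdot 0+|\partial C|$, i.e. $|\partial\Omega|>|\partial C|$, requires knowing that the solution strictly contains $C$, and your justification (``the free boundary is analytic and lies strictly outside $C$'') is an assertion, not an argument: analyticity of $\partial\Omega\setminus C$ says nothing about contact with $\partial C$. Indeed, if one admits solutions whose boundary meets $\mathrm{Supp}\,f$ (take $f\equiv1$ on a closed ball $\bar B$, $\Omega=B$, $k=1/N$), Green's formula yields exactly $\int_{C}f=k|\partial C|$; the strictness therefore rests on the convention, implicit in this framework, that a solution of the free boundary problem strictly contains $C$, and you must invoke that convention (or genuinely prove non-contact) rather than derive it from regularity. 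Two smaller points: the strong $H_{0}^{1}(D)$-convergence you use (Proposition 2.1) is only established in the quoted work for $N=2,3$, and the $C^{2}$ smoothness of the minimizer needed to write the domain derivative is an assumption in the paper's propositions, not something ``elliptic regularity'' hands you for free.
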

\begin{lemma}
Let $u_{\Omega}\in C^{2,\alpha}(\bar\Omega)$ be the solution of $P(\Omega,N)$. Then $\Omega$ is an $N$-ball if and only if
  $|\nabla u_{\Omega}(x)|=\frac{1}{H_{\partial\Omega}(x)},$ for every $x\in\partial\Omega$.
\end{lemma}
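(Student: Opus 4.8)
The plan is to prove the two implications separately: the direct one by an explicit computation on the ball, and the converse by a Weinberger-type $P$-function argument. Throughout I write $u:=u_{\Omega}$ for the solution of $P(\Omega,N)$, so $-\Delta u=N$ in $\Omega$ and $u=0$ on $\partial\Omega$; by the strong maximum principle $u>0$ in $\Omega$ and by Hopf's lemma $\partial u/\partial\nu<0$ on $\partial\Omega$, so that $|\nabla u|=-\partial u/\partial\nu$ there. I normalise $H_{\partial\Omega}$ as the arithmetic mean of the principal curvatures, so that $(N-1)H_{\partial\Omega}=\operatorname{div}\nu=:\mathcal{H}$ on $\partial\Omega$, where $\mathcal{H}$ is their sum. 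For the direct implication, if $\Omega=B_{R}(x_{0})$ then uniqueness forces $u(x)=(R^{2}-|x-x_{0}|^{2})/2$; on $\partial\Omega$ one reads off $|\nabla u|=R$ and $H_{\partial\Omega}=1/R$, hence $|\nabla u|=1/H_{\partial\Omega}$.

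For the converse, the first step is to recast the overdetermined condition in terms of the second normal derivative of $u$. Decomposing the Laplacian along $\partial\Omega$,
\[
-N=\Delta u=u_{\nu\nu}+\mathcal{H}\,u_{\nu}+\Delta_{\tau}u\qquad\text{on }\partial\Omega,
\]
and since $u\equiv0$ on $\partial\Omega$ the tangential Laplacian $\Delta_{\tau}u$ vanishes there. Because $|\nabla u|=1/H_{\partial\Omega}$ is finite and $|\nabla u|>0$, we get $\mathcal{H}>0$, and the hypothesis reads $-u_{\nu}\,\mathcal{H}=N-1$, i.e. $\mathcal{H}u_{\nu}=-(N-1)$; substituting into the displayed identity yields $u_{\nu\nu}=-1$ on all of $\partial\Omega$.

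Next I introduce the $P$-function $P:=|\nabla u|^{2}+2u$. Since $\Delta u\equiv-N$ is constant, a direct computation gives $\Delta P=2|\nabla^{2}u|^{2}-2N$, and Cauchy--Schwarz $|\nabla^{2}u|^{2}\ge(\Delta u)^{2}/N=N$ shows $\Delta P\ge0$, so $P$ is subharmonic. A boundary computation using $\nabla u=u_{\nu}\nu$ on $\partial\Omega$ gives $\partial P/\partial\nu=2u_{\nu}(u_{\nu\nu}+1)$, which vanishes on $\partial\Omega$ by the previous step. If $P$ were non-constant, the strong maximum principle would place its maximum at some $x_{0}\in\partial\Omega$ and Hopf's lemma would force $\partial P/\partial\nu(x_{0})>0$, contradicting $\partial P/\partial\nu\equiv0$; hence $P$ is constant and $\Delta P\equiv0$. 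Equality in Cauchy--Schwarz then forces $\nabla^{2}u=(\Delta u/N)I=-I$ throughout $\Omega$, so $u(x)=-\tfrac12|x-x_{0}|^{2}+a$ with $a=u(x_{0})>0$; its zero set $\partial\Omega$ is the sphere $|x-x_{0}|=\sqrt{2a}$, and $\Omega$ is an $N$-ball.

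I expect the main difficulty to be bookkeeping rather than conceptual: fixing the curvature normalisation so the hypothesis becomes exactly $u_{\nu\nu}=-1$, and justifying the boundary differentiations and Hopf's lemma at the stated regularity $u\in C^{2,\alpha}(\bar\Omega)$. The latter is handled by noting that $f\equiv N$ is smooth, so interior elliptic regularity makes $u$ smooth inside $\Omega$ and $C^{2,\alpha}$ up to the boundary, which suffices for the maximum-principle argument; alternatively, one may avoid Hopf's lemma altogether by integrating and using $\int_{\Omega}\Delta P=\int_{\partial\Omega}\partial P/\partial\nu=0$ together with $\Delta P\ge0$ to conclude $\Delta P\equiv0$.
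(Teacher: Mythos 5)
Your proof is correct, and it is worth noting that the paper itself contains no proof of this lemma: it is stated and then deferred to ``Theorem 2.4'' of the reference labelled [mp], a citation which does not even resolve to an entry in the bibliography. So what you have produced is a complete, self-contained argument where the paper offers only a pointer. Both implications check out. On the ball, $u=(R^{2}-|x-x_{0}|^{2})/2$ gives $|\nabla u|=R=1/H_{\partial\Omega}$. For the converse, your three steps are each sound: (a) the splitting $\Delta u=u_{\nu\nu}+\mathcal{H}u_{\nu}+\Delta_{\tau}u$ on $\partial\Omega$, with $\Delta_{\tau}u=0$ because $u\equiv 0$ there, turns the hypothesis $\mathcal{H}u_{\nu}=-(N-1)$ into $u_{\nu\nu}=-1$; (b) $P=|\nabla u|^{2}+2u$ satisfies $\Delta P=2\bigl(|\nabla^{2}u|^{2}-N\bigr)\ge 0$ and $\partial P/\partial\nu=2u_{\nu}(u_{\nu\nu}+1)=0$ on $\partial\Omega$; (c) either Hopf's lemma or the divergence theorem forces $\Delta P\equiv 0$, and equality in Cauchy--Schwarz gives $\nabla^{2}u\equiv -I$, whence $\Omega$ is a ball (here you implicitly use that $\Omega$ is connected, which is the standing convention for a ``domain''). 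Your ``alternative'' integration argument is in fact exactly the integral identity that underlies the kind of result the paper cites: since in general $u_{\nu\nu}+1=-(N-1)(1+Hu_{\nu})$ on $\partial\Omega$, integrating $\Delta P$ over $\Omega$ yields
\[
-(N-1)\int_{\partial\Omega}u_{\nu}\,\bigl(1+H_{\partial\Omega}u_{\nu}\bigr)\,d\sigma=\int_{\Omega}\Bigl(|\nabla^{2}u|^{2}-\tfrac{(\Delta u)^{2}}{N}\Bigr)\,dx\;\ge\;0,
\]
and the hypothesis $|\nabla u|=1/H_{\partial\Omega}$ annihilates the left-hand side, forcing equality and hence $\nabla^{2}u=-I$. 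This identity-based route (in the spirit of the Magnanini--Poggesi soap-bubble identities) is the preferable one at the stated regularity: it needs only $u\in C^{2,\alpha}(\bar\Omega)$, a $C^{2}$ boundary (already implicit in $H_{\partial\Omega}$ being defined), and Green's formula justified by exhausting $\Omega$ from inside, whereas the Hopf-lemma route additionally invokes the interior ball condition --- also a consequence of $C^{2}$ regularity, so both variants are legitimate.
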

For the proof of this lemma, see Theorem 2.4 \cite{mp}.
\begin{theorem}
Let $u_{\Omega}$ (respectively $v_{\Omega}$) be the solution of $P(\Omega,1)$ (respectively $P(\Omega,u_{\Omega})$). If one of the following conditions is satisfied, then $\Omega$ is an $N$-ball.
\begin{enumerate}
  \item $|\nabla v_{\Omega}|=c\text{ on }\partial\Omega.$
  \item $|\nabla v_{\Omega}|=cx.\nu\text{ on }\partial\Omega.$
  \item $|\nabla v_{\Omega}|=c|\nabla u_{\Omega}|\text{ on }\partial\Omega.$
\end{enumerate}
\end{theorem}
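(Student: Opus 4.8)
The plan is to avoid the maximum principle entirely and argue through shape (domain) derivatives, as announced in the abstract. Writing $V_n=V\cdot\nu$ for the normal component of a deformation field $V$, the three first-variation formulas I would establish first are
\[
\frac{d}{dt}\,|\Omega_t|=\int_{\partial\Omega}V_n\,ds,\qquad
\frac{d}{dt}\int_{\Omega_t}u_{\Omega_t}\,dx=\int_{\partial\Omega}|\nabla u_\Omega|^{2}\,V_n\,ds,
\]
\[
\frac{d}{dt}\int_{\Omega_t}v_{\Omega_t}\,dx=2\int_{\partial\Omega}|\nabla u_\Omega|\,|\nabla v_\Omega|\,V_n\,ds .
\]
The first two are classical; the third is the \emph{integral identity involving the domain derivative} that drives the argument, and I would derive it from the shape-derivative system $-\Delta u'=0$, $u'=|\nabla u_\Omega|V_n$ on $\partial\Omega$, and $-\Delta v'=u'$, $v'=|\nabla v_\Omega|V_n$ on $\partial\Omega$, followed by two Green identities using $u_\Omega=v_\Omega=0$ on $\partial\Omega$.

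Next I would record the elementary consequences of integrating the two equations, $|\Omega|=\int_{\partial\Omega}|\nabla u_\Omega|\,ds$ and $\int_\Omega u_\Omega\,dx=\int_{\partial\Omega}|\nabla v_\Omega|\,ds$, and then specialise the first-variation formulas to the dilation field $V=x$. Since $u_{\lambda\Omega}(x)=\lambda^{2}u_\Omega(x/\lambda)$ and $v_{\lambda\Omega}(x)=\lambda^{4}v_\Omega(x/\lambda)$, the left-hand sides scale like $\lambda^{N+2}$ and $\lambda^{N+4}$, which yields the two Pohozaev--Rellich identities
\[
\int_{\partial\Omega}|\nabla u_\Omega|^{2}\,(x\cdot\nu)\,ds=(N+2)\int_\Omega u_\Omega\,dx,\qquad
\int_{\partial\Omega}|\nabla u_\Omega|\,|\nabla v_\Omega|\,(x\cdot\nu)\,ds=\frac{N+4}{2}\int_\Omega u_\Omega^{2}\,dx ,
\]
where I also use $\int_\Omega v_\Omega\,dx=\int_\Omega u_\Omega^{2}\,dx$. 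Each boundary condition is then fed into the appropriate identity. Condition (3), $|\nabla v_\Omega|=c|\nabla u_\Omega|$, is exactly the Euler--Lagrange (stationarity) condition for $\int_\Omega v_\Omega$ under the constraint $\int_\Omega u_\Omega=\mathrm{const}$, since the second and third formulas then become proportional; conditions (1) and (2) are inserted directly into $\int_\Omega u_\Omega=\int_{\partial\Omega}|\nabla v_\Omega|$ and into the second Pohozaev identity, producing scalar relations among $|\Omega|$, $|\partial\Omega|$, $\int_\Omega u_\Omega$ and $\int_\Omega u_\Omega^{2}$.

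The final and decisive step is rigidity: these relations, which hold with equality for the ball, must be shown to characterise it. Here I would invoke Reilly's identity for $u_\Omega$ (legitimate because $u_\Omega$ is constant on $\partial\Omega$),
\[
\int_\Omega\big[(\Delta u_\Omega)^{2}-|D^{2}u_\Omega|^{2}\big]\,dx=\int_{\partial\Omega}H_0\,|\nabla u_\Omega|^{2}\,ds,
\]
with $H_0$ the sum of the principal curvatures, together with the pointwise inequality $N|D^{2}u_\Omega|^{2}\ge(\Delta u_\Omega)^{2}$, whose equality case forces $D^{2}u_\Omega$ to be a scalar multiple of the identity, i.e. $u_\Omega$ radial and $\Omega$ a ball. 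Equivalently, I would reduce the conclusion to $|\nabla u_\Omega|=1/(N\,H_{\partial\Omega})$ on $\partial\Omega$ and apply the mean-curvature characterisation (the Lemma preceding the statement) to $w=N u_\Omega$. For condition (3) the constrained-stationarity reading suggests running a continuous-symmetrisation argument in the spirit of Brock--Henrot: along a continuous Steiner symmetrisation keeping $\int_\Omega u_\Omega$ fixed, $\int_\Omega v_\Omega$ should be strictly monotone unless the domain is already symmetric, so vanishing of the first variation in all admissible directions would force symmetry in every direction, hence a ball.

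The main obstacle is precisely this rigidity step. The scalar identities alone are necessary but never sufficient: they only fix the mean and second moment of $u_\Omega$, which a non-ball can match, so the argument must genuinely exploit the \emph{pointwise} boundary data. Transferring the condition on $|\nabla v_\Omega|$ into the vanishing of the Reilly deficit for $u_\Omega$ (or into strict monotonicity under symmetrisation) is delicate because $v_\Omega$ solves a fourth-order cascade: Reilly applied to $v_\Omega$ is \emph{not} saturated by the ball, so the information must be routed back to $u_\Omega$ through the cross term $|\nabla u_\Omega|\,|\nabla v_\Omega|$ and the Pohozaev identities. Carrying out this transfer uniformly for all three boundary data, and in particular controlling the weight $(x\cdot\nu)$ that appears under condition (2), is where the real work lies.
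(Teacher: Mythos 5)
Your setup is sound: the three first-variation formulas are correct (the third one is exactly the identity the paper records in its Preliminaries, since $\int_\Omega v_\Omega\,dx=\int_\Omega u_\Omega^2\,dx$), and the two dilation/Pohozaev--Rellich identities you derive from the scalings $u_{\lambda\Omega}(x)=\lambda^2u_\Omega(x/\lambda)$, $v_{\lambda\Omega}(x)=\lambda^4v_\Omega(x/\lambda)$ are valid. But what you have written is a plan, not a proof, and the gap is exactly where you locate it yourself: the rigidity step. Nothing in the proposal connects any of the three boundary conditions to the saturation of an inequality. You invoke Reilly's identity and the pointwise bound $N|D^2u_\Omega|^2\ge(\Delta u_\Omega)^2$, but you never show that condition (1), (2) or (3) forces equality in it, nor do you show that any of them yields $|\nabla u_\Omega|=1/(N H_{\partial\Omega})$ on $\partial\Omega$ so that the paper's Lemma could be applied; as you concede, the scalar relations you extract are satisfied by the ball but are not characteristic of it. For condition (3), the continuous-symmetrisation argument is stated in the conditional (``$\int_\Omega v_\Omega$ \emph{should} be strictly monotone unless the domain is already symmetric''), which is a conjecture about a fourth-order cascade system, not an argument; strict monotonicity of $\int_\Omega v_\Omega$ under Steiner symmetrisation with $\int_\Omega u_\Omega$ held fixed is precisely the kind of statement that needs proof. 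So the theorem's entire content --- passing from the overdetermined boundary datum to symmetry --- is left open in all three cases.

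For comparison: the paper does not prove this theorem in-house either; it quotes it from Payne and Schaefer (reference [ps]), who treat the equivalent biharmonic problem $\Delta^2v_\Omega=1$, $v_\Omega=\Delta v_\Omega=0$ on $\partial\Omega$ with each of the three extra conditions. Their method is indeed in the family you aim at --- Rellich-type integral identities, no maximum principle --- but the decisive device there is to manufacture, for each boundary condition, a specific integral inequality (via Cauchy--Schwarz) whose equality case is forced by the overdetermined datum and whose equality case in turn gives the Hessian rigidity $D^2u_\Omega\propto I$ (Weinberger's mechanism). That transfer, from boundary datum to saturated inequality, is the content you would need to supply; until it is, the proposal does not establish the statement.
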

For the proof of this theorem, see \cite{ps}.

As we use the standard tool of the domain derivative \cite{sz} to prove many of the propositions we state here, we recall its definition.\\

Suppose that the open $\omega$ is of class $C^{2}$. Consider a deformation field $V\in C^{2}(\mathbb{R}^{N};\mathbb{R}^{N})$ and set $\omega_{t}=\omega+tV(\omega)$, $t>0$. The application $Id+tV$ (a perturbation of the identity) is a Lipschitz diffeomorphism for $t$ sufficiently small and, by definition, the derivative of $J$ at $\omega$ in the direction $V$ is
$$dJ(\omega,V)=\lim_{t\rightarrow 0}\frac{J(\omega_{t})-J(\omega)}{t}.$$
As the functional $J$ depends on the domain $\omega$ through the solution of some Dirichlet problem, we need to define the domain derivative $u_{\omega}^{'}$ of $u_{\omega}$:
$$u_{\omega}^{'}=\lim_{t\rightarrow 0}\frac{u_{\omega_{t}}-u_{\omega}}{t}.$$
Furthermore, $u_{\omega}^{'}$ is the solution of the following problem:
\begin{equation}
 \left\{
\begin{array}{c}
-\Delta u'_{\omega}=0 \quad \text{in  }\omega\\
u_{\omega}^{'}=-\frac{\partial u_{\omega}}{\partial\nu}V.\nu\text{ on  }\partial \omega.
\end{array}
\right.
\end{equation}
The domain derivative $v_{\omega}^{'}$ of $v_{\omega}$ (solution of $P(\omega,u_{\omega})$) is the solution of:
\begin{equation}
 \left\{
\begin{array}{c}
-\Delta v'_{\omega}=u_{\omega}^{'}\quad \text{in  }\omega\\
u_{\omega}^{'}=-\frac{\partial u_{\omega}}{\partial\nu}V.\nu\text{ on  }\partial \omega.
\end{array}
\right.
\end{equation}
Now, to compute the derivative of the functionals we consider below, recall the following:
\begin{enumerate}
  \item The domain derivative of the volume is $$\int_{\partial\omega}V.\nu d\sigma.$$
  \item The domain derivative of the perimeter is $$\int_{\partial\omega}(N-1)H_{\omega}V.\nu d\sigma,$$
  $H_{\omega}$ being the mean curvature of $\partial\omega$.
  \item Suppose that $u_{\omega}\in H^{1}_{0}(D)$ and $\omega$ is of class $C^{2}$, then
  \begin{itemize}
    \item[(a)] If $F(\omega)=\int_{\omega}u_{\omega}^{2}dx$, then $$dF(\omega,V)=2\int_{\omega}u_{\omega}u_{\omega}^{'}dx.$$
    But $v_{\omega}\in H^{1}_{0}(D)$ and $-\Delta v_{\omega}=u_{\omega}$ in $\omega$, so by Green's formula we obtain
    $$dF(\omega,V)=2\int_{\partial\omega}|\nabla u_{\omega}||\nabla v_{\omega}|V.\nu d\sigma.$$
    \item[(b)] If $G(\omega)=\int_{\omega}|\nabla u_{\omega}|^{2}dx$, then by Hadamard's formula $$dG(\omega,V)=\int_{\partial\omega}|\nabla u_{\omega}|^{2}V.\nu d\sigma.$$
  \end{itemize}
\end{enumerate}
Since the set $\omega$ satisfies the $C$-\textsc{gnp}, we ask the
deformation set $\omega_{t}$ to satisfy the same property (for $t$
sufficiently small). The aim in the sequel is to prove that the $C$-\textsc{gnp} is stable by small deformation.\\
 $\omega$ having the
$C$-\textsc{gnp}, by Proposition 2.2, it satisfies the
$C$-\textsc{sp}. Then
\begin{equation*}
\text{for all }x\in \partial \omega \setminus C:K_{x}\cap \omega
=\emptyset.
\end{equation*}
For $t$ sufficiently small, let $\omega _{t}=\omega +tV\left(
\omega \right)$ be the deformation of $\omega $ in the direction
$V$. Let $x_{t}\in \partial \omega _{t}$. There exists $x\in
\partial \omega $ such that $x_{t}=x+tV(x)$. Using the definition
of $K_{x_{t}}$ and the equality above, we get (for $t$ small
enough and for every displacement $V$):
\begin{equation*}
\text{for all }x_{t}\in \partial \omega _{t}\setminus
C:K_{x_{t}}\cap \omega _{t}=\emptyset,
\end{equation*}
which means that $\omega _{t}$\ satisfies the $C$-\textsc{sp} (and
so the $C$-\textsc{gnp}) for every direction $V$\ when $t$ is
sufficiently small. In fact, suppose, by contradiction, there exists $x_{t}\in\partial\Omega_{t}\setminus C$ such that $K_{x_{t}}\cap\Omega_{t}\neq\emptyset$. Let $y_{t}\in K_{x_{t}}\cap\Omega_{t}$, there exists $y\in\Omega$, $y=y_{t}-tV(y)$ such that:
$$\forall c\in C,\;\;(y_{t}-x_{t}).(c-x_{t})\leq 0.$$
Show that $y\in K_{x}$:
\begin{eqnarray*}
    (y-x).(c-x)&=&(y_{t}-tV(y)-x_{t}+tV(x)).(c-x_{t}+tV(x))\\
    &=&(y_{t}-x_{t}+t(V(y)-V(x))).(c-x_{t}+tV(x))\\
    &=&(y_{t}-x_{t}).(c-x_{t})+\epsilon(t)
\end{eqnarray*}
where $\epsilon(t)=t(y_{t}-x_{t}).V(x)+t(V(y)-V(x)).(c-x_{t})+t^2(V(y)-V(x)).V(x)$ which, as $t$, tends to $0$. Obtaining the contradiction.

\section{Existence and symmetry for $\mathcal{QS}(f,k)$ and $\mathcal{P}(f,c)$}

Suppose there exists $\Omega$ a solution of $\mathcal{P}(f,c)$. Then by Cauchy-Schwarz's inequality
$$\sqrt{c}|\partial\Omega|=\int_{\partial\Omega}\sqrt{|\nabla u_{\Omega}||\nabla v_{\Omega}|}\leq (\int_{\partial\Omega}|\nabla u_{\Omega}|)^{\frac{1}{2}}(\int_{\partial\Omega}|\nabla v_{\Omega}|)^{\frac{1}{2}}.$$
And by Green's formula
$$c|\partial\Omega|^{2}\leq (\int_{\Omega}f)(\int_{\Omega}u_{\Omega}).$$
Now $C\subset\Omega$ and $C$ is convex, then $|\partial C|<|\partial\Omega|$, and so,
$$c|\partial C|^{2}<\int_{C}f\int_{\Omega}u_{\Omega}.$$

In the sequel, we will prove
\begin{theorem}
If $c|\partial C|^{2}<\int_{C}f\int_{C}u_{C}$, then there exists $\Omega\varsupsetneq C$ solution of $\mathcal{P}(f,c)$.
\end{theorem}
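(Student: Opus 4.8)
The plan is to construct the solution $\Omega$ as a minimizer of a suitable shape functional over the class $\mathcal{O}_C$, exploiting the compactness machinery developed in Section 2 together with the variational characterization of the overdetermined condition. The guiding idea comes from the domain-derivative computations recalled at the end of Section 2: if we set $J(\omega)=c|\partial\omega|^2-(\int_\omega f)(\int_\omega u_\omega)$, then the overdetermined boundary condition $|\nabla u_\Omega||\nabla v_\Omega|=c$ is precisely the Euler--Lagrange condition $dJ(\Omega,V)=0$ for all admissible $V$, once we substitute the Hadamard formulas $dF(\omega,V)=2\int_{\partial\omega}|\nabla u_\omega||\nabla v_\omega|V\cdot\nu\,d\sigma$ and the perimeter derivative. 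So the first step would be to show that $J$ (or a penalized/normalized variant of it) attains a minimum on $\mathcal{O}_C$, and that at the minimum the first-order optimality condition forces $|\nabla u_\Omega||\nabla v_\Omega|$ to be constant on $\partial\Omega$, equal to some $\mu$; the hypothesis $c|\partial C|^2<\int_C f\int_C u_C$ should guarantee that the minimizing configuration is genuinely larger than $C$ rather than degenerating to $C$ itself.

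Concretely, first I would fix the functional and verify it is bounded below on $\mathcal{O}_C$ and lower semicontinuous with respect to Hausdorff convergence. For this I would invoke Theorem~1.1 to extract from any minimizing sequence $\omega_n\in\mathcal{O}_C$ a subsequence with $\omega_n\xrightarrow{H}\omega$, $\omega\in\mathcal{O}_C$, together with Proposition~1.1 (strong $H^1_0$ convergence of $u_{\omega_n}\to u_\omega$) and Proposition~1.2 applied with $g_n=u_{\omega_n}\to u_\omega$ in $H^{-1}$ (strong $H^1_0$ convergence of $v_{\omega_n}\to v_\omega$). These give continuity of the volume-type integrals $\int_{\omega_n}f\to\int_\omega f$ and $\int_{\omega_n}u_{\omega_n}\to\int_\omega u_\omega$, while Lemma~1.1 and the perimeter term supply the needed lower semicontinuity. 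Thus $\omega$ is a minimizer of $J$ over $\mathcal{O}_C$.

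Next I would compute the shape derivative at the minimizer. Using the formulas recalled at the end of Section~2, for any admissible deformation field $V$ (chosen so that $\omega_t$ stays in $\mathcal{O}_C$, which is legitimate by the stability result proved there) we obtain
\begin{equation*}
dJ(\omega,V)=\int_{\partial\omega}\bigl(2c(N-1)H_\omega-2|\nabla u_\omega||\nabla v_\omega|\bigr)V\cdot\nu\,d\sigma .
\end{equation*}
At an interior minimizer the bracket must vanish, giving $|\nabla u_\omega||\nabla v_\omega|=c(N-1)H_\omega$ on the free part $\partial\omega\setminus C$. This is not yet the constant $c$ that $\mathcal{P}(f,c)$ demands; to repair this I would instead minimize under a volume (or perimeter) constraint, introducing a Lagrange multiplier $\mu$ so that the optimality condition reads $|\nabla u_\omega||\nabla v_\omega|=\mu$ constant on $\partial\omega\setminus C$, which is exactly the boundary condition of $\mathcal{P}(f,\mu)$; a scaling or continuity argument in $\mu$ then selects the value producing $\mathcal{P}(f,c)$.

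The hard part, and the step I expect to absorb most of the work, is the strict inclusion $\Omega\varsupsetneq C$ together with the regularity needed to justify the shape derivative. The regularity of $\partial\Omega\setminus C$ is only known to be locally Lipschitz from the $C$-\textsc{gnp}, whereas the Hadamard formulas and the boundary-value problems~(1)--(2) for $u'_\omega,v'_\omega$ presuppose a $C^2$ free boundary; bridging this gap (via a free-boundary regularity theory of Alt--Caffarelli type, as cited for $\mathcal{QS}$) is delicate. For the strict inclusion, I would argue by contradiction: if the minimizer collapsed to $\Omega=C$, then evaluating $J(C)=c|\partial C|^2-\int_C f\int_C u_C>0$ by hypothesis, whereas a small outward perturbation of $C$ can be shown to decrease $J$ below this value, so $C$ cannot be the minimizer and the optimal $\Omega$ must strictly contain $C$. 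Verifying that the perturbation genuinely lowers $J$ — and that the minimizer is an admissible domain on which the overdetermined condition holds in the classical sense — is where the analytic effort concentrates.
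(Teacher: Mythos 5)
Your overall strategy --- minimize a shape functional over $\mathcal{O}_C$, use the compactness results of Section 2 for existence, read the overdetermined condition off the shape derivative, then fight for the strict inclusion $\Omega\varsupsetneq C$ --- is indeed the paper's strategy, but your choice of functional breaks the decisive step. The paper minimizes $J(\omega)=c|\omega|-\frac{1}{2}\int_\omega u_\omega^2$, because by the formulas recalled in Section 2 (the volume derivative and item 3(a)) its shape derivative is exactly $dJ(\omega,V)=\int_{\partial\omega}\bigl(c-|\nabla u_\omega||\nabla v_\omega|\bigr)V.\nu\, d\sigma$, so minimality yields $|\nabla u_\Omega||\nabla v_\Omega|=c$ on $\partial\Omega\setminus\partial C$ and $\leq c$ on $\partial\Omega\cap\partial C$ --- the right constant, with no curvature term and no Lagrange multiplier. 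Your functional $c|\partial\omega|^2-(\int_\omega f)(\int_\omega u_\omega)$ does not have this property: the Hadamard formula you invoke, $2\int_{\partial\omega}|\nabla u_\omega||\nabla v_\omega|V.\nu\,d\sigma$, is the derivative of $\int_\omega u_\omega^{2}$, not of $\int_\omega u_\omega$ (whose derivative is $\int_{\partial\omega}|\nabla u_\omega||\nabla w_\omega|V.\nu\,d\sigma$ with $w_\omega$ the solution of $P(\omega,1)$), and you also dropped the factor $|\partial\omega|$ in differentiating $|\partial\omega|^{2}$. Consequently your Euler--Lagrange equation is not the one you wrote, it genuinely contains $H_\omega$, and the proposed repair --- a volume constraint with multiplier $\mu$ plus ``a scaling or continuity argument in $\mu$'' --- is a gap, not a proof: $f$ is a fixed datum, so no scaling is available, and nothing identifies the multiplier with $c$.

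The second genuine gap is the strict inclusion. First, there is a sign error: under the hypothesis $c|\partial C|^{2}<\int_C f\int_C u_C$ your quantity $J(C)$ is negative, not positive, so your contradiction does not even start. Second, showing that ``a small outward perturbation of $C$ decreases $J$'' is essentially as hard as the theorem itself, since it requires exactly the boundary control you do not yet have. The paper proceeds differently: having found the minimizer $\Omega$ of $J$ with conditions $(I)$, it sets $M_C=\frac{1}{|\partial C|^{2}}\int_C f\int_C u_C$ and minimizes a second functional $F(\omega)=M_C|\omega|-\frac{1}{2}\int_\omega u_\omega^{2}$ over the class $\mathcal{O}_\Omega$ of sets in $\mathcal{O}_C$ contained in $\Omega$, obtaining $\Omega^{*}$ with conditions $(II)$: equality $=M_C$ on the free part, $\geq M_C$ on $\partial\Omega^{*}\cap\partial\Omega$, $\leq M_C$ on $\partial\Omega^{*}\cap\partial C$. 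If $\partial\Omega$ touched $\partial C$, the maximum principle applied to $\Omega$ and $\Omega^{*}$ combined with $(I)$ and $(II)$ gives, on $\partial\Omega^{*}\cap\partial\Omega\cap\partial C$, the chain $c<M_C=|\nabla u_{\Omega^{*}}||\nabla v_{\Omega^{*}}|\leq|\nabla u_{\Omega}||\nabla v_{\Omega}|\leq c$, which is absurd; this is precisely where the hypothesis $c<M_C$ enters, and it forces $\partial\Omega\cap\partial C=\emptyset$, hence $\Omega\varsupsetneq C$ with the overdetermined condition holding on all of $\partial\Omega$. (Your worry about $C^{2}$ regularity of the free boundary is legitimate but secondary here: the paper also assumes it, in Propositions 3.1 and 3.2.)
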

To prove this theorem, we proceed as follows.\\
By using the domain derivative \cite{sz}, the problem $\mathcal{P}(f,c)$
seems to be the Euler equation of the following optimization
problem. Put
$$\mathcal{O}_{C}=\left\{ \omega \subset D: \omega\quad
\text{satisfies }C-\textsc{gnp}\right\}.$$
Find $\Omega\in\mathcal{O}_{C}$ such that
$$J(\Omega)=\text{Min}\{J(\omega),\;\omega\in\mathcal{O}_{C}\},$$
and
$$J(\omega)=c|\omega|-\frac{1}{2}\int_{\omega}u^{2}_{\omega}.$$
$u_{\omega}$ is the solution of $P(\omega,f)$.

\begin{proposition}
\begin{enumerate}
    \item There exists $\Omega\in\mathcal{O}_{C}$ such that
    $$J(\Omega)=\text{Min}\{J(\omega),\;\omega\in\mathcal{O}_{C}\}.$$
    \item If $\Omega$ is of class $C^2$, then
    \begin{equation*}
(I) \left\{
\begin{array}{c}
|\nabla u_{\Omega}||\nabla v_{\Omega}|\leq c \;\text{on}\;\partial\Omega\cap\partial C\\
|\nabla u_{\Omega}||\nabla v_{\Omega}|= c\;\text{on}\;\partial\Omega\setminus\partial
C.
\end{array}
\right.
\end{equation*}
\end{enumerate}
\end{proposition}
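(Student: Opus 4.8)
The plan is to establish the two parts separately. For the existence of a minimizer (part 1), I would use the direct method of the calculus of variations. First I would take a minimizing sequence $\omega_n \in \mathcal{O}_C$ for $J$. By Lemma~2.2 (the compactness result of \cite{bz,pi}), I can extract a subsequence converging in the Hausdorff sense to some open $\omega \subset D$, with $|\omega| \leq \liminf_n |\omega_n|$. By Theorem~2.1, this limit domain $\omega$ again lies in $\mathcal{O}_C$, so it is an admissible competitor. The remaining task is lower semicontinuity of $J$. Writing $J(\omega) = c|\omega| - \tfrac{1}{2}\int_\omega u_\omega^2$, the volume term is lower semicontinuous by Lemma~2.2; for the integral term I would invoke Proposition~2.1, which gives strong $H^1_0(D)$ convergence $u_n \to u_\omega$, hence $\int_{\omega_n} u_n^2 \to \int_\omega u_\omega^2$ (the convergence of the $L^2$ norms follows from the strong convergence, the zero extensions, and $f \in L^2$). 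Combining these yields $J(\omega) \leq \liminf_n J(\omega_n)$, so $\omega = \Omega$ realizes the minimum.

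For part 2, I would compute the first-order optimality conditions using the domain derivative machinery recalled at the end of Section~2. The functional has derivative
\begin{equation*}
dJ(\Omega,V) = c\int_{\partial\Omega} V.\nu\, d\sigma - \tfrac{1}{2}\, dF(\Omega,V),
\end{equation*}
where $F(\omega) = \int_\omega u_\omega^2$. By formula 3(a) in the preliminaries, $dF(\Omega,V) = 2\int_{\partial\Omega} |\nabla u_\Omega||\nabla v_\Omega|\, V.\nu\, d\sigma$, so that
\begin{equation*}
dJ(\Omega,V) = \int_{\partial\Omega}\bigl(c - |\nabla u_\Omega||\nabla v_\Omega|\bigr) V.\nu\, d\sigma.
\end{equation*}
The key structural point is that $\Omega$ is not a free minimizer over all $C^2$ domains but is constrained to the class $\mathcal{O}_C$, which forces $\mathrm{int}(C) \subset \Omega$. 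This is precisely why the optimality condition splits into the two cases of $(I)$.

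On the free part $\partial\Omega \setminus \partial C$, the boundary can be deformed in both the inward and outward directions while staying in $\mathcal{O}_C$ (the stability of the $C$-\textsc{gnp} under small deformations, established at the end of Section~2, guarantees that $\Omega_t \in \mathcal{O}_C$ for $t$ small, for arbitrary $V$). Since both signs of $V.\nu$ are admissible there, the stationarity $dJ(\Omega,V) = 0$ forces the integrand to vanish, giving $|\nabla u_\Omega||\nabla v_\Omega| = c$. On the contact part $\partial\Omega \cap \partial C$, by contrast, the constraint $\mathrm{int}(C) \subset \Omega$ only permits outward deformations, i.e. admissible $V$ must satisfy $V.\nu \geq 0$ there; the inward direction would push the boundary into $C$ and violate admissibility. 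Thus one obtains only the one-sided inequality $dJ(\Omega,V) \geq 0$ for all admissible $V$, which yields $c - |\nabla u_\Omega||\nabla v_\Omega| \geq 0$, i.e. $|\nabla u_\Omega||\nabla v_\Omega| \leq c$ on $\partial\Omega \cap \partial C$.

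The main obstacle I anticipate is making the one-sided variational argument on the contact set $\partial\Omega \cap \partial C$ rigorous. One must justify that localized outward-pointing deformation fields supported near a portion of $\partial\Omega \cap \partial C$ are genuinely admissible (keep $\Omega_t \in \mathcal{O}_C$) and, conversely, that no admissible deformation can decrease $J$ there; this requires the regularity hypothesis that $\Omega$ is of class $C^2$ together with the stability of the $C$-\textsc{gnp}. A secondary technical point is the differentiability of $J$ in the direction $V$, i.e. justifying that the domain derivatives $u'_\Omega$ and $v'_\Omega$ solving problems (1) and (2) exist and that Hadamard-type formulas apply — this again relies on the $C^2$ regularity of $\partial\Omega$ and is exactly the setting in which the recalled formulas are valid.
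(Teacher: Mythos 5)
Your proposal is correct and is essentially the paper's own (largely unwritten) argument: the paper merely states that the proof ``uses Theorem 2.1, Proposition 2.1, Proposition 2.3 and Lemma 2.1,'' i.e.\ precisely your route --- the direct method via the $\mathcal{O}_C$-compactness theorem and the continuity of $\omega \mapsto u_\omega$ for part 1, plus the Section~2 domain-derivative formulas together with the stability of the $C$-\textsc{gnp} (giving two-sided variations on $\partial\Omega\setminus\partial C$ but only outward ones on $\partial\Omega\cap\partial C$) for the optimality system $(I)$ in part 2. Two trivial remarks: the compactness result of \cite{bz,pi} is Lemma~2.1 (not 2.2) in the paper's numbering, and you could equally invoke Theorem~2.1(iii) (convergence of characteristic functions in $L^1(D)$), which yields convergence of the volumes rather than mere lower semicontinuity.
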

Now, put

$$M_C=\frac{1}{|\partial C|^{2}}\int_{C}f(x)dx\int_{C}u_{C},$$
$$F(\omega)=M_{C}|\omega|-\frac{1}{2}\int_{\omega}u^{2}_{\omega},\;\; \text{and}$$
$$\mathcal{O}_{\Omega}=\{\omega\subset\Omega,\;\omega\in\mathcal{O}_{C}\},$$

\begin{proposition}
\begin{enumerate}
    \item There exists $\Omega^*\in\mathcal{O}_{\Omega}$ such that
    $$F(\Omega)=\text{Min}\{F(\omega),\;\omega\in\mathcal{O}_{\Omega}\}.$$
    \item If $\Omega^*$ is of class $C^2$, then
    \begin{equation*}
(II) \left\{
\begin{array}{c}
|\nabla u_{\Omega^*}||\nabla v_{\Omega^*}|\leq M_C\;\text{on}\;\partial\Omega^*\cap\partial C\\
|\nabla u_{\Omega^*}||\nabla v_{\Omega^*}|\geq M_C \;\text{on}\;\partial\Omega^*\cap\partial\Omega\\
|\nabla u_{\Omega^*}||\nabla v_{\Omega^*}|=
M_C\;\text{on}\;\partial\Omega^*\setminus(\partial
C\cup\partial\Omega).
\end{array}
\right.
\end{equation*}
\end{enumerate}
\end{proposition}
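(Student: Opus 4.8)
The plan is to establish both assertions by the direct method of the calculus of variations for part (1), followed by a shape-derivative computation for part (2), mirroring the structure of the proof of Proposition 3.2 but now accounting for the two-sided constraint $\mathrm{int}(C)\subset\omega\subset\Omega$ built into the admissible class $\mathcal{O}_{\Omega}$. The new feature compared with Proposition 3.2 is that the obstacle $\Omega$ appears on the outside in addition to $C$ on the inside, which is precisely what produces the three-part decomposition of $\partial\Omega^*$ in system $(II)$.

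For the existence statement, I would start from a minimizing sequence $\omega_{n}\in\mathcal{O}_{\Omega}$ for $F$. Since each $\omega_{n}$ lies in the fixed ball $D$, the compactness lemma of \cite{bz,pi} yields a subsequence converging in the Hausdorff sense to an open set $\Omega^{*}\subset D$, and Theorem 2.1 then guarantees $\Omega^{*}\in\mathcal{O}_{C}$ together with $\chi_{\omega_{n}}\to\chi_{\Omega^{*}}$ in $L^{1}(D)$. The inclusion $\omega_{n}\subset\Omega$ passes to the limit, since the complements satisfy $\bar{D}\setminus\Omega\subset K_{n}$ for all $n$ and hence $\bar{D}\setminus\Omega\subset K=\lim K_{n}$; thus $\Omega^{*}\in\mathcal{O}_{\Omega}$. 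It then remains to show that $F$ is lower semicontinuous along this sequence. The volume term is controlled by the second conclusion of the lemma of \cite{bz,pi}, namely $|\Omega^{*}|\leq\liminf|\omega_{n}|$, while Proposition 2.1 provides strong $H_{0}^{1}(D)$ convergence $u_{\omega_{n}}\to u_{\Omega^{*}}$, whence $\int_{\omega_{n}}u_{\omega_{n}}^{2}\to\int_{\Omega^{*}}u_{\Omega^{*}}^{2}$. Combining these gives $F(\Omega^{*})\leq\liminf F(\omega_{n})=\inf_{\mathcal{O}_{\Omega}}F$, so $\Omega^{*}$ is the desired minimizer.

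For the optimality conditions, assume $\Omega^{*}$ is of class $C^{2}$ and perturb it by $\Omega^{*}_{t}=(\mathrm{Id}+tV)(\Omega^{*})$. The stability of the $C$-\textsc{gnp} under small deformations, established at the end of Section 2, keeps $\Omega^{*}_{t}\in\mathcal{O}_{C}$; to remain in $\mathcal{O}_{\Omega}$ one must additionally respect $\mathrm{int}(C)\subset\Omega^{*}_{t}\subset\Omega$, which constrains the sign of $V\cdot\nu$ on the two contact regions. Using the shape-derivative formulas recalled in the preliminaries, namely the derivative of the volume $\int_{\partial\omega}V\cdot\nu\,d\sigma$ and the identity $dF$ for $\int_{\omega}u_{\omega}^{2}$ equal to $2\int_{\partial\omega}|\nabla u_{\omega}||\nabla v_{\omega}|\,V\cdot\nu\,d\sigma$, I would obtain
$$dF(\Omega^{*},V)=\int_{\partial\Omega^{*}}\bigl(M_{C}-|\nabla u_{\Omega^{*}}||\nabla v_{\Omega^{*}}|\bigr)\,V\cdot\nu\,d\sigma.$$
Minimality forces $dF(\Omega^{*},V)\geq 0$ for every admissible $V$. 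On the free part $\partial\Omega^{*}\setminus(\partial C\cup\partial\Omega)$ both signs of $V\cdot\nu$ are allowed, so the integrand must vanish, giving the equality. On $\partial\Omega^{*}\cap\partial C$ only outward displacements ($V\cdot\nu\geq 0$) are admissible, since one cannot push the boundary into $C$, and this yields $|\nabla u_{\Omega^{*}}||\nabla v_{\Omega^{*}}|\leq M_{C}$; on $\partial\Omega^{*}\cap\partial\Omega$ only inward displacements ($V\cdot\nu\leq 0$) are admissible, producing the reverse inequality. These three cases are exactly system $(II)$.

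The main obstacle I anticipate lies in part (2): making the one-sided variational argument rigorous on the contact regions $\partial\Omega^{*}\cap\partial C$ and $\partial\Omega^{*}\cap\partial\Omega$. One must construct deformation fields that are genuinely admissible, keeping $\Omega^{*}_{t}$ inside $\mathcal{O}_{\Omega}$ for small $t>0$ while prescribing the sign of $V\cdot\nu$ on a single contact piece and leaving the remainder of $\partial\Omega^{*}$ free, and then localize to recover the pointwise inequalities from the integral sign condition. A secondary caveat is that the strong $H_{0}^{1}$ convergence of Proposition 2.1 is stated only for $N=2$ or $3$, so the lower-semicontinuity step of part (1) is fully justified only in those dimensions.
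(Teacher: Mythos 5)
Your proposal is correct and follows essentially the approach the paper intends: the paper's own ``proof'' is a single sentence citing Theorem 2.1, Proposition 2.1, Proposition 2.3 and Lemma 2.1, which is exactly the compactness-plus-continuity direct method you use for part (1) and the domain-derivative argument with one-sided admissible variations (outward only on $\partial\Omega^*\cap\partial C$, inward only on $\partial\Omega^*\cap\partial\Omega$, free elsewhere) that you use for part (2). The two caveats you flag --- constructing deformation fields that genuinely preserve membership in $\mathcal{O}_{\Omega}$ on the contact sets, and the fact that Proposition 2.1 is only established for $N=2$ or $3$ --- are limitations of the paper's own toolkit rather than defects of your argument.
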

The proof of the above propositions uses Theorem 2.1, Proposition 2.1, Proposition 2.3 and Lemma 2.1.\\
Next, we prove by contradiction that $\partial\Omega\cap\partial C=\emptyset$. The contradiction is obtained according to $(I)$,
$(II)$ and by applying the maximum principle to $\Omega$ and
$\Omega^*$.\\
Since
$\text{int}(C)\subset\Omega^*\subset\Omega$, one of the following
situations occurs.
\begin{enumerate}
    \item $\partial\Omega\equiv\partial C$
    \item $\partial\Omega\neq\partial C$ and $\partial\Omega^*\equiv\partial C$
    \item $\partial\Omega\neq\partial C$ and $\partial\Omega^*\neq\partial C$
    \item $\partial\Omega\neq\partial C$ and $\partial\Omega\equiv\partial\Omega^*$
    \item $\partial\Omega\neq\partial C$ and $\partial\Omega\neq\partial\Omega^*$
\end{enumerate}
For any of the above cases, using the maximum principle together with $(I)$ and
$(II)$, we obtain

$$c<M_C=|\nabla u_{\Omega^*}||\nabla v_{\Omega^*}|\leq |\nabla u_{\Omega}||\nabla v_{\Omega}|\leq c,\; \text{on}\; \partial\Omega^{*}\cap\partial\Omega\cap\partial C,$$
which is absurd.\\

Up to now, we will investigate several situations where an integral inequality on some domain $\Omega$ says that
\begin{enumerate}
  \item either $\Omega$ is (strictly) contained in the solution of $\mathcal{QS}(f,\mu)$ (or of $\mathcal{P}(f,\mu)$) (for some constant $\mu$),
  \item or, $\Omega$ is an $N$-ball.
\end{enumerate}

In the sequel, we suppose that $int(C)\subset\Omega\subset D$.
\begin{proposition}
Let $u_{\Omega}$ be the solution of $P(\Omega,f)$. If $$|\nabla u_{\Omega}|\geq k\;\text{ on }\partial
 \Omega,$$ then
\begin{enumerate}
  \item either, there exists $\Omega^{*}\varsupsetneq\bar\Omega$ such that $\Omega^{*}$ is solution to $\mathcal{QS}(f,k)$,
  \item or, $\int_{C}f=k|\partial\Omega|$, in that case,
  \begin{itemize}
    \item[(i)] either, there exists $\Omega^{**}\varsupsetneq C$ such that $\Omega^{**}$ is solution to $\mathcal{QS}(f,k)$
    \item[(ii)] or, $\Omega=int(C)$ and $|\nabla u_{\Omega}|=k\;\text{ on }\partial \Omega$.
  \end{itemize}
\end{enumerate}
\end{proposition}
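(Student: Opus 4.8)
The plan is to collapse the pointwise boundary hypothesis $|\nabla u_{\Omega}|\ge k$ into a single scalar inequality via an integral identity, and then to read the three conclusions off a trichotomy for that inequality. First I would integrate the equation: since $-\Delta u_{\Omega}=f$ in $\Omega$, $u_{\Omega}=0$ on $\partial\Omega$, and $u_{\Omega}\ge 0$ by the maximum principle (as $f\ge 0$), the divergence theorem gives $\int_{\Omega}f\,dx=-\int_{\partial\Omega}\frac{\partial u_{\Omega}}{\partial\nu}\,d\sigma=\int_{\partial\Omega}|\nabla u_{\Omega}|\,d\sigma$, using $|\nabla u_{\Omega}|=-\partial u_{\Omega}/\partial\nu$ on $\partial\Omega$. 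Since $\operatorname{supp}f=K\subset C$, $\text{int}(C)\subset\Omega$, and $\partial C$ is Lebesgue-null, we have $\int_{\Omega}f=\int_{C}f$, so the hypothesis produces the master inequality $\int_{C}f=\int_{\partial\Omega}|\nabla u_{\Omega}|\,d\sigma\ge k|\partial\Omega|$. I would also record the convexity fact already used in this section, namely $|\partial C|\le|\partial\Omega|$ with equality precisely when $\Omega=\text{int}(C)$.

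Next I would branch on whether the master inequality is strict. If $\int_{C}f>k|\partial\Omega|$, then a fortiori $\int_{C}f>k|\partial C|$, so Theorem 2.2 yields a solution of $\mathcal{QS}(f,k)$, which is conclusion (1). If instead $\int_{C}f=k|\partial\Omega|$, then $\int_{\partial\Omega}(|\nabla u_{\Omega}|-k)\,d\sigma=0$ with a nonnegative integrand, forcing $|\nabla u_{\Omega}|=k$ on $\partial\Omega$; thus $\Omega$ itself already solves $\mathcal{QS}(f,k)$, placing us in conclusion (2). Inside (2) I would split once more via the perimeter fact: if $\Omega\neq\text{int}(C)$ then $|\partial\Omega|>|\partial C|$, whence $\int_{C}f=k|\partial\Omega|>k|\partial C|$ and Theorem 2.2 again gives existence, which is (2)(i); and if $\Omega=\text{int}(C)$ we are exactly in (2)(ii), with $|\nabla u_{\Omega}|=k$ already established. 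These branches are exhaustive because the master inequality is of type $\ge$.

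The real difficulty is not existence, which Theorem 2.2 supplies, but upgrading it to the strict containments $\Omega^{*}\varsupsetneq\bar\Omega$ in (1) and $\Omega^{**}\varsupsetneq C$ in (2)(i). For this I would argue by a shape derivative in the spirit of Section 2: realizing a $\mathcal{QS}$ solution as a minimizer of $J(\omega)=k^{2}|\omega|-\int_{\omega}|\nabla u_{\omega}|^{2}\,dx$, whose Hadamard derivative is $dJ(\omega,V)=\int_{\partial\omega}(k^{2}-|\nabla u_{\omega}|^{2})\,V\!\cdot\!\nu\,d\sigma$, the hypothesis makes $k^{2}-|\nabla u_{\Omega}|^{2}\le 0$ with strict inequality on a set of positive measure (the boundary average of $|\nabla u_{\Omega}|$ strictly exceeds $k$ in case (1)), so $dJ(\Omega,V)<0$ for a suitable outward field $V\!\cdot\!\nu>0$. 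Hence $\Omega$ is not stationary and the minimizer strictly engulfs it. To promote this to a genuine $\mathcal{QS}$ solution whose boundary is disjoint from $\partial\Omega$, I would mirror the two-step minimization plus maximum-principle scheme already run in this section for $\mathcal{P}(f,c)$, using the compactness Theorem 2.1 and the $C$-\textsc{gnp} stability to keep the competitors admissible; equivalently one may invoke the comparison/monotonicity principle for quadrature surfaces, under which a strict subsolution is strictly contained in the solution. In case (2)(i) one may instead simply take $\Omega^{**}=\Omega$ once the inclusion $C\subset\Omega$ is checked (standard for $\mathcal{QS}$ solutions, whose domain contains the convex hull of $\operatorname{supp}f$), the strictness coming from $|\partial\Omega|>|\partial C|$. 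Verifying that the minimizer remains in $\mathcal{O}_{C}$ and engulfs $C$ is where I expect the bookkeeping, and the principal obstacle, to concentrate.
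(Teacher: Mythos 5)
Your argument coincides with the paper's own proof: Green's formula gives $\int_{\Omega}f=\int_{\partial\Omega}|\nabla u_{\Omega}|\,d\sigma\ge k|\partial\Omega|$, and the resulting trichotomy (strict inequality; equality with $\Omega\neq \mathrm{int}(C)$; equality with $\Omega=\mathrm{int}(C)$) is settled exactly as you settle it, by Theorem 2.2 together with perimeter monotonicity/rigidity for the convex $C\subset\Omega$, the equality case forcing $|\nabla u_{\Omega}|=k$ on $\partial\Omega$. The strict containments $\Omega^{*}\varsupsetneq\bar\Omega$ and $\Omega^{**}\varsupsetneq C$ that your third paragraph works to justify are simply asserted in the paper as part of its citation of Theorem 2.2, so on that point your proposal is more cautious than, but not different in substance from, the paper's proof.
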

\begin{proof}
By Green's formula,
$$\int_{\Omega}f=\int_{\partial\Omega}|\nabla u_{\Omega}|\geq k|\partial\Omega|.$$
Then
\begin{enumerate}
  \item either $\int_{\Omega}f> k|\partial\Omega|$, and so, by Theorem 2.2, there exists
  $\Omega^{*}\varsupsetneq\bar\Omega$ such that $\Omega^{*}$ is solution to $\mathcal{QS}(f,k)$.
  \item or, $\int_{\Omega}f= k|\partial\Omega|$, in that case, since $C$ is convex and $int(C)\subset \Omega$, then
  $\int_{C}f\geq k|\partial C|$, and so
  \begin{enumerate}
    \item either $\int_{C}f> k|\partial C|$, and so,
  $\Omega^{**}\varsupsetneq C$ such that $\Omega^{**}$ is solution to $\mathcal{QS}(f,k)$.
    \item or, $\int_{C}f= k|\partial C|$, and so $\Omega=int(C)$, $|\nabla u_{\Omega}|=k\;\text{ on }\partial \Omega$.
  \end{enumerate}
\end{enumerate}
\end{proof}

\begin{corollary}
Let $u_{\Omega}$ be the solution of $P(\Omega,\chi_{C})$. Suppose that $$|\nabla u_{\Omega}|\geq k\;\text{ on }\partial
 \Omega$$ Then
\begin{enumerate}
  \item either, there exists $\Omega^{*}\varsupsetneq\bar\Omega$ such that $\Omega^{*}$ is solution to $\mathcal{QS}(\chi_{C},k)$,
  \item or, $|C|=k|\partial\Omega|$, in that case
  \begin{itemize}
    \item[(i)] either, there exists $\Omega^{**}\varsupsetneq C$ such that $\Omega^{**}$ is solution to $\mathcal{QS}(\chi_{C},k)$.
    \item[(ii)] or, $\Omega$ is an $N$-ball.
  \end{itemize}
\end{enumerate}
\end{corollary}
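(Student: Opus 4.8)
The plan is to obtain the corollary as a direct specialization of the preceding proposition to $f=\chi_{C}$, and then to upgrade its degenerate conclusion by invoking Serrin's symmetry result. Since $C$ is convex, the convex hull of the support of $\chi_{C}$ is $C$ itself, so $\chi_{C}$ fits the standing assumptions on $f$ and we have $\int_{C}\chi_{C}\,dx=|C|$. Feeding this into the preceding proposition under the hypothesis $|\nabla u_{\Omega}|\geq k$ on $\partial\Omega$ produces exactly its branching: either there is $\Omega^{*}\varsupsetneq\bar\Omega$ solving $\mathcal{QS}(\chi_{C},k)$, or $|C|=k|\partial\Omega|$, and in that case either there is $\Omega^{**}\varsupsetneq C$ solving $\mathcal{QS}(\chi_{C},k)$ or $\Omega=\text{int}(C)$ with $|\nabla u_{\Omega}|=k$ on $\partial\Omega$. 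The first three alternatives are verbatim items~1 and~2(i) of the corollary, so no work is required there.

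The only genuine step is the last, degenerate case, where I must replace the conclusion ``$\Omega=\text{int}(C)$'' by ``$\Omega$ is an $N$-ball.'' Here I would use the observation that $\Omega=\text{int}(C)\subset C$, so that $\chi_{C}\equiv 1$ throughout $\Omega$. Consequently the solution $u_{\Omega}$ of $P(\Omega,\chi_{C})$ actually satisfies $-\Delta u_{\Omega}=1$ in $\Omega$ and $u_{\Omega}=0$ on $\partial\Omega$. Combining this with the overdetermined boundary datum $|\nabla u_{\Omega}|=-\frac{\partial u_{\Omega}}{\partial\nu}=k$ on $\partial\Omega$ shows that $u_{\Omega}$ is precisely a solution of the quadrature-surface problem $\mathcal{QS}(1,k)$. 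By Serrin's theorem, recalled in the Introduction, any domain admitting a $C^{2}(\bar\Omega)$ solution of $\mathcal{QS}(1,k)$ is an $N$-ball; hence $\Omega$ is an $N$-ball, which is item~2(ii).

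The main obstacle I anticipate is one of regularity rather than of ideas. Serrin's theorem requires $u_{\Omega}\in C^{2}(\bar\Omega)$, hence a sufficiently smooth boundary $\partial\Omega=\partial C$, whereas in the degenerate case $\Omega=\text{int}(C)$ the boundary is that of a convex set and is \emph{a priori} only Lipschitz. To close this gap I would either record $C^{2}$-regularity of $\partial C$ as a standing hypothesis in this case, or replace the appeal to Serrin by the integral-identity/$P$-function argument of Weinberger and of Payne--Schaefer cited in the Introduction, which reaches the same conclusion under weaker smoothness. Apart from this regularity caveat, the corollary is an immediate transcription of the preceding proposition combined with the elementary identity $\chi_{C}\equiv 1$ on $\text{int}(C)$.
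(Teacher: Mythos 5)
Your proposal is correct and follows essentially the same route as the paper's own proof: specialize the preceding proposition to $f=\chi_{C}$ to obtain items 1 and 2(i), then in the degenerate case $\Omega=\mathrm{int}(C)$ observe that $\chi_{C}\equiv 1$ on $\Omega$, so that $u_{\Omega}$ with $|\nabla u_{\Omega}|=k$ on $\partial\Omega$ solves Serrin's problem $\mathcal{QS}(1,k)$ and $\Omega$ is therefore an $N$-ball. Your closing remark on the $C^{2}(\bar\Omega)$ regularity needed to invoke Serrin's theorem is a legitimate caveat that the paper passes over in silence, but it does not alter the argument.
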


\begin{proof}
If we replace in the previous proof $f$ by $\chi_{C}$, we obtain item 1. and item 2 (i).
For item 2 (ii),  we obtain $\int_{C}f= k|\partial C|$, and $|\nabla u_{\Omega}|=k\;\text{ on }\partial \Omega$ which means that $\Omega$
is solution to Serrin's problem and so it coincides with an $N$-ball.
\end{proof}

\begin{corollary}
Let $u_{C}$ be the solution of $P(C,1)$ and let $v_{\Omega}$ be the solution of $P(\Omega,u_{C})$. Suppose that
$$|\nabla v_{\Omega}|\geq k\;\text{ on }\partial\Omega$$ Then
\begin{enumerate}
  \item either, there exists $\Omega^{*}\varsupsetneq\bar\Omega$ such that $\Omega^{*}$ is solution to $\mathcal{QS}(u_{C},k)$,
  \item or, $\int_{C}u_{C}=k|\partial\Omega|$, in that case
  \begin{itemize}
    \item[(i)] either, there exists $\Omega^{**}\varsupsetneq C$ such that $\Omega^{**}$ is solution to $\mathcal{QS}(u_{C},k)$,
    \item[(ii)] or, $\Omega$ is an $N$-ball.
  \end{itemize}
\end{enumerate}
\end{corollary}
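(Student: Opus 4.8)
The plan is to mirror the argument used for the preceding corollary, replacing the data $\chi_C$ by $u_C$ and the first--order potential $u_\Omega$ by the second--order potential $v_\Omega$, and to close the rigid case by invoking Theorem 2.4 instead of Serrin's theorem. First I would apply Green's formula to $v_\Omega$, the solution of $P(\Omega,u_C)$. Since $-\Delta v_\Omega=u_C$ in $\Omega$ and $v_\Omega=0$ on $\partial\Omega$, integration gives
$$\int_\Omega u_C\,dx=-\int_{\partial\Omega}\frac{\partial v_\Omega}{\partial\nu}\,d\sigma=\int_{\partial\Omega}|\nabla v_\Omega|\,d\sigma\geq k|\partial\Omega|.$$
Because $u_C$ is extended by zero outside $C$, its support lies in $\bar C$, so $\int_\Omega u_C=\int_C u_C$; moreover the convex hull of $\mathrm{supp}(u_C)$ is $C$ itself, which is precisely the compact convex set attached to the quadrature surface problem $\mathcal{QS}(u_C,k)$. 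Hence the effective inequality is $\int_C u_C\geq k|\partial\Omega|$.

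Next I would split according to whether this inequality is strict. If $\int_C u_C>k|\partial\Omega|$, then $C\subset\Omega$ with $C$ convex yields $|\partial C|\leq|\partial\Omega|$, so $\int_C u_C>k|\partial C|$, and Theorem 2.2 furnishes a solution $\Omega^{*}$ of $\mathcal{QS}(u_C,k)$; the strict excess together with the subsolution inequality $|\nabla v_\Omega|\geq k$ on $\partial\Omega$ forces $\Omega^{*}\varsupsetneq\bar\Omega$. If instead $\int_C u_C=k|\partial\Omega|$, then from $\int_{\partial\Omega}|\nabla v_\Omega|=k|\partial\Omega|$ and the pointwise bound $|\nabla v_\Omega|\geq k$ I deduce $|\nabla v_\Omega|=k$ on all of $\partial\Omega$, and $|\partial C|\leq|\partial\Omega|$ gives $\int_C u_C\geq k|\partial C|$. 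Thus either $\int_C u_C>k|\partial C|$, producing $\Omega^{**}\varsupsetneq C$ by Theorem 2.2, or $\int_C u_C=k|\partial C|$.

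In this last rigid case $k|\partial\Omega|=\int_C u_C=k|\partial C|$ forces $|\partial\Omega|=|\partial C|$, which together with $\text{int}(C)\subset\Omega$ and the convexity of $C$ forces $\Omega=\text{int}(C)$. Then $u_C$ is exactly the solution of $P(\Omega,1)$ and $v_\Omega$ the solution of $P(\Omega,u_C)=P(\Omega,u_\Omega)$, while $|\nabla v_\Omega|=k$ is constant on $\partial\Omega$; this is precisely hypothesis (1) of Theorem 2.4, so $\Omega$ is an $N$-ball.

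The step I expect to be the main obstacle is the strict containment $\Omega^{*}\varsupsetneq\bar\Omega$ (and likewise $\Omega^{**}\varsupsetneq C$): the bare existence statement of Theorem 2.2 does not by itself locate the solution relative to $\Omega$, so this must rest on the comparison/subsolution structure underlying the construction of $\mathcal{QS}$-solutions, namely that the solution dominates every strict subsolution, exactly as invoked in the preceding proposition and corollary. The remaining points, in particular upgrading $|\nabla v_\Omega|\geq k$ to the pointwise equality $|\nabla v_\Omega|=k$ from the integral identity, and reducing $|\partial\Omega|=|\partial C|$ with $\text{int}(C)\subset\Omega$ to $\Omega=\text{int}(C)$, are routine.
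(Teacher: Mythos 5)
Your proof is correct and follows essentially the same route as the paper: Green's formula applied to $v_\Omega$ (i.e., the proof of Proposition 3.1 rerun with $f$ replaced by $u_C$), the same two-level case split, and in the rigid case the identification $\Omega=\mathrm{int}(C)$, $u_\Omega=u_C$, $v_\Omega=v_C$ with $|\nabla v_\Omega|=k$ constant on $\partial\Omega$, concluded by the Payne--Schaefer result (which is the paper's Theorem 2.3, item 1, not ``Theorem 2.4'' as you label it). If anything the paper is terser than you are: it simply asserts the strict containments $\Omega^{*}\varsupsetneq\bar\Omega$ and $\Omega^{**}\varsupsetneq C$ on the strength of Theorem 2.2, without the subsolution/comparison justification you rightly flag as the delicate point.
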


\begin{proof}
If we replace in the previous proof $f$ by $u_{C}$, we obtain obtain item 1. and item 2 (i).
For item 2 (ii),  we obtain $\int_{C}f= k|\partial C|$, $u_{\Omega}=u_{C}$, $v_{\Omega}=v_{C}$ and $|\nabla v_{\Omega}|=k\;\text{ on }\partial \Omega$. But according to Theorem 2.3, this means that $\Omega$ is an $N$-ball.
\end{proof}

\begin{proposition}
Let $u_{\Omega}$ be the solution of $P(\Omega,f)$. Suppose that $$|\nabla u_{\Omega}|\geq k\;\text{ on }\partial \Omega$$ Then
\begin{enumerate}
  \item either, there exists $\Omega^{*}\varsupsetneq\bar\Omega$ such that $\Omega^{*}$ is solution to $\mathcal{QS}(f^{2},k^{2}\frac{|\partial\Omega|}{|\Omega|})$.
  \item or, $\Omega$ is an $N$-ball.
\end{enumerate}
\end{proposition}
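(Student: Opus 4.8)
The plan is to follow the pattern of Proposition 3.3 and reduce the statement to a single integral inequality to which Theorem 2.2 can be applied, the dichotomy arising from whether that inequality is strict or an equality. Write $\mu=k^{2}\frac{|\partial\Omega|}{|\Omega|}$, which is a genuine constant once $\Omega$ is fixed, and note that $f^{2}$ is again a positive function with the same compact support $K$, hence the same convex hull $C$, so Theorem 2.2 is available for $\mathcal{QS}(f^{2},\mu)$.

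First I would apply Green's formula to $u_{\Omega}$, the solution of $P(\Omega,f)$, to get $\int_{\Omega}f=\int_{\partial\Omega}|\nabla u_{\Omega}|$, so that the boundary hypothesis $|\nabla u_{\Omega}|\geq k$ gives $\int_{\Omega}f\geq k|\partial\Omega|$. Then Cauchy--Schwarz on $\Omega$ yields $\big(\int_{\Omega}f\big)^{2}\leq|\Omega|\int_{\Omega}f^{2}$, and combining the two produces
$$\int_{\Omega}f^{2}\;\geq\;\frac{\big(\int_{\Omega}f\big)^{2}}{|\Omega|}\;\geq\;k^{2}\frac{|\partial\Omega|^{2}}{|\Omega|}\;=\;\mu\,|\partial\Omega|.$$
Since $f$ is supported in $C$ we have $\int_{\Omega}f^{2}=\int_{C}f^{2}$, and since $C$ is convex with $C\subset\Omega$ we have $|\partial C|\leq|\partial\Omega|$; together these give $\int_{C}f^{2}\geq\mu|\partial C|$.

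The dichotomy is then on whether $\int_{\Omega}f^{2}\geq\mu|\partial\Omega|$ is strict. If $\int_{\Omega}f^{2}>\mu|\partial\Omega|$, then a fortiori $\int_{C}f^{2}>\mu|\partial C|$, so by Theorem 2.2 the problem $\mathcal{QS}(f^{2},\mu)$ admits a solution; as in Proposition 3.3, the fact that the strict integral inequality already holds over $\Omega$ itself (so that $\Omega$ is a strict subsolution) yields the stronger conclusion $\Omega^{*}\varsupsetneq\bar\Omega$, which is case 1.

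It remains to treat the equality case $\int_{\Omega}f^{2}=\mu|\partial\Omega|$, and this is where the real work lies. Here every inequality above must saturate. Equality in Cauchy--Schwarz forces $f$ to be a.e.\ constant on $\Omega$; since $f$ vanishes off its support $K\subset C$ and is positive on $K$, this is only possible if $\Omega=\mathrm{int}(C)$ up to a null set and $f=a\chi_{C}$ for some constant $a>0$. Equality in $\int_{\partial\Omega}|\nabla u_{\Omega}|=\int_{\Omega}f=k|\partial\Omega|$, together with the pointwise bound $|\nabla u_{\Omega}|\geq k$, forces $|\nabla u_{\Omega}|=k$ on all of $\partial\Omega$. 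Thus $u_{\Omega}$ solves the overdetermined problem $-\Delta u_{\Omega}=a$ in $\Omega$, $u_{\Omega}=0$ and $|\nabla u_{\Omega}|=k$ on $\partial\Omega$; after rescaling by $a$ this is precisely Serrin's problem, so by Serrin's theorem \cite{se} $\Omega$ is an $N$-ball, which is case 2. The delicate points to verify are exactly these equality deductions --- extracting $f=a\chi_{C}$ and $\Omega=\mathrm{int}(C)$ from the Cauchy--Schwarz saturation --- and the justification, via the subsolution monotonicity built into Theorem 2.2 and \cite{ba1}, that the quadrature surface solution in case 1 strictly contains $\bar\Omega$ rather than merely existing.
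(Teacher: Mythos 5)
Your proposal is correct and follows essentially the same route as the paper's own proof: Green's formula plus Cauchy--Schwarz gives $k^{2}|\partial\Omega|^{2}\leq|\Omega|\int_{\Omega}f^{2}$, the strict case is dispatched by Theorem 2.2 exactly as in the preceding proposition, and the equality case forces $f$ constant in $\Omega$ and $|\nabla u_{\Omega}|=k$ on $\partial\Omega$, so Serrin's theorem yields the $N$-ball. Your extra care in the equality case (extracting $\Omega=\mathrm{int}(C)$ and $f=a\chi_{C}$ from the Cauchy--Schwarz saturation) only makes explicit what the paper leaves implicit.
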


\begin{proof}
By Green's formula and Cauchy-Schwarz inequality,
$$k|\partial\Omega|\leq \int_{\partial\Omega}|\nabla u_{\Omega}|=\int_{\Omega}f\leq |\Omega|^{\frac{1}{2}}(\int_{\Omega}f^{2})^{\frac{1}{2}}.$$
Then $$k^{2}|\partial\Omega|^{2}\leq|\Omega|\int_{\Omega}f^{2}.$$
As in the proof of Proposition 3.1, we get item 1. For item 2., $k^{2}|\partial\Omega|^{2}=|\Omega|\int_{\Omega}f^{2}$, implies that $f$ is constant in $\Omega$, $|\nabla u_{\Omega}|=k\;\text{ on }\partial \Omega$ which means that $\Omega$ is solution to Serrin's problem and so $\Omega$ is an $N$-ball.

\end{proof}

\begin{remark}
Put $f\equiv 1$. Let $u_{\Omega}$ (respectively $v_{\Omega}$) be the solution of $P(\Omega,1)$ (respectively $P(\Omega,u_{\Omega})$). Suppose $$|\nabla v_{\Omega}|\geq k\;\text{ on }\partial \Omega.$$
Since $u_{\Omega}$ is not constant in $\Omega$, then there exists $\Omega^{*}\varsupsetneq\bar\Omega$ such that $\Omega^{*}$ is solution to $\mathcal{QS}(u_{\Omega}^{2},k^{2}\frac{|\partial\Omega|}{|\Omega|})$.
\end{remark}

\begin{remark}
When $\Omega$ satisfies the uniform interior ball property with radius $R$, one can replace in the preceding propositions the constant $k$ by $NR$.
\end{remark}

\begin{proposition}
Let $u_{\Omega}$ be the solution of $P(\Omega,f)$ and let $v_{\Omega}$ be the solution of $P(\Omega,u_{\Omega})$. Suppose that
$$|\nabla u_{\Omega}||\nabla v_{\Omega}|\geq c\text{ on }\partial \Omega$$ Then,
\begin{enumerate}
  \item either, there exists $\Omega^{*}\varsupsetneq\bar\Omega$ such that $\Omega^{*}$ is solution to $\mathcal{P}(f,c)$,
  \item or, $|\nabla u_{\Omega}||\nabla v_{\Omega}|= c\text{ on }\partial \Omega$.
\end{enumerate}
\end{proposition}
\begin{proof}
By Green's formula and Cauchy-Schwarz inequality and Green's formula
$$\sqrt{c}|\partial\Omega|=\int_{\partial\Omega}\sqrt{|\nabla u_{\Omega}||\nabla v_{\Omega}|}\leq (\int_{\partial\Omega}|\nabla u_{\Omega}|)^{\frac{1}{2}}(\int_{\partial\Omega}|\nabla v_{\Omega}|)^{\frac{1}{2}}\leq (\int_{\Omega}f)^{\frac{1}{2}}(\int_{\Omega}u_{\Omega})^{\frac{1}{2}}.$$
So
$$c|\partial\Omega|^{2}\leq (\int_{\Omega}f)(\int_{\Omega}u_{\Omega}).$$
This inequality allows us to get
\begin{enumerate}
  \item either, there exists $\Omega^{*}\varsupsetneq\bar\Omega$ such that $\Omega^{*}$ is solution to $\mathcal{P}(f,c)$,
  \item or, $|\nabla u_{\Omega}||\nabla v_{\Omega}|= c\text{ on }\partial \Omega$.
\end{enumerate}

\end{proof}
\begin{remark}
Item 1. of the previous proposition implies the existence of
 $\Omega^{1}$ (respectively $\Omega^{2}$) which (strictly) contains $\bar\Omega$ and such that $\Omega^{1}$ is solution to $\mathcal{QS}(u_{\Omega},c\frac{|\partial\Omega|}{\int_{\Omega}f})$ (respectively $\Omega^{2}$ is solution to $\mathcal{QS}(f,c\frac{|\partial\Omega|}{\int_{\Omega}u_{\Omega}})$),
 \end{remark}

\begin{proposition}
Let $u_{\Omega}$ be the solution of $P(\Omega,1)$ and let $v_{\Omega}$ be the solution of $P(\Omega,u_{\Omega})$. Suppose that $$|\nabla u_{\Omega}||\nabla v_{\Omega}|\geq c\text{ on }\partial \Omega$$ Then
\begin{enumerate}
  \item either, there exists $\Omega^{*}\varsupsetneq\bar\Omega$ such that $\Omega^{*}$ is solution to $\mathcal{QS}(u_{\Omega},c\frac{|\partial\Omega|}{|\Omega|})$,
  \item or, $\Omega$ is an $N$-ball.
  \end{enumerate}
\end{proposition}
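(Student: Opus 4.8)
The plan is to mirror the proof of Proposition 3.2 specialised to $f\equiv 1$, and then to sharpen the equality alternative by invoking Serrin's theorem. First I would record the basic integral inequality. Since $-\Delta u_{\Omega}=1$ and $-\Delta v_{\Omega}=u_{\Omega}$ in $\Omega$, Green's formula gives $\int_{\partial\Omega}|\nabla u_{\Omega}|=\int_{\Omega}f=|\Omega|$ and $\int_{\partial\Omega}|\nabla v_{\Omega}|=\int_{\Omega}u_{\Omega}$. Using the hypothesis $|\nabla u_{\Omega}||\nabla v_{\Omega}|\geq c$ pointwise and then Cauchy--Schwarz exactly as in Proposition 3.2, I would obtain
$$\sqrt{c}\,|\partial\Omega|\leq\int_{\partial\Omega}\sqrt{|\nabla u_{\Omega}||\nabla v_{\Omega}|}\leq\Big(\int_{\partial\Omega}|\nabla u_{\Omega}|\Big)^{1/2}\Big(\int_{\partial\Omega}|\nabla v_{\Omega}|\Big)^{1/2}=|\Omega|^{1/2}\Big(\int_{\Omega}u_{\Omega}\Big)^{1/2},$$
hence $c|\partial\Omega|^{2}\leq|\Omega|\int_{\Omega}u_{\Omega}$, that is $\int_{\Omega}u_{\Omega}\geq c\frac{|\partial\Omega|}{|\Omega|}|\partial\Omega|$.

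The dichotomy then comes from whether this last inequality is strict. If $\int_{\Omega}u_{\Omega}>c\frac{|\partial\Omega|}{|\Omega|}|\partial\Omega|$, I would apply Theorem 2.2 to the datum $u_{\Omega}$ (whose support is $\bar\Omega$) with $k=c\frac{|\partial\Omega|}{|\Omega|}$: the convex hull $\widehat{\Omega}$ of $\bar\Omega$ satisfies $\int_{\widehat\Omega}u_{\Omega}=\int_{\Omega}u_{\Omega}$ and $|\partial\widehat\Omega|\leq|\partial\Omega|$ (taking a convex hull does not increase perimeter), so the strict inequality $\int_{\widehat\Omega}u_{\Omega}>k|\partial\widehat\Omega|$ holds and $\mathcal{QS}(u_{\Omega},c\frac{|\partial\Omega|}{|\Omega|})$ has a solution $\Omega^{*}\varsupsetneq\bar\Omega$. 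This is item 1, and it is exactly the content of the Remark following Proposition 3.2 read with $f\equiv 1$.

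It remains to treat the equality case $c|\partial\Omega|^{2}=|\Omega|\int_{\Omega}u_{\Omega}$, where every inequality in the chain above must be an equality. Equality in the pointwise estimate forces $|\nabla u_{\Omega}||\nabla v_{\Omega}|=c$ on $\partial\Omega$, while equality in Cauchy--Schwarz forces $|\nabla v_{\Omega}|$ and $|\nabla u_{\Omega}|$ to be proportional on $\partial\Omega$, say $|\nabla v_{\Omega}|=\lambda|\nabla u_{\Omega}|$ with $\lambda>0$ (here $|\nabla u_{\Omega}|>0$ on $\partial\Omega$ by Hopf's lemma, since $f\equiv 1>0$). Combining the two gives $\lambda|\nabla u_{\Omega}|^{2}=c$, so $|\nabla u_{\Omega}|$ is a positive constant on $\partial\Omega$. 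Thus $u_{\Omega}$ solves $-\Delta u_{\Omega}=1$ in $\Omega$, $u_{\Omega}=0$ and $-\partial u_{\Omega}/\partial\nu=|\nabla u_{\Omega}|=\mathrm{cst}$ on $\partial\Omega$; in other words $\Omega$ is a solution of Serrin's problem $\mathcal{QS}(1,k)$, and Serrin's theorem yields that $\Omega$ is an $N$-ball, which is item 2.

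I expect the main obstacle to be the clean extraction of the equality information: one must check that equality in the full chain propagates simultaneously to the pointwise bound \emph{and} to the Cauchy--Schwarz step, and that the resulting proportionality together with $|\nabla u_{\Omega}||\nabla v_{\Omega}|=c$ really forces $|\nabla u_{\Omega}|$ itself to be constant (rather than merely the product $|\nabla u_{\Omega}||\nabla v_{\Omega}|$). Once this is secured, item 2 reduces to Serrin's symmetry result for $\mathcal{QS}(1,k)$, and item 1 is the verbatim specialisation of Proposition 3.2 to $f\equiv 1$.
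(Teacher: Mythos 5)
Your proof is correct and follows essentially the same route as the paper: the paper's own proof simply specializes the preceding proposition (Green's formula plus Cauchy--Schwarz giving $c|\partial\Omega|^{2}\leq|\Omega|\int_{\Omega}u_{\Omega}$, with Theorem 2.2 behind the existence of $\Omega^{*}$ in the strict case), exactly as you do. The only difference is cosmetic: in the equality case the paper cites Theorem 2.3, while you derive the proportionality $|\nabla v_{\Omega}|=\lambda|\nabla u_{\Omega}|$ from equality in Cauchy--Schwarz and reduce to Serrin's theorem --- an observation the paper itself records as a remark inside its proof (and which is in fact needed to apply Theorem 2.3 as stated), so it is the same argument.
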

\begin{proof}
When $f\equiv 1$, according to the previous proposition,
 \begin{enumerate}
  \item either, there exists $\Omega^{*}\varsupsetneq\bar\Omega$ such that $\Omega^{*}$ is solution to $\mathcal{QS}(u_{\Omega},c\frac{|\partial\Omega|}{|\Omega|})$,
  \item or, $|\nabla u_{\Omega}||\nabla v_{\Omega}|= c\text{ on }\partial \Omega$, and so $\Omega$ is an $N$-ball according to Theorem 2.3.
\end{enumerate}
Remark that, we also get $|\nabla v_{\Omega}|=\lambda|\nabla u_{\Omega}|\text{ on }\partial\Omega$  (where $\lambda$ is constant). This implies that, in particular, that $\Omega$ is solution to Serrin's problem with $\sqrt{\lambda c}$ as constant.
\end{proof}

\begin{remark}
When $\Omega$ satisfies the uniform interior ball property with radius $R$, one can replace in the preceding propositions the constant $c$ by $\frac{N^{2}}{N+2}R^{4}$.
\end{remark}

\begin{proposition}
Let $u_{\Omega}$ be the solution of $P(\Omega,1)$ and let $v_{\Omega}$ be the solution of $P(\Omega,u_{\Omega})$. Suppose that
$$|\nabla v_{\Omega}|\geq k|\nabla u_{\Omega}|\text{ on }\partial \Omega$$ Then
\begin{enumerate}
  \item either, there exists $\Omega^{*}\varsupsetneq\bar\Omega$ such that $\Omega^{*}$ is solution to $\mathcal{QS}(u_{\Omega},k\frac{|\Omega|}{|\partial\Omega|})$,
  \item or, $\Omega$ is an $N$-ball.
\end{enumerate}
\end{proposition}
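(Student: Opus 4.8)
The plan is to follow the same two-step scheme used in Propositions 3.2 and 3.3: first convert the pointwise boundary hypothesis into a single scalar inequality between integrals by means of Green's formula, and then split into a strict case (yielding existence) and an equality case (yielding an $N$-ball).

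First I would record the two integral identities coming from Green's formula. Since $-\Delta u_{\Omega}=1$ and $u_{\Omega}=0$ on $\partial\Omega$, integration gives $\int_{\partial\Omega}|\nabla u_{\Omega}|\,d\sigma=\int_{\Omega}(-\Delta u_{\Omega})\,dx=|\Omega|$; likewise, since $-\Delta v_{\Omega}=u_{\Omega}$ and $v_{\Omega}=0$ on $\partial\Omega$, one gets $\int_{\partial\Omega}|\nabla v_{\Omega}|\,d\sigma=\int_{\Omega}u_{\Omega}\,dx$ (using $-\frac{\partial u_{\Omega}}{\partial\nu}=|\nabla u_{\Omega}|$ and $-\frac{\partial v_{\Omega}}{\partial\nu}=|\nabla v_{\Omega}|$). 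Integrating the hypothesis $|\nabla v_{\Omega}|\geq k|\nabla u_{\Omega}|$ over $\partial\Omega$ then yields $\int_{\Omega}u_{\Omega}\geq k|\Omega|$, which I rewrite as $\int_{\Omega}u_{\Omega}\geq \mu|\partial\Omega|$ with $\mu:=k\frac{|\Omega|}{|\partial\Omega|}$, exactly the constant appearing in the statement.

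Next I would dichotomize. If the inequality is strict, $\int_{\Omega}u_{\Omega}>\mu|\partial\Omega|$, then, since $u_{\Omega}$ (extended by zero) is a nonnegative function supported in $\bar\Omega$ whose convex hull $C'$ satisfies $|\partial C'|\leq|\partial\Omega|$ and $\int_{C'}u_{\Omega}=\int_{\Omega}u_{\Omega}$, the existence criterion $\int_{C'}u_{\Omega}>\mu|\partial C'|$ of Theorem 2.2 is met, so $\mathcal{QS}(u_{\Omega},\mu)$ has a solution; arguing exactly as in Proposition 3.1, the strict inequality forces this solution $\Omega^{*}$ to contain $\bar\Omega$ strictly, giving alternative (1). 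If instead equality holds, then $\int_{\partial\Omega}\bigl(|\nabla v_{\Omega}|-k|\nabla u_{\Omega}|\bigr)\,d\sigma=0$ with a nonnegative, continuous integrand, so $|\nabla v_{\Omega}|=k|\nabla u_{\Omega}|$ on all of $\partial\Omega$. This is precisely condition (3) of Theorem 2.3 with $c=k$, whence $\Omega$ is an $N$-ball, giving alternative (2).

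The routine parts are the two Green identities and the passage from equal integrals to pointwise equality (which needs only the sign of the integrand and the regularity of the solutions). The step deserving the most care is the strict-containment claim $\Omega^{*}\varsupsetneq\bar\Omega$ in the first alternative: Theorem 2.2 as stated only guarantees existence through the convex-hull condition, so I must invoke the monotonicity built into the construction of the quadrature-surface solution (as already used in Proposition 3.1) in order to upgrade mere existence to the strict inclusion $\Omega^{*}\varsupsetneq\bar\Omega$ relative to the base domain $\Omega$.
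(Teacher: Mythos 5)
Your proposal follows exactly the paper's own argument: Green's formula converts the boundary hypothesis into $k\frac{|\Omega|}{|\partial\Omega|}|\partial\Omega|\leq\int_{\Omega}u_{\Omega}$, the strict case yields a solution of $\mathcal{QS}\bigl(u_{\Omega},k\frac{|\Omega|}{|\partial\Omega|}\bigr)$ strictly containing $\bar\Omega$ via Theorem 2.2 (as in Proposition 3.1), and the equality case gives $|\nabla v_{\Omega}|=k|\nabla u_{\Omega}|$ pointwise on $\partial\Omega$, whence $\Omega$ is an $N$-ball by Theorem 2.3(3). Your extra remarks (the convex-hull bookkeeping when applying Theorem 2.2 to $f=u_{\Omega}$, and the care needed for the strict inclusion $\Omega^{*}\varsupsetneq\bar\Omega$) are sound refinements of steps the paper leaves implicit, not a departure from its method.
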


\begin{proof}
By Green's formula,
$$k\frac{|\Omega|}{|\partial\Omega|}|\partial\Omega|=k\int_{\partial\Omega}|\nabla u_{\Omega}|\leq\int_{\partial\Omega}|\nabla v_{\Omega}|=\int_{\Omega}u_{\Omega}.$$
Then
\begin{enumerate}
  \item either, there exists $\Omega^{*}\varsupsetneq\bar\Omega$ such that $\Omega^{*}$ is solution to $\mathcal{QS}(u_{\Omega},k\frac{|\Omega|}{|\partial\Omega|})$,
  \item or, $|\nabla v_{\Omega}|=k|\nabla u_{\Omega}|\text{ on }\partial \Omega$ and so $\Omega$ is an $N$-ball.
\end{enumerate}

\end{proof}

\begin{remark}
When $\Omega$ satisfies the uniform interior ball property with radius $R$, one can replace in the preceding propositions the constant $k$ by $\frac{N}{N+2}R^{3}$.
\end{remark}
\begin{proposition}
Let $u_{\Omega}$ be the solution of $P(\Omega,1)$ and let $v_{\Omega}$ be the solution of $P(\Omega,u_{\Omega})$. Suppose that
$$|\nabla v_{\Omega}|\geq kx.\nu\text{ on }\partial \Omega.$$ Suppose that $\Omega$ is starshaped w.r.t. the origin, then
\begin{enumerate}
  \item either, there exists $\Omega^{*}\varsupsetneq\bar\Omega$ such that $\Omega^{*}$ is solution to $\mathcal{QS}(u_{\Omega},kN\frac{|\Omega|}{|\partial\Omega|})$,
  \item or, $\Omega$ is an $N$-ball.
\end{enumerate}
\end{proposition}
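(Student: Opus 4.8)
The plan is to follow the same scheme used in Propositions 3.1 and 3.4: integrate the pointwise boundary inequality, convert both members into domain integrals by Green's and the divergence theorems, and then read off the dichotomy from Theorem 2.2 (existence for $\mathcal{QS}$) and Theorem 2.3 (the symmetry result). First I would integrate the hypothesis $|\nabla v_{\Omega}|\geq kx.\nu$ over $\partial\Omega$, writing
$$\int_{\partial\Omega}|\nabla v_{\Omega}|\,d\sigma \geq k\int_{\partial\Omega} x.\nu\,d\sigma.$$
The starshapedness of $\Omega$ with respect to the origin guarantees $x.\nu\geq 0$ on $\partial\Omega$, so the right-hand member is a genuine nonnegative lower bound and $kx.\nu$ plays the role of admissible Neumann data; this is also the natural setting in which the second item of Theorem 2.3 applies.

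Next I would evaluate the two sides. On the left, since $-\Delta v_{\Omega}=u_{\Omega}$ in $\Omega$ and $v_{\Omega}=0$ on $\partial\Omega$ (so that $|\nabla v_{\Omega}|=-\partial v_{\Omega}/\partial\nu$), Green's formula gives $\int_{\partial\Omega}|\nabla v_{\Omega}|\,d\sigma=\int_{\Omega}u_{\Omega}\,dx$. On the right, the divergence theorem yields $\int_{\partial\Omega} x.\nu\,d\sigma=\int_{\Omega}\operatorname{div}(x)\,dx=N|\Omega|$. Combining the two, I obtain the master inequality
$$\int_{\Omega}u_{\Omega}\,dx \geq kN|\Omega| = kN\frac{|\Omega|}{|\partial\Omega|}\,|\partial\Omega|,$$
which is exactly of the type treated in Proposition 3.1, now with source $u_{\Omega}$ and constant $\mu=kN\frac{|\Omega|}{|\partial\Omega|}$.

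From here the dichotomy is immediate. If the inequality is strict, then because $u_{\Omega}$ is supported in $\bar\Omega$ and the convex hull of its support has perimeter at most $|\partial\Omega|$, Theorem 2.2 produces a solution $\Omega^{*}\varsupsetneq\bar\Omega$ of $\mathcal{QS}(u_{\Omega},kN\frac{|\Omega|}{|\partial\Omega|})$, which is alternative 1. If instead equality holds, then $\int_{\partial\Omega}(|\nabla v_{\Omega}|-kx.\nu)\,d\sigma=0$ with a nonnegative integrand, forcing $|\nabla v_{\Omega}|=kx.\nu$ pointwise on $\partial\Omega$; the second item of Theorem 2.3 then gives that $\Omega$ is an $N$-ball, which is alternative 2. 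The only genuinely delicate point is the passage from the strict integral inequality to a $\mathcal{QS}$ solution strictly containing $\bar\Omega$: this is precisely where Theorem 2.2 together with the perimeter comparison $|\partial\operatorname{conv}(\bar\Omega)|\leq|\partial\Omega|$ must be invoked, exactly as in the proofs of Propositions 3.1 and 3.4. The remaining work reduces to the routine Green and divergence identities displayed above.
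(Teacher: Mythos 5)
Your proof is correct and follows essentially the same route as the paper's: integrate the boundary hypothesis, use the divergence theorem ($\int_{\partial\Omega}x.\nu\,d\sigma=N|\Omega|$) and Green's formula ($\int_{\partial\Omega}|\nabla v_{\Omega}|\,d\sigma=\int_{\Omega}u_{\Omega}\,dx$) to reach $\int_{\Omega}u_{\Omega}\,dx\geq kN\frac{|\Omega|}{|\partial\Omega|}\,|\partial\Omega|$, then conclude via Theorem 2.2 in the strict case and, in the equality case, via pointwise equality $|\nabla v_{\Omega}|=kx.\nu$ together with item 2 of Theorem 2.3. Your additional remark on the perimeter comparison with the convex hull of the support of $u_{\Omega}$ is a detail the paper leaves implicit, but it does not alter the argument.
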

\begin{proof}
Since $\Omega$ is starshaped w.r.t. the origin,
$$kN\frac{|\Omega|}{|\partial\Omega|}|\partial\Omega|=k\int_{\partial\Omega}x.\nu\leq\int_{\partial\Omega}|\nabla v_{\Omega}|=\int_{\Omega}u_{\Omega}.$$
Then
\begin{enumerate}
  \item either, there exists $\Omega^{*}\varsupsetneq\bar\Omega$ such that $\Omega^{*}$ is solution to $\mathcal{QS}(u_{\Omega},kN\frac{|\Omega|}{|\partial\Omega|})$,
  \item or, $|\nabla v_{\Omega}|=kx.\nu\text{ on }\partial \Omega$ and so $\Omega$ is an $N$-ball.
\end{enumerate}
According to Theorem 2.3, $\Omega$ is an $N$-ball.
\end{proof}
\section{Symmetry results for some overdetermined problems}
In this section, we consider new overdetermined boundary value problems. We use essentially the domain derivative to get the symmetry result.
\begin{proposition}
Let $\Omega\subset D$, and let $u_{\Omega}$ and $v_{\Omega}$ be the solutions of the Dirichlet problems $P(\Omega,1)$ and $P(\Omega,u_{\Omega})$. Suppose that
\begin{equation*}
(OC)\left\{
\begin{array}{c}
|\nabla v_{\Omega}|=\frac{N}{N+2}|\nabla u_{\Omega}|^{3} \text{ on }\partial\Omega,\;\text{and} \\
\int_{\partial\Omega}|\nabla u_{\Omega}|^{2}\frac{\partial u_{\Omega}^{'}}{\partial\nu}d\sigma=0.
\end{array}
\right.
\end{equation*}
Then $\Omega$ is an $N$-ball.
\end{proposition}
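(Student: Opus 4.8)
The plan is to read the pair of conditions in $(OC)$ through the shape calculus recalled above and to reduce the statement to the rigidity underlying Weinberger's $P$-function. Since $u_{\Omega}$ solves $P(\Omega,1)$, the Bochner identity gives $\Delta(|\nabla u_{\Omega}|^{2})=2|\nabla^{2}u_{\Omega}|^{2}$ (the Hessian squared), and the Cauchy--Schwarz inequality applied to the eigenvalues of $\nabla^{2}u_{\Omega}$ yields the pointwise bound $|\nabla^{2}u_{\Omega}|^{2}\geq\frac1N(\Delta u_{\Omega})^{2}=\frac1N$, with equality at a point if and only if $\nabla^{2}u_{\Omega}=-\frac1N\mathrm{Id}$ there. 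Hence $P=|\nabla u_{\Omega}|^{2}+\frac2N u_{\Omega}$ is subharmonic, and $\Delta P\equiv 0$ forces $u_{\Omega}(x)=(R^{2}-|x-x_{0}|^{2})/2N$, i.e. $\Omega=B_{R}(x_{0})$. The role of $(OC)$ is to manufacture, out of the two prescribed data, an integral identity that is exactly the equality case of this inequality.

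First I would fix the scaling field $V(x)=x$, for which the harmonic domain derivative of $u_{\Omega}$ is explicit, $u_{\Omega}'=2u_{\Omega}-x\cdot\nabla u_{\Omega}$, with boundary trace $u_{\Omega}'=|\nabla u_{\Omega}|\,(x\cdot\nu)$, so that $u_{\Omega}'\geq 0$ in $\Omega$ once $\Omega$ is star-shaped (then $x\cdot\nu\geq 0$ and the maximum principle applies). Writing $g=|\nabla u_{\Omega}|^{2}$ and applying Green's identity to the harmonic $u_{\Omega}'$ and to $g$, the second line of $(OC)$ becomes $\int_{\partial\Omega}g\,\partial_{\nu}u_{\Omega}'=\int_{\partial\Omega}u_{\Omega}'\,\partial_{\nu}g-\int_{\Omega}u_{\Omega}'\,\Delta g=0$, i.e. $\int_{\Omega}u_{\Omega}'\,\Delta g=\int_{\partial\Omega}u_{\Omega}'\,\partial_{\nu}(|\nabla u_{\Omega}|^{2})$. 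Using $u_{\Omega}=0$ on $\partial\Omega$ one has $\partial_{\nu}(|\nabla u_{\Omega}|^{2})=2|\nabla u_{\Omega}|\big(1-(N-1)H_{\partial\Omega}|\nabla u_{\Omega}|\big)$, and subtracting $\frac2N\int_{\Omega}u_{\Omega}'=\frac2N\int_{\partial\Omega}|\nabla u_{\Omega}|^{2}(x\cdot\nu)$ (the identity $\int_{\Omega}u_{\Omega}'=\int_{\partial\Omega}|\nabla u_{\Omega}|^{2}V\cdot\nu$ follows from the two expressions for $dG$) turns this into
\[
\int_{\Omega}u_{\Omega}'\,\Delta P=2(N-1)\int_{\partial\Omega}(x\cdot\nu)\,|\nabla u_{\Omega}|^{2}\Big(\tfrac1N-H_{\partial\Omega}|\nabla u_{\Omega}|\Big)\,d\sigma .
\]
The integrand factor $\frac1N-H_{\partial\Omega}|\nabla u_{\Omega}|$ is precisely the boundary defect of Lemma 2.1.

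It then remains to show that the right-hand boundary term vanishes, and this is where the first line of $(OC)$ enters. Since $u_{\Omega}=v_{\Omega}=0$ and $\Delta v_{\Omega}=-u_{\Omega}=0$ on $\partial\Omega$, decomposing the Laplacian in the boundary frame gives $\partial_{\nu\nu}v_{\Omega}=(N-1)H_{\partial\Omega}|\nabla v_{\Omega}|$; combined with $|\nabla v_{\Omega}|=\frac{N}{N+2}|\nabla u_{\Omega}|^{3}$ this gives $(N-1)H_{\partial\Omega}|\nabla u_{\Omega}|^{3}=\frac{N+2}{N}\partial_{\nu\nu}v_{\Omega}$. Substituting, and using the Rellich--Pohozaev identity $\int_{\partial\Omega}(x\cdot\nu)|\nabla u_{\Omega}|^{2}=(N+2)\int_{\Omega}u_{\Omega}$ (obtained by comparing $\int_{\Omega}u_{\Omega}'=(N+2)\int_{\Omega}u_{\Omega}$ with $\int_{\Omega}u_{\Omega}'=\int_{\partial\Omega}|\nabla u_{\Omega}|^{2}(x\cdot\nu)$ for the scaling field), the boundary term reduces to a multiple of $(N-1)\int_{\Omega}u_{\Omega}-\int_{\partial\Omega}(x\cdot\nu)\partial_{\nu\nu}v_{\Omega}$. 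Once this is seen to vanish, one gets $\int_{\Omega}u_{\Omega}'\,\Delta P=0$; as $u_{\Omega}'\geq 0$ and $\Delta P\geq 0$, the strong maximum principle gives $\Delta P\equiv 0$, hence $\nabla^{2}u_{\Omega}\equiv-\frac1N\mathrm{Id}$ and $\Omega$ is an $N$-ball.

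The delicate step, which I expect to be the real obstacle, is exactly the vanishing of that boundary term, namely $\int_{\partial\Omega}(x\cdot\nu)\partial_{\nu\nu}v_{\Omega}=(N-1)\int_{\Omega}u_{\Omega}$: this is not a free identity but must be extracted by coupling condition (a) with a Rellich-type identity for $v_{\Omega}$, and it is precisely this coupling of $P(\Omega,1)$ and $P(\Omega,u_{\Omega})$ that makes the constant $\frac{N}{N+2}$ indispensable (a direct check confirms both sides agree on a ball, which is strong evidence the reduction is correct). A secondary technical point is the sign of the weight $u_{\Omega}'$: the concluding argument requires $u_{\Omega}'\geq 0$, which is transparent for the scaling field on a star-shaped domain but for a general admissible $\Omega$ must be secured either by restricting to deformation fields with $V\cdot\nu\geq 0$ (so $u_{\Omega}'\geq 0$ by the maximum principle) or by an a priori star-shapedness reduction.
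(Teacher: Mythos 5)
Your route is genuinely different from the paper's, but it contains a gap that you yourself flag, and that gap is not a technicality: it carries the entire content of the theorem. Your reduction is correct up to the identity
\[
\int_{\Omega}u_{\Omega}'\,\Delta P\,dx=\frac{2(N-1)(N+2)}{N}\int_{\partial\Omega}\bigl(1-H_{\partial\Omega}\,(x\cdot\nu)\bigr)\,|\nabla v_{\Omega}|\,d\sigma,
\]
which follows from your computation by inserting condition (a) in the form $H_{\partial\Omega}|\nabla u_{\Omega}|^{3}=\tfrac{N+2}{N}H_{\partial\Omega}|\nabla v_{\Omega}|$, the Pohozaev identity $\int_{\partial\Omega}(x\cdot\nu)|\nabla u_{\Omega}|^{2}\,d\sigma=(N+2)\int_{\Omega}u_{\Omega}\,dx$, and Green's formula $\int_{\Omega}u_{\Omega}\,dx=\int_{\partial\Omega}|\nabla v_{\Omega}|\,d\sigma$. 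So what you must prove is the weighted Minkowski-type identity $\int_{\partial\Omega}\bigl(H_{\partial\Omega}(x\cdot\nu)-1\bigr)|\nabla v_{\Omega}|\,d\sigma=0$. The classical Minkowski formula gives exactly this with weight $1$; the weight $|\nabla v_{\Omega}|$ destroys the proof of that formula, and the weighted statement is essentially equivalent to the rigidity ($H_{\partial\Omega}(x\cdot\nu)\equiv 1$) you are trying to establish. Proving it is therefore as hard as the proposition itself, and your proposal offers no mechanism for it; the check that both sides agree on a ball is vacuous evidence, since every step of any candidate proof of this proposition is consistent on a ball. There is a second, independent defect: your concluding argument needs $u_{\Omega}'\geq 0$, i.e. $x\cdot\nu\geq 0$ on $\partial\Omega$, but star-shapedness is not among the hypotheses, and your proposed repair (deformation fields with $V\cdot\nu\geq 0$) is incompatible with the rest of the argument, because the explicit formula $u_{\Omega}'=2u_{\Omega}-x\cdot\nabla u_{\Omega}$ and the Pohozaev identity you rely on are specific to the scaling field $V(x)=x$.

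For comparison, the paper's proof never needs a pointwise rigidity identity of this kind. It observes that condition (a), integrated over $\partial\Omega$ and combined with Green's formula twice ($\int_{\partial\Omega}|\nabla v_{\Omega}|=\int_{\Omega}u_{\Omega}=\int_{\Omega}|\nabla u_{\Omega}|^{2}$), yields $J(\Omega)=0$ for the Choulli--Henrot functional $J(\omega)=N\int_{\partial\omega}|\nabla u_{\omega}|^{3}\,d\sigma-(N+2)\int_{\omega}|\nabla u_{\omega}|^{2}\,dx$, which is nonnegative on all admissible domains \cite{hc}; hence $\Omega$ is a global minimizer and $dJ(\Omega,V)=0$ for every field $V$. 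In the explicit formula for $dJ$, condition (b) --- read as holding for every $V$, which is how the paper uses it, whereas you use it only for $V(x)=x$ --- removes exactly the term $3N\int_{\partial\Omega}|\nabla u_{\Omega}|^{2}\partial_{\nu}u_{\Omega}'\,d\sigma$, leaving $\int_{\partial\Omega}\bigl(|\nabla u_{\Omega}|^{2}-NH_{\Omega}|\nabla u_{\Omega}|^{3}\bigr)V\cdot\nu\,d\sigma=0$ for all $V$. This gives the pointwise identity $|\nabla u_{\Omega}|=1/(NH_{\Omega})$ on $\partial\Omega$, and the curvature characterization of balls (Lemma 2.1) concludes. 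If you wish to salvage your $P$-function approach, the missing weighted Minkowski identity is where all the work lies.
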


Before proving this proposition, we check that a ball $B_{R}$ is a solution to problem $\mathcal{P}(1,c)$ which satisfies $(OC)$.\\
Let $u_{R}$ be the solution of $P(B_{R},1)$. Using polar
coordinates, $u_{R}$ verifies
\begin{eqnarray*}
-u''_R-\frac{N-1}{r}u'_{R}
&=&1\quad\text{for \ }r\in \left] 0,R\right[ , \\
u_R(R) &=&0.
\end{eqnarray*}
By the first equation, $\left( r^{N-1}u'_{R}\right)'=-r^{N-1}$.
Since $u_{R}(R)=0$, we get
\begin{equation*}
r^{N-1}u'_R\left( r\right) =R^{N-1}u'_{R}\left( R\right)
+\int_{r}^{R}s^{N-1}ds.
\end{equation*}
As $r\rightarrow 0$, $r^{N-1}u'_{R}\left( r\right) \rightarrow 0$
(otherwise we get a distributional contribution to $\Delta u_{R}$
at the origin). Thus
\begin{equation*}
-u'_R\left(R\right)
=\frac{1}{R^{N-1}}\int_{0}^{R}s^{N-1}ds=\frac{R}{N}.
\end{equation*}

Now let $v_{R}$) be the solution of $P(B_{R},u_{R})$). On one hand, we have $ -u_{R}^{\prime
}(R)=\frac{R}{N}.$ Then a simple calculation shows that
$$u_{R}(r)=\frac{1}{2N}\left( R^{2}-r^{2}\right)\quad\text{for \ }r\in \left] 0,R\right[.$$
On the other hand, the radial function $v_{R}$
satisfies
\begin{equation*}
\left\{
\begin{array}{c}
-v_{R}^{^{\prime \prime }}-\frac{N-1}{r}v_{R}^{^{\prime
}}=u_{R}\quad\text{for }r\in
\left] 0,R\right[ , \\
v_{R}(R)=0 \\
-v_{R}^{^{\prime }}\left( R\right)
=\frac{1}{R^{N-1}}\int_{0}^{R}s^{N-1}u_{R}(s)ds=
\frac{N}{N+2}\left( \frac{R}{N}\right) ^{3}.
\end{array}
\right.
\end{equation*}
Therefore $B_{R}$ is a solution of $\mathcal{P}(1,c)$ and so
$$|\nabla v_{R}|=\frac{N}{N+2}\left( \frac{R}{N}\right) ^{3}=\frac{N}{N+2}|\nabla u_{R}| ^{3},\;\text{and}$$
$$\int_{\partial\Omega}|\nabla u_{\omega}|^{2}\frac{\partial u_{\Omega}^{'}}{\partial\nu}d\sigma=(\frac{R}{N})^{2}\int_{\partial\Omega}\frac{\partial u_{\Omega}^{'}}{\partial\nu}d\sigma=0.$$

\begin{proof}
Let $\mathcal{O}$ be the set of all bounded open connected domains of class $C^{2}$ in $\mathbb{R}^N$. Consider the following functional introduced in \cite{hc}:
$$J(\omega)=N\int_{\partial\omega}|\nabla u_{\omega}|^{3}d\sigma-(N+2)\int_{\omega}|\nabla u_{\omega}|^{2}dx.$$
$u_{\omega}$ being the solution of $P(\omega,1)$. As in \cite{hc}, $J(\omega)\geq 0$ for any $\omega\in \mathcal{O}$. Now, by Green's formula $$\frac{N}{N+2}\int_{\partial\Omega}|\nabla u_{\Omega}|^{3}=\int_{\partial\Omega}|\nabla v_{\Omega}|=\int_{\Omega}u_{\Omega}=\int_{\Omega}|\nabla u_{\Omega}|^{2},$$
then $J(\Omega)=0.$\\
The domain derivative of the functional $J$ at $\omega\in \mathcal{O}$ in the direction $V$ is given by
$$\displaystyle dJ(\omega,V)=\int_{\partial\omega}([2(N-1)|\nabla u_{\omega}|^{2}-2N(N-1)H_{\omega}|\nabla u_{\omega}|^{3}]V.\nu-3N|\nabla u_{\omega}|^{2}\frac{\partial u_{\omega}^{'}}{\partial\nu})d\sigma.$$
$H_{\omega}$ is the mean curvature of $\partial\omega$ and $u'_{\omega}$ is the domain derivative of $u_{\omega}$.
Since $\Omega$ minimizes the functional $J$, then for every vector field $V\in C^{2}(\mathbb{R}^{N},\mathbb{R}^{N})$ we have $dJ(\Omega,V)=0$. Now, according to $(OC)$, we obtain $$|\nabla u_{\Omega}|=\frac{1}{NH_{\Omega}}\text{ on }\partial\Omega.$$
This means that $\Omega$ is an $N$-ball, according to Theorem 2.2.
\end{proof}
\begin{remark}
Proposition 4.1  corresponds to the following biharmonic problem
\begin{equation*}
\left\{
\begin{array}{c}
\Delta^{2} v_{\Omega}=1 \quad \text{ in  }\Omega\,,
 v_{\Omega}=\Delta v_{\Omega}=0\quad (N+2)|\nabla v_{\Omega}|=N|\nabla u_{\Omega}|^{3}
 \text{ on  }\partial \Omega\,,\\
\int_{\partial\Omega}|\nabla u_{\Omega}|^{2}\frac{\partial u'_{\Omega}}{\partial\nu}=0\text{ on  }\partial\Omega.
\end{array}
\right.
\end{equation*}
\end{remark}
The work bellow is motivated by the following result obtained by Didenko and Emamizadeh \cite{de}
\begin{theorem}
The domain $\Omega$ is a ball if and only if there exists a constant $c$ that the following integral equation is valid
$$\int_{\Omega}u'_{\Omega}dx=c\int_{\partial\Omega}u'_{\Omega}d\sigma,$$
for every vector field $V\in C^{2}(\mathbb{R}^{N},\mathbb{R}^{N})$ where $$\triangle u'_{\Omega}=0\text{ in }\Omega\quad\quad
u'_{\Omega}=-\frac{\partial u_{\Omega}}{\partial\nu}V.\nu$$
\end{theorem}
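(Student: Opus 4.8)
The plan is to reduce both implications to a single Green-type identity, valid on any sufficiently smooth domain and for any deformation $V$, and then to recognize the resulting overdetermined condition as Serrin's problem. First I would establish the identity
\[
\int_{\Omega}u'_{\Omega}\,dx=\int_{\partial\Omega}|\nabla u_{\Omega}|\,u'_{\Omega}\,d\sigma .
\]
To obtain it, write $1=-\Delta u_{\Omega}$ and integrate by parts twice. Since $u'_{\Omega}$ is harmonic in $\Omega$ and $u_{\Omega}$ vanishes on $\partial\Omega$, the interior term $\int_{\Omega}\nabla u'_{\Omega}\cdot\nabla u_{\Omega}\,dx$ reduces to $-\int_{\Omega}u_{\Omega}\,\Delta u'_{\Omega}\,dx=0$, so only the boundary term $-\int_{\partial\Omega}u'_{\Omega}\,\partial u_{\Omega}/\partial\nu\,d\sigma$ survives; using $\partial u_{\Omega}/\partial\nu=-|\nabla u_{\Omega}|$ on $\partial\Omega$ gives the stated identity. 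This identity, which is independent of the particular shape of $\Omega$, is the engine of the whole argument.

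For the direction where $\Omega$ is a ball, I would use that on $\partial B_{R}$ one has $|\nabla u_{R}|=R/N$, a constant already computed in the excerpt. Pulling this constant out of the boundary integral in the identity yields
\[
\int_{B_{R}}u'_{R}\,dx=\frac{R}{N}\int_{\partial B_{R}}u'_{R}\,d\sigma ,
\]
so the integral equation holds for every $V$ with the single constant $c=R/N$.

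For the converse I would combine the identity with the hypothesis $\int_{\Omega}u'_{\Omega}\,dx=c\int_{\partial\Omega}u'_{\Omega}\,d\sigma$ to obtain
\[
\int_{\partial\Omega}\bigl(|\nabla u_{\Omega}|-c\bigr)\,u'_{\Omega}\,d\sigma=0\qquad\text{for every }V .
\]
Substituting the boundary value $u'_{\Omega}=|\nabla u_{\Omega}|\,V\!\cdot\!\nu$ and setting $g=V\!\cdot\!\nu$ turns this into $\int_{\partial\Omega}|\nabla u_{\Omega}|\bigl(|\nabla u_{\Omega}|-c\bigr)g\,d\sigma=0$. Since $g$ ranges over normal traces of arbitrary $C^{2}$ vector fields, I would argue that $V\mapsto V\!\cdot\!\nu|_{\partial\Omega}$ exhausts a rich enough class of test functions to localise, forcing the bracketed factor to vanish pointwise; as $|\nabla u_{\Omega}|>0$ on $\partial\Omega$ by Hopf's lemma, this gives $|\nabla u_{\Omega}|\equiv c$ on $\partial\Omega$. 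Then $u_{\Omega}$ solves the overdetermined Serrin problem $-\Delta u_{\Omega}=1$ in $\Omega$, $u_{\Omega}=0$ and $|\nabla u_{\Omega}|=c$ on $\partial\Omega$, whence $\Omega$ is an $N$-ball.

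The main obstacle is the passage from ``the weighted integral vanishes for all $V$'' to the pointwise relation on $\partial\Omega$: I must ensure that the normal traces $V\!\cdot\!\nu$ genuinely span a class of test functions dense enough to conclude, via a standard extension and cutoff construction on the $C^{2}$ boundary, and that $|\nabla u_{\Omega}|$ does not vanish at the points where it is divided out. Once the Green identity is secured and this localisation step is justified, everything else is routine.
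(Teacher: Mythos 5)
Your proof is correct and takes essentially the same route as the paper's: although the paper quotes this theorem from Didenko--Emamizadeh without reproving it, the Green-formula identity you derive, $\int_{\Omega}u'_{\Omega}\,dx=\int_{\partial\Omega}|\nabla u_{\Omega}|^{2}\,V\cdot\nu\,d\sigma$ (equivalently $\int_{\partial\Omega}|\nabla u_{\Omega}|\,u'_{\Omega}\,d\sigma$), is exactly the computation the paper performs in proving its Proposition 4.2, and your reduction to the pointwise condition $|\nabla u_{\Omega}|\equiv c$ on $\partial\Omega$ followed by Serrin's theorem is the intended mechanism. The only point needing care, the density of the normal traces $V\cdot\nu$ used to localise and the positivity of $|\nabla u_{\Omega}|$ via Hopf's lemma, is standard and you flag it correctly.
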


Let $u_{\Omega}$, $v_{\Omega}$, and $w_{\Omega}$ be respectively the solution of $P(\Omega,1)$, $P(\Omega,u_{\Omega})$, $P(\Omega,v_{\Omega})$.

\begin{proposition}
$\Omega$ is an $N$-ball if one of the following conditions is satisfied
\begin{itemize}
     \item[(i)] $-\frac{\partial v'_{\Omega}}{\partial\nu}=c^{2}V.\nu\text{ on  }\partial \Omega$
     \item[(ii)] $-\frac{\partial v'_{\Omega}}{\partial\nu}=c|\nabla u_{\Omega}|V.\nu\text{ on  }\partial \Omega$
     \item[(iii)] $-\frac{\partial v'_{\Omega}}{\partial\nu}=c^{2}|\nabla u_{\Omega}|^{2}V.\nu\text{ on  }\partial \Omega$
\end{itemize}

\end{proposition}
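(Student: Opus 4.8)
The plan is to reduce each of the three boundary prescriptions, by integration over $\partial\Omega$, either to the integral identity of Theorem 4.1 (Didenko--Emamizadeh) or to Serrin's overdetermined condition $|\nabla u_\Omega|\equiv\text{const}$ (hence to Theorem 2.2), exploiting the two Dirichlet problems that link $u_\Omega$, $v_\Omega$ and their domain derivatives. First I would record three ``master'' identities. Since $-\Delta v'_\Omega=u'_\Omega$ in $\Omega$, the divergence theorem gives
\begin{equation*}
\int_\Omega u'_\Omega\,dx=-\int_\Omega\Delta v'_\Omega\,dx=\int_{\partial\Omega}\Big(-\frac{\partial v'_\Omega}{\partial\nu}\Big)\,d\sigma .
\end{equation*}
Next, applying Green's formula to the pair $(u_\Omega,u'_\Omega)$ and using $-\Delta u_\Omega=1$, $\Delta u'_\Omega=0$, $u_\Omega=0$ on $\partial\Omega$ together with the boundary value $u'_\Omega=-\frac{\partial u_\Omega}{\partial\nu}V.\nu=|\nabla u_\Omega|\,V.\nu$, one finds
\begin{equation*}
\int_\Omega u'_\Omega\,dx=\int_{\partial\Omega}|\nabla u_\Omega|^2\,V.\nu\,d\sigma,\qquad\text{while}\qquad\int_{\partial\Omega}u'_\Omega\,d\sigma=\int_{\partial\Omega}|\nabla u_\Omega|\,V.\nu\,d\sigma .
\end{equation*}
These are the whole engine: they convert any prescription of $-\frac{\partial v'_\Omega}{\partial\nu}$ on $\partial\Omega$ into a linear relation, valid for every admissible field $V$, among $\int_{\partial\Omega}|\nabla u_\Omega|^2 V.\nu$, $\int_{\partial\Omega}|\nabla u_\Omega| V.\nu$ and $\int_{\partial\Omega}V.\nu$.

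For case (i) I would integrate the hypothesis over $\partial\Omega$ and combine it with the first two master identities to obtain $\int_{\partial\Omega}|\nabla u_\Omega|^2 V.\nu\,d\sigma=c^2\int_{\partial\Omega}V.\nu\,d\sigma$ for every $V$; since $V.\nu$ is essentially arbitrary on $\partial\Omega$, this forces $|\nabla u_\Omega|^2\equiv c^2$, i.e. $|\nabla u_\Omega|$ constant on $\partial\Omega$. As $-\Delta u_\Omega=1$, the domain $\Omega$ then solves Serrin's problem, so Theorem 2.2 (equivalently Lemma 2.1) gives that $\Omega$ is an $N$-ball. For case (ii) the same integration, now using the third master identity, yields directly
\begin{equation*}
\int_\Omega u'_\Omega\,dx=c\int_{\partial\Omega}|\nabla u_\Omega|\,V.\nu\,d\sigma=c\int_{\partial\Omega}u'_\Omega\,d\sigma
\end{equation*}
for every $V$, which is exactly the integral identity of Theorem 4.1; hence $\Omega$ is a ball. (Alternatively one may close case (ii) as in case (i), since the displayed relation already forces $|\nabla u_\Omega|\equiv c$ pointwise.)

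Case (iii) is where I expect the real difficulty. Integrating the hypothesis against the constant weight $1$ and using the first two master identities yields only
\begin{equation*}
\int_\Omega u'_\Omega\,dx=c^2\int_{\partial\Omega}|\nabla u_\Omega|^2\,V.\nu\,d\sigma=c^2\int_\Omega u'_\Omega\,dx,
\end{equation*}
so the constant-weight test collapses to $c^2=1$ and carries no geometric information about $\Omega$. To break this degeneracy one must test the pointwise identity $-\frac{\partial v'_\Omega}{\partial\nu}=c^2|\nabla u_\Omega|^2 V.\nu$ against a nonconstant weight, so that the left-hand side is no longer reducible to $\int_\Omega u'_\Omega$ by the divergence theorem alone. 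The natural device is the third Dirichlet problem $P(\Omega,v_\Omega)$ introduced just before the statement: running a Green computation with $w_\Omega$ in place of $u_\Omega$ (whose one-level-up analogue of the first master identity is $\int_\Omega v'_\Omega\,dx=\int_{\partial\Omega}(-\frac{\partial w'_\Omega}{\partial\nu})\,d\sigma$) should produce a second, independent linear relation in $V$, and the two relations together ought to force $|\nabla u_\Omega|$ constant and thus $\Omega$ to be a ball by Theorem 2.2. Making this pairing yield a genuinely new identity — rather than a multiple of the degenerate one above — is the key technical point and the step I would check most carefully, since the three prescriptions are mutually very close and it is easy to land back on $\int_\Omega u'_\Omega=c^2\int_\Omega u'_\Omega$.
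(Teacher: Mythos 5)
Your cases (i) and (ii) are correct and are essentially the paper's own proof. The paper derives exactly your master identity
\begin{equation*}
\int_{\partial\Omega}\Big(-\frac{\partial v'_{\Omega}}{\partial\nu}\Big)\,d\sigma
=\int_{\Omega}u'_{\Omega}\,dx
=\int_{\partial\Omega}|\nabla u_{\Omega}|^{2}\,V.\nu\,d\sigma ,
\end{equation*}
valid for every field $V$, and then equates densities to get $|\nabla u_{\Omega}|=c$ on $\partial\Omega$ in both cases, concluding by Serrin's theorem. Your detour through the Didenko--Emamizadeh identity in case (ii) is a harmless variant; your parenthetical pointwise closing is literally what the paper records. (One cosmetic slip: the Serrin conclusion is not the paper's Theorem 2.2, which is the quadrature-surface existence criterion; the curvature characterization is Lemma 2.2 and the relevant biharmonic symmetry result is Theorem 2.3.)

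Case (iii) is where there is a genuine gap, but not the one you plan to fill. Your diagnosis is right: with the hypothesis as printed, the master identity collapses to $c^{2}\int_{\Omega}u'_{\Omega}=\int_{\Omega}u'_{\Omega}$, i.e.\ to $c^{2}=1$, and no geometric information survives. The resolution, however, is not a second pairing through $w_{\Omega}$: it is that the printed hypothesis contains a typo. The paper's own proof of (iii) records the conclusion $|\nabla v_{\Omega}|=\frac{1}{c}|\nabla u_{\Omega}|$ on $\partial\Omega$ and then concludes that $\Omega$ is a ball by Theorem 2.3 (item 3, the Payne--Schaefer condition), not by Serrin; that conclusion follows from the master identity precisely when the hypothesis reads $-\frac{\partial v'_{\Omega}}{\partial\nu}=c^{2}|\nabla v_{\Omega}|^{2}V.\nu$, i.e.\ with $|\nabla v_{\Omega}|^{2}$ in place of $|\nabla u_{\Omega}|^{2}$. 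Under that reading the one master identity disposes of all three cases and nothing else is needed. Your proposed repair cannot work as described, for a structural reason: the hypothesis prescribes $\frac{\partial v'_{\Omega}}{\partial\nu}$, whereas the $w$-level identity $\int_{\Omega}v'_{\Omega}\,dx=\int_{\partial\Omega}\big(-\frac{\partial w'_{\Omega}}{\partial\nu}\big)\,d\sigma$ never involves that quantity, so it cannot see the hypothesis at all; and any Green pairing of $v'_{\Omega}$ that does involve $\frac{\partial v'_{\Omega}}{\partial\nu}$ must use a test function with nonzero boundary trace, the constant being the only choice that does not introduce new unknown boundary terms (for a nonconstant harmonic weight $h$ you pick up $\int_{\partial\Omega}|\nabla v_{\Omega}|\,\frac{\partial h}{\partial\nu}\,V.\nu\,d\sigma$ and $\int_{\Omega}h\,u'_{\Omega}\,dx$, which are not controlled). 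Finally, note that your expected endpoint for (iii) is also off: the corrected hypothesis does not force $|\nabla u_{\Omega}|$ to be constant; it forces the proportionality $|\nabla v_{\Omega}|=\frac{1}{c}|\nabla u_{\Omega}|$ on $\partial\Omega$, which is precisely the overdetermined condition handled by Theorem 2.3, not Serrin's condition.
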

\begin{proof}

By Green's formula
\begin{eqnarray*}
   \int_{\partial\Omega}-\frac{\partial v'_{\Omega}}{\partial\nu}=\int_{\Omega}u'_{\Omega} &=&\int_{\Omega}-\Delta u_{\Omega}u'_{\Omega}\\
    &=&\int_{\Omega}-\Delta u'_{\Omega}u_{\Omega}+\int_{\partial\Omega}u_{\Omega}\frac{\partial u'_{\Omega}}{\partial\nu}-\int_{\partial\Omega}u'_{\Omega}\frac{\partial u_{\Omega}}{\partial\nu}\\
    &=&\int_{\partial\Omega}|\nabla u_{\Omega}|^{2}V.\nu.
\end{eqnarray*}
Then, we get
\begin{itemize}
     \item[(i)] $|\nabla u_{\Omega}|=c\text{ on  }\partial \Omega$
     \item[(ii)] $|\nabla u_{\Omega}|=c\text{ on  }\partial \Omega$
     \item[(iii)] $|\nabla v_{\Omega}|=\frac{1}{c}|\nabla u_{\Omega}|\text{ on  }\partial \Omega$
\end{itemize}
 Each of the above items tells us that $\Omega$ is an $N$-ball.
\end{proof}
\begin{remark}
Proposition 4.2 corresponds to the following biharmonic problem
\begin{equation*}
\left\{
\begin{array}{c}
\Delta^{2} v'_{\Omega}=0 \quad \text{ in  }\Omega\,,\\
 v'_{\Omega}=|\nabla v_{\Omega}|V.\nu,\quad \Delta v'_{\Omega}=|\nabla u_{\Omega}|V.\nu
 \text{ on  }\partial \Omega\,,\\
-\frac{\partial w'_{\Omega}}{\partial\nu}=\Lambda\text{ on  }\partial \Omega.

\end{array}
\right.
\end{equation*}
where $\Lambda=c^{2}V.\nu$, $\Lambda=c|\nabla u_{\Omega}|V.\nu$, or $\Lambda=c^{2}|\nabla u_{\Omega}|^{2}V.\nu$.
\end{remark}
\begin{proposition}
$\Omega$ is an $N$-ball if one of the following conditions is satisfied:
\begin{itemize}
     \item[(i)] $-\frac{\partial w'_{\Omega}}{\partial\nu}=cV.\nu\text{ on  }\partial \Omega$
     \item[(ii)] $-\frac{\partial w'_{\Omega}}{\partial\nu}=c|\nabla u_{\Omega}|V.\nu\text{ on  }\partial \Omega$
     \item[(iii)] $-\frac{\partial w'_{\Omega}}{\partial\nu}=c|\nabla u_{\Omega}|^{2}V.\nu\text{ on  }\partial \Omega$
     \item[(iv)] $-\frac{\partial w'_{\Omega}}{\partial\nu}=c|\nabla v_{\Omega}|V.\nu\text{ on  }\partial \Omega$
   \end{itemize}
 \end{proposition}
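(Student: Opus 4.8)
The plan is to follow the template of Proposition 4.2: reduce all four hypotheses to a single boundary integral identity for $\int_{\partial\Omega}-\frac{\partial w'_{\Omega}}{\partial\nu}\,d\sigma$, and then use the arbitrariness of the deformation field $V$ to turn each integral condition into a pointwise overdetermined condition that is already known to force a ball.

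First I would establish the master identity. Because $w'_{\Omega}$ solves $-\Delta w'_{\Omega}=v'_{\Omega}$ in $\Omega$, the divergence theorem gives $\int_{\partial\Omega}-\frac{\partial w'_{\Omega}}{\partial\nu}\,d\sigma=\int_{\Omega}v'_{\Omega}\,dx$, so everything hinges on evaluating $\int_{\Omega}v'_{\Omega}$. I would compute this by Green's formula applied to the pair $v'_{\Omega}$ and $u_{\Omega}$ (recall $-\Delta u_{\Omega}=1$ and $-\Delta v'_{\Omega}=u'_{\Omega}$): since $u_{\Omega}=0$ on $\partial\Omega$, two integrations by parts leave only the boundary contribution of $v'_{\Omega}\frac{\partial u_{\Omega}}{\partial\nu}$, which upon using $v'_{\Omega}=|\nabla v_{\Omega}|V.\nu$ and $\frac{\partial u_{\Omega}}{\partial\nu}=-|\nabla u_{\Omega}|$ yields
\[
\int_{\Omega}v'_{\Omega}\,dx=\int_{\Omega}u_{\Omega}u'_{\Omega}\,dx+\int_{\partial\Omega}|\nabla u_{\Omega}||\nabla v_{\Omega}|\,V.\nu\,d\sigma .
\]
Invoking the recalled Hadamard-type formula $\int_{\Omega}u_{\Omega}u'_{\Omega}\,dx=\int_{\partial\Omega}|\nabla u_{\Omega}||\nabla v_{\Omega}|\,V.\nu\,d\sigma$ (item 3(a) of the preliminaries) then produces the master identity
\[
\int_{\partial\Omega}-\frac{\partial w'_{\Omega}}{\partial\nu}\,d\sigma=2\int_{\partial\Omega}|\nabla u_{\Omega}||\nabla v_{\Omega}|\,V.\nu\,d\sigma .
\]
As an independent check of the constant $2$, I would note that $\int_{\Omega}v'_{\Omega}$ is the domain derivative of $\omega\mapsto\int_{\omega}v_{\omega}$ (the boundary term vanishes because $v_{\omega}=0$ on $\partial\omega$), that $\int_{\omega}v_{\omega}=\int_{\omega}u_{\omega}^{2}$ by a one-line Green computation, and that the latter differentiates to $2\int_{\partial\omega}|\nabla u_{\omega}||\nabla v_{\omega}|V.\nu$.

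With the master identity in place, each hypothesis prescribes the left-hand side. Substituting, say, condition (i) gives $\int_{\partial\Omega}\bigl(c-2|\nabla u_{\Omega}||\nabla v_{\Omega}|\bigr)V.\nu\,d\sigma=0$ for every $V\in C^{2}(\mathbb{R}^{N},\mathbb{R}^{N})$; since the normal trace $V.\nu$ sweeps out a dense family of continuous functions on $\partial\Omega$ and the bracket is continuous, this forces $|\nabla u_{\Omega}||\nabla v_{\Omega}|=c/2$ on $\partial\Omega$. Treating (ii), (iii), (iv) the same way and then dividing by $|\nabla u_{\Omega}|$, $|\nabla u_{\Omega}|$, and $|\nabla v_{\Omega}|$ respectively (each strictly positive on $\partial\Omega$ by Hopf's lemma, since $u_{\Omega},v_{\Omega}>0$ in $\Omega$) produces $|\nabla v_{\Omega}|=c/2$, $|\nabla v_{\Omega}|=\frac{c}{2}|\nabla u_{\Omega}|$, and $|\nabla u_{\Omega}|=c/2$ on $\partial\Omega$.

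Each resulting pointwise condition is a known rigidity statement. The identity $|\nabla v_{\Omega}|=\text{const}$ is item 1 of Theorem 2.3 and $|\nabla v_{\Omega}|=\text{const}\cdot|\nabla u_{\Omega}|$ is item 3, both giving a ball; $|\nabla u_{\Omega}|=\text{const}$ is Serrin's overdetermined problem for $P(\Omega,1)$, hence again a ball; and $|\nabla u_{\Omega}||\nabla v_{\Omega}|=\text{const}$ is the overdetermined condition of $\mathcal{P}(1,c)$, which forces a ball exactly as in the second alternative of Proposition 3.6. I expect the only delicate points to be the sign bookkeeping in the two integrations by parts that pins down the factor $2$ (best cross-checked on the ball via $\int_{\Omega}v_{\Omega}=\int_{\Omega}u_{\Omega}^{2}$), and the standard justification that arbitrary $V$ legitimately removes the boundary integral; everything else is substitution and citation.
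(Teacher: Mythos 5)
Your proof is correct and follows essentially the same route as the paper: Green's formula with $-\Delta u_{\Omega}=1$ and $-\Delta v'_{\Omega}=u'_{\Omega}$, the Hadamard-type identity $\int_{\Omega}u_{\Omega}u'_{\Omega}\,dx=\int_{\partial\Omega}|\nabla u_{\Omega}||\nabla v_{\Omega}|\,V.\nu\,d\sigma$ to obtain the master identity $\int_{\partial\Omega}-\frac{\partial w'_{\Omega}}{\partial\nu}\,d\sigma=2\int_{\partial\Omega}|\nabla u_{\Omega}||\nabla v_{\Omega}|\,V.\nu\,d\sigma$, then arbitrariness of $V$ and the same four rigidity results. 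Your added checks (the cross-verification of the factor $2$ via $\int_{\Omega}v_{\Omega}=\int_{\Omega}u_{\Omega}^{2}$, the density of the traces $V.\nu$, and Hopf's lemma for the divisions) merely make explicit what the paper leaves implicit.
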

 \begin{proof}
By Green's formula
\begin{eqnarray*}
   \int_{\partial\Omega}-\frac{\partial w'_{\Omega}}{\partial\nu}=\int_{\Omega}v'_{\Omega} &=&\int_{\Omega}-\Delta u_{\Omega}u'_{\Omega}\\
    &=&\int_{\Omega}-\Delta v'_{\Omega}u_{\Omega}+\int_{\partial\Omega}u_{\Omega}\frac{\partial v'_{\Omega}}{\partial\nu}-\int_{\partial\Omega}v'_{\Omega}\frac{\partial u_{\Omega}}{\partial\nu}\\
    &=&\int_{\Omega}uu'_{\Omega}+\int_{\partial\Omega}|\nabla u_{\Omega}||\nabla v_{\Omega}|V.\nu.\\
    &=&2\int_{\partial\Omega}|\nabla u_{\Omega}||\nabla v_{\Omega}|V.\nu.
\end{eqnarray*}
Then, we get
\begin{itemize}
     \item[(i)] $|\nabla u_{\Omega}||\nabla v_{\Omega}|=\frac{c}{2}\text{ on  }\partial \Omega$
     \item[(ii)] $|\nabla v_{\Omega}|=\frac{c}{2}\text{ on  }\partial \Omega$
     \item[(iii)] $|\nabla v_{\Omega}|=\frac{c}{2}|\nabla u_{\Omega}|\text{ on  }\partial \Omega$
     \item[(iv)] $|\nabla u_{\Omega}|=\frac{c}{2}\text{ on  }\partial \Omega$
\end{itemize}
Each of the above items tells us that $\Omega$ is an $N$-ball.
\end{proof}
\begin{remark}
Proposition 4.3 corresponds to the following triharmonic problem
\begin{equation*}
\left\{
\begin{array}{c}
\Delta^{3} w'_{\Omega}=0 \quad \text{ in  }\Omega\,,\\
 w'_{\Omega}=|\nabla w_{\Omega}|V.\nu,\quad \Delta w'_{\Omega}=|\nabla v_{\Omega}|V.\nu,\quad
 \Delta^{2} w'_{\Omega}=|\nabla u_{\Omega}|V.\nu \text{  on  }\partial\Omega\,, \\
-\frac{\partial w'_{\Omega}}{\partial\nu}=\Lambda\text{ on  }\partial \Omega.

\end{array}
\right.
\end{equation*}
where $\Lambda=cV.\nu$, $\Lambda=c|\nabla u_{\Omega}|V.\nu$, $\Lambda=c|\nabla u_{\Omega}|^{2}V.\nu$ or $\Lambda=c|\nabla v_{\Omega}|V.\nu$.
\end{remark}

\section{Concluding remarks}
\begin{remark}
Let $z_{\Omega}$ be the solution of $P(\Omega,\frac{1}{2}u^{2}_{\Omega})$.
\begin{proposition}
$\Omega$ is an $N$-ball if one of the following conditions is satisfied
\begin{itemize}
     \item[(i)] $-\frac{\partial z'_{\Omega}}{\partial\nu}=cV.\nu\text{ on  }\partial \Omega$
    \item[(ii)] $-\frac{\partial z'_{\Omega}}{\partial\nu}=c|\nabla u_{\Omega}|V.\nu\text{ on  }\partial \Omega$
     \item[(iii)] $-\frac{\partial z'_{\Omega}}{\partial\nu}=c|\nabla u_{\Omega}|^{2}V.\nu\text{ on  }\partial \Omega$
     \item[(iv)] $-\frac{\partial z'_{\Omega}}{\partial\nu}=c|\nabla v_{\Omega}|V.\nu\text{ on  }\partial \Omega$
   \end{itemize}
\end{proposition}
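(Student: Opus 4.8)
The plan is to reproduce verbatim the mechanism of Propositions 4.2 and 4.3, the only genuinely new ingredient being the domain derivative of the nonlinear source $\tfrac12 u_{\Omega}^{2}$. First I would write down the problem solved by $z'_{\Omega}$. Since $z_{\Omega}$ solves $P(\Omega,\tfrac12 u_{\Omega}^{2})$ and the right-hand side depends on the domain only through $u_{\Omega}$, differentiating with respect to the domain yields
\begin{equation*}
\left\{
\begin{array}{c}
-\Delta z'_{\Omega}=u_{\Omega}u'_{\Omega}\quad\text{in }\Omega,\\
z'_{\Omega}=|\nabla z_{\Omega}|\,V.\nu\quad\text{on }\partial\Omega,
\end{array}
\right.
\end{equation*}
where I used $\bigl(\tfrac12 u_{\Omega}^{2}\bigr)'=u_{\Omega}u'_{\Omega}$ together with $z_{\Omega}=0$ on $\partial\Omega$ (so that $-\partial z_{\Omega}/\partial\nu=|\nabla z_{\Omega}|$). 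This is the step I expect to demand the most care: unlike the linear sources entering $v'_{\Omega}$ and $w'_{\Omega}$, one must justify differentiating the quadratic source, after which everything downstream is purely linear bookkeeping. Note also that only the interior equation for $z'_{\Omega}$ is actually needed below.

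Next I would establish the master identity by Green's formula. Integrating the interior equation over $\Omega$ gives
$$\int_{\partial\Omega}-\frac{\partial z'_{\Omega}}{\partial\nu}\,d\sigma=\int_{\Omega}-\Delta z'_{\Omega}\,dx=\int_{\Omega}u_{\Omega}u'_{\Omega}\,dx.$$
Then, invoking the formula recalled in the Preliminaries for $F(\omega)=\int_{\omega}u_{\omega}^{2}$, namely $dF(\Omega,V)=2\int_{\Omega}u_{\Omega}u'_{\Omega}\,dx=2\int_{\partial\Omega}|\nabla u_{\Omega}||\nabla v_{\Omega}|\,V.\nu\,d\sigma$, I obtain
$$\int_{\partial\Omega}-\frac{\partial z'_{\Omega}}{\partial\nu}\,d\sigma=\int_{\partial\Omega}|\nabla u_{\Omega}||\nabla v_{\Omega}|\,V.\nu\,d\sigma,$$
which is the exact analogue of the identities driving Propositions 4.2 and 4.3.

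Finally I would insert each of the four boundary conditions into this identity. Because they are assumed to hold for every $V\in C^{2}(\mathbb{R}^{N},\mathbb{R}^{N})$ (as in Theorem 4.1), equality of the two boundary integrals for all $V$ forces pointwise equality of the coefficients of $V.\nu$ on $\partial\Omega$. Thus condition (i) gives $|\nabla u_{\Omega}||\nabla v_{\Omega}|=c$, which is precisely the overdetermined condition of $\mathcal{P}(1,c)$; condition (ii) gives $c|\nabla u_{\Omega}|=|\nabla u_{\Omega}||\nabla v_{\Omega}|$, hence $|\nabla v_{\Omega}|=c$; condition (iii) gives $|\nabla v_{\Omega}|=c|\nabla u_{\Omega}|$; and condition (iv) gives $c|\nabla v_{\Omega}|=|\nabla u_{\Omega}||\nabla v_{\Omega}|$, hence $|\nabla u_{\Omega}|=c$. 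In (ii) and (iii) one divides by $|\nabla u_{\Omega}|$, and in (iv) by $|\nabla v_{\Omega}|$, which is legitimate since both gradients are strictly positive on $\partial\Omega$ by Hopf's boundary point lemma. Each resulting equality is then closed by an already available symmetry result: (i) by the symmetry of $\mathcal{P}(1,c)$, (ii) and (iii) by items 1 and 3 of Theorem 2.3, and (iv) by Serrin's theorem. In every case $\Omega$ is an $N$-ball, completing the argument.
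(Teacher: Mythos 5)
Your proof is correct and follows essentially the same route as the paper: the divergence theorem turns $\int_{\partial\Omega}-\frac{\partial z'_{\Omega}}{\partial\nu}\,d\sigma$ into $\int_{\Omega}u_{\Omega}u'_{\Omega}\,dx$, which equals $\int_{\partial\Omega}|\nabla u_{\Omega}||\nabla v_{\Omega}|\,V.\nu\,d\sigma$ (the paper rederives this by Green's formula using $-\Delta v_{\Omega}=u_{\Omega}$ and the harmonicity of $u'_{\Omega}$, while you cite the equivalent formula $dF(\Omega,V)$ from the Preliminaries), and then each boundary condition reduces to one of the known overdetermined conditions closed by Serrin, Theorem 2.3, or the symmetry of $\mathcal{P}(1,c)$. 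Your added remarks on the arbitrariness of $V$, the Hopf lemma justifying division by the gradients, and the differentiation of the quadratic source make explicit what the paper leaves implicit.
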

\begin{proof}
By Green's formula
\begin{eqnarray*}
   \int_{\partial\Omega}-\frac{\partial z'_{\Omega}}{\partial\nu}=\int_{\Omega}u_{\Omega}u'_{\Omega} &=&\int_{\Omega}-\Delta v_{\Omega}u'_{\Omega}\\
    &=&\int_{\Omega}-\Delta u'_{\Omega}v_{\Omega}+\int_{\partial\Omega}v_{\Omega}\frac{\partial u'_{\Omega}}{\partial\nu}-\int_{\partial\Omega}u'_{\Omega}\frac{\partial v_{\Omega}}{\partial\nu}\\
    &=&\int_{\partial\Omega}|\nabla u_{\Omega}||\nabla v_{\Omega}|V.\nu.
\end{eqnarray*}
Then, we get
\begin{itemize}
     \item[(i)] $|\nabla u_{\Omega}||\nabla v_{\Omega}|=c\text{ on  }\partial \Omega$
     \item[(ii)] $|\nabla v_{\Omega}|=c\text{ on  }\partial \Omega$
     \item[(iii)] $|\nabla v_{\Omega}|=c|\nabla u_{\Omega}|\text{ on  }\partial \Omega$
     \item[(iv)] $|\nabla u_{\Omega}|=c\text{ on  }\partial \Omega$
\end{itemize}
Each of the above items tells us that $\Omega$ is an $N$-ball.
\end{proof}

\end{remark}

\begin{remark}
\begin{proposition}
 Balls are stationary for the functional $$J(\Omega)=\frac{F(\omega)}{G(\omega)},$$
where
 \begin{enumerate}
        \item $F(\omega)=|\omega|$ and $G(\omega)=\int_{\omega}u_{\omega}$
        \item $F(\omega)=\int_{\omega}u_{\omega}$ and $G(\omega)=\int_{\omega}u^{2}_{\omega}$
        \item $F(\omega)=|\omega|$ and $G(\omega)=\int_{\omega}u^{2}_{\omega}$
        \item $F(\omega)=\int_{\omega}u_{\omega}v_{\omega}$ and $G(\omega)=\int_{\omega}u^{2}_{\omega}$
        \item $F(\omega)=\int_{\omega}u_{\omega}v_{\omega}$ and $G(\omega)=\int_{\omega}u_{\omega}$
      \end{enumerate}
$u_{\omega}$ and $v_{\omega}$ being respectively the solution of $P(\omega,1)$ and $P(\omega,u_{\omega})$.
\end{proposition}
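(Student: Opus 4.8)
The plan is to read stationarity in the shape-optimization sense: $\Omega$ is stationary for $J=F/G$ when its shape gradient is constant along $\partial\Omega$, i.e. when $dJ(\Omega,V)=\lambda\int_{\partial\Omega}V.\nu\,d\sigma$ for some constant $\lambda$ and every field $V$ (the Lagrange condition for a critical point under the volume constraint; equivalently $dJ(\Omega,V)=0$ for all volume-preserving $V$). By the quotient rule $dJ(\omega,V)=\frac{1}{G(\omega)^{2}}\big(G(\omega)\,dF(\omega,V)-F(\omega)\,dG(\omega,V)\big)$, so everything reduces to writing $dF$ and $dG$ as boundary integrals and then specializing to a ball.

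First I would assemble the shape derivatives of the four building blocks that appear in the five pairs, each of which is a boundary integral $\int_{\partial\omega}g\,(V.\nu)\,d\sigma$. For the volume one has $g\equiv1$, and for $\int_\omega u_\omega^{2}$ the formula $dF(\omega,V)=2\int_{\partial\omega}|\nabla u_\omega||\nabla v_\omega|V.\nu\,d\sigma$ is already recalled in Section~2. For $\int_\omega u_\omega$ I would run the same Green's-formula computation as in Propositions~4.2 and 4.3: since $u_\omega$ vanishes on $\partial\omega$ the transport term drops and $d(\int_\omega u_\omega)(\omega,V)=\int_\omega u'_\omega=\int_{\partial\omega}|\nabla u_\omega|^{2}V.\nu\,d\sigma$, using $-\Delta u_\omega=1$, $\Delta u'_\omega=0$ and $u'_\omega=|\nabla u_\omega|V.\nu$ on $\partial\omega$ (alternatively, since $\int_\omega u_\omega=\int_\omega|\nabla u_\omega|^{2}$ this also follows from formula 3(b)). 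For $\int_\omega u_\omega v_\omega$ I would bring in the third potential $w_\omega$ (solution of $P(\omega,v_\omega)$, already used in Proposition~4.3) and integrate by parts twice to obtain a boundary integral whose density is a polynomial in $|\nabla u_\omega|,|\nabla v_\omega|,|\nabla w_\omega|$.

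With these in hand, in each of the five cases $dJ(\Omega,V)=\int_{\partial\Omega}\Phi\,(V.\nu)\,d\sigma$, where the density $\Phi=\frac{1}{G^{2}}(G\,g_{F}-F\,g_{G})$ is an explicit combination of $|\nabla u_\Omega|$, $|\nabla v_\Omega|$ and $|\nabla w_\Omega|$. I would then take $\Omega=B_R$: because the successive solutions $u_{B_R}$, $v_{B_R}$, $w_{B_R}$ are radial, the moduli $|\nabla u_{B_R}|$, $|\nabla v_{B_R}|$, $|\nabla w_{B_R}|$ are constant on the sphere $\partial B_R$ (they equal the explicit values $R/N$, $\frac{N}{N+2}(R/N)^{3}$, etc. computed just before Proposition~4.1). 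Hence $\Phi$ is constant on $\partial B_R$, so $dJ(B_R,V)=\Phi\int_{\partial B_R}V.\nu\,d\sigma$, i.e. the shape gradient of $J$ is constant along $\partial B_R$. This is exactly the Lagrange condition, so $B_R$ is stationary; in particular $dJ(B_R,V)=0$ for every volume-preserving $V$.

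The routine part is the sign bookkeeping in the Green's-formula manipulations (using $\partial u_\omega/\partial\nu=-|\nabla u_\omega|$ together with the harmonicity and right-hand sides of the domain derivatives). The main obstacle is the fourth building block $\int_\omega u_\omega v_\omega$ of cases (4) and (5): unlike the others its shape derivative is not recorded in Section~2, and obtaining it cleanly requires the auxiliary potential $w_\omega$ and a double integration by parts, after which one must check that the resulting density is genuinely radial on the ball. A secondary point worth stating carefully is the meaning of \emph{stationary}: without the volume constraint none of these (non scale-invariant) quotients admits the ball as an unconstrained critical point, so the constraint — equivalently, constancy of the shape gradient on $\partial\Omega$ — is essential.
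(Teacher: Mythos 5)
Your proposal is internally sound and it does prove the proposition as literally worded, but you should be aware that it proves the \emph{converse} of what the paper's own proof establishes. You argue: on a ball every shape-gradient density in sight is constant (by radiality of $u_{B_R}$, $v_{B_R}$, $w_{B_R}$), hence $dJ(B_R,V)=\Phi\int_{\partial B_R}V.\nu\,d\sigma$, which vanishes for every volume-preserving $V$ --- i.e.\ ``balls are stationary'' in the constrained sense. The paper argues in the opposite direction, which is also what its introduction announces (``stationary points of some functionals of a domain are balls''): it assumes $dJ(\Omega,V)=0$ for every field $V$, reads off from the quotient rule pointwise overdetermined conditions on $\partial\Omega$ --- $|\nabla u_{\Omega}|$ constant in case 1, $|\nabla v_{\Omega}|=\frac{1}{2J(\Omega)}|\nabla u_{\Omega}|$ in case 2, $|\nabla u_{\Omega}||\nabla v_{\Omega}|$ constant in case 3, proportionality conditions in cases 4--5 --- and then invokes Serrin's theorem and Theorem 2.3 to conclude that $\Omega$ is an $N$-ball. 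The two arguments are complementary rather than interchangeable: yours needs no symmetry theorem at all, only the explicit radial solutions computed before Proposition 4.1; the paper's needs Serrin and Payne--Schaefer but yields a uniqueness statement. A further technical divergence: your computation of $d\bigl(\int_{\omega}u_{\omega}v_{\omega}\bigr)$ via the third potential $w_{\omega}$ gives the density $2|\nabla u_{\omega}||\nabla w_{\omega}|+|\nabla v_{\omega}|^{2}$, whereas the paper's conditions in cases 4--5 correspond to the density $|\nabla v_{\omega}|^{2}$ alone (Hadamard's formula applied to $\int_{\omega}|\nabla v_{\omega}|^{2}$ as if its right-hand side $u_{\omega}$ were domain-independent, which drops the cross term); on a ball both densities are constant, so your direction is unaffected, but the paper's is.

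Your closing remark about the volume constraint is not a side issue; it is exactly where the two readings collide. As your scaling observation shows, no domain whatsoever is an \emph{unconstrained} critical point of these quotients: in case 1, $dJ(\Omega,V)=0$ for all $V$ would force $|\nabla u_{\Omega}|^{2}=G(\Omega)/F(\Omega)$ on $\partial\Omega$, hence $\Omega$ a ball by Serrin, yet on a ball $G/F=R^{2}/\bigl(N(N+2)\bigr)$ while $|\nabla u_{B_R}|^{2}=R^{2}/N^{2}$. So the paper's hypothesis, read unconstrained, is vacuous; read with the constraint, its overdetermined conditions hold only up to an additive Lagrange-multiplier constant, which the paper's proof silently drops. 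Your constrained formulation is the one under which ``balls are stationary'' is both true and non-vacuous; to fully reconcile your statement with the author's intent you would state the equivalence (constrained-stationary if and only if ball), of which you prove one implication and the paper, modulo the caveats above, proves the other.
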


\begin{proof}
If $\Omega$ is of class $C^{2}$, then for every vector field $V\in C^{2}(\mathbb{R}^{N},\mathbb{R}^{N})$
$$dJ(\Omega,V)=\frac{dF(\Omega,V)G(\Omega)-F(\Omega)dG(\Omega,V)}{[G(\Omega)]^{2}}.$$
Then $dJ(\Omega,V)=0$ implies
\begin{enumerate}
  \item $|\nabla u_{\Omega}|=\frac{1}{J(\Omega)}\text{ on }\partial\Omega$
  \item $|\nabla v_{\Omega}|=\frac{1}{2J(\Omega)}|\nabla u_{\Omega}|\text{ on }\partial\Omega$
  \item $|\nabla v_{\Omega}||\nabla u_{\Omega}|=\frac{1}{2J(\Omega)}|\nabla u_{\Omega}|\text{ on }\partial\Omega$
  \item $|\nabla v_{\Omega}|=2J(\Omega)|\nabla u_{\Omega}|\text{ on }\partial\Omega$
  \item $|\nabla v_{\Omega}|=J(\Omega)|\nabla u_{\Omega}|\text{ on }\partial\Omega$
\end{enumerate}
Each of the overdetermined conditions obtained above says that $\Omega$ is an $N$-ball.
\end{proof}
\end{remark}
\begin{remark}
One can have the same existence result for the functional used in Section 4. if we replace $\mathcal{O}_{C}$ by the following class of open sets: For
$\varepsilon>0$
$$\mathcal{O}_{\varepsilon}=\{\omega\subset D,\;\forall x\in\partial\omega,\;\partial\omega\cap B(x,\varepsilon)\text{ has }\bar{B}'_{\varepsilon}-PGN\}$$
${B}'_{\varepsilon}$ is the $(N-1)$-ball with center $x$ \cite{ba3}.
\end{remark}
\begin{remark}
For the problem $\mathcal{QS}(f,g)$ ($g$ is positive ant it attains its maximum on $\partial C$), one can have
$$\int_{C}fdx>\int_{\partial C}gd\sigma$$
as a sufficient condition of existence.\\
Now, let $a>0$ and put $C=[-1,1]\times\{0\}\subset\mathbb{R}^2$. Consider the problem
\begin{equation*}
\mathcal{QS}(a,g) \left\{
\begin{array}{c}
-\Delta u_{\Omega}=a\delta_C \quad \text{in  }\Omega\\
u_{\Omega}=0\text{ on  }\partial \Omega\\
 -\frac{\partial u_{\Omega}}{\partial \nu }=g\text{  on  }\partial
 \Omega
\end{array}
\right.
\end{equation*}
The problem $Q_S(a,g)$ has a solution if
$a>\frac{1}{\pi}\int_{\partial B}g(\sigma)d\sigma$.
$B$ being the unit ball in $\mathbb{R}^2$.\\
Notice that in the special case where $g\equiv k=const.$, the
the condition above becomes $a>2k$ and it is necessary and sufficient condition
of existence for $\mathcal{QS}(a,k)$ \cite{ba2}.
\end{remark}

\begin{remark}
Let $\Omega$ be a bounded and convex set which contains the interior of $C$ the convex hull of the support of $f$. The existence of a domain $C_{\Omega}$ which minimizes the ratio $$R(\omega)=\frac{\int_{\omega}f}{|\partial\omega|}$$ is obtained in the class of convex subset of $\Omega$. One calls the minimum above the $f$-Cheeger set of $\Omega$ and $\lambda(\Omega)$ the $f$-Cheeger constant. Using the same arguments as in Section 3., one can prove that
\begin{enumerate}
  \item either there exists $\Omega^{*}\supsetneq\bar\Omega$ which is a solution to $\mathcal{QS}(f,\lambda(\Omega))$,
  \item or $C_{\Omega}=\Omega$, i.e $\Omega$ is $f$-Cheeger in itself.
\end{enumerate}
\end{remark}

\begin{remark}
Let $\Omega$ be an open subset of $D$. Consider the Cheeger constant $$h(\Omega)=\min_{\omega\subset \Omega}\frac{|\partial\omega|}{|\omega|}.$$
Using the domain derivative of $h$ \cite{PS}, we prove the following
\begin{proposition}
If $\int_{C}f(x)dx>\sqrt{h(C)}|\partial C|$, then the problem ($Q_S(f,\sqrt{g(\Omega)})$) admits a solution where
$$g(\Omega)=\frac{|\Omega|}{|\partial C_{\Omega}|}((N-1)H_{\partial C}-h(\Omega))\chi_{\partial\Omega\cap\partial C_{\Omega}}+h(\Omega).$$
$C_{\Omega}$ being the unique Cheeger set of $\Omega$.
\end{proposition}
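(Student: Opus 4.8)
The plan is to derive the conclusion from the existence criterion for the quadrature surface problem with a \emph{variable} Neumann datum stated in the preceding concluding remark: $\mathcal{QS}(f,g)$ admits a solution as soon as $g$ is positive, attains its maximum on $\partial C$, and $\int_{C}f\,dx>\int_{\partial C}g\,d\sigma$. Applying this with $g$ replaced by $\sqrt{g(\Omega)}$, the whole statement reduces to the single scalar inequality
$$\int_{\partial C}\sqrt{g(\Omega)}\,d\sigma<\int_{C}f\,dx.$$
Since the hypothesis furnishes $\sqrt{h(C)}\,|\partial C|<\int_{C}f\,dx$, it is enough to prove the purely geometric bound $\int_{\partial C}\sqrt{g(\Omega)}\,d\sigma\le\sqrt{h(C)}\,|\partial C|$ (and to check the positivity/maximum hypotheses on $\sqrt{g(\Omega)}$).

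First I would remove the square root by Cauchy--Schwarz on $\partial C$:
$$\int_{\partial C}\sqrt{g(\Omega)}\,d\sigma\le |\partial C|^{1/2}\Big(\int_{\partial C}g(\Omega)\,d\sigma\Big)^{1/2},$$
so the target follows once I show $\int_{\partial C}g(\Omega)\,d\sigma\le h(C)\,|\partial C|$. Inserting the definition of $g(\Omega)$, the constant piece $h(\Omega)$ contributes exactly $h(\Omega)|\partial C|$, and what remains to control is the contact term
$$\frac{|\Omega|}{|\partial C_{\Omega}|}\int_{\partial\Omega\cap\partial C_{\Omega}}\big((N-1)H_{\partial C}-h(\Omega)\big)\,d\sigma.$$
Everything thus comes down to checking that this contact term is nonpositive.

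To see this I would invoke the structure of the Cheeger set supplied by the domain derivative of $h$ \cite{PS}. The free boundary $\partial C_{\Omega}\setminus\partial\Omega$ carries constant mean curvature $(N-1)H=h(\Omega)$, which is precisely why only the contact set survives in the first variation of $h$; dually, on the contact set $\partial\Omega\cap\partial C_{\Omega}$ the obstacle must be no more curved than this free boundary, giving the pointwise inequality $(N-1)H_{\partial C}-h(\Omega)\le 0$ there. Integrating this sign (and using that the prefactor $|\Omega|/|\partial C_{\Omega}|$ is positive) makes the contact term nonpositive, whence $\int_{\partial C}g(\Omega)\,d\sigma\le h(\Omega)|\partial C|$. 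Taking $\Omega$ so that $h(\Omega)=h(C)$ then closes the chain of inequalities and produces the existence criterion.

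The main obstacle will be the rigorous justification of this contact step: one must produce the first-variation formula for the Cheeger constant in the form $dh(\Omega,V)=\frac{1}{|C_\Omega|}\int_{\partial C_\Omega\cap\partial\Omega}((N-1)H_{\partial\Omega}-h(\Omega))\,V\!\cdot\!\nu\,d\sigma$, reconcile the normalisation it dictates with the constants $|\Omega|/|\partial C_\Omega|$ and the curvature $H_{\partial C}$ appearing in the definition of $g(\Omega)$, and guarantee enough regularity of the (unique) Cheeger set $C_\Omega$ for the mean curvature and the contact condition to be meaningful. Once the pointwise inequality $(N-1)H_{\partial C}\le h$ is secured on $\partial\Omega\cap\partial C_\Omega$, the remaining steps are the elementary Cauchy--Schwarz estimate and the direct comparison with the hypothesis.
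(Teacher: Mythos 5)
The paper itself contains no written proof of this proposition: the surrounding remark only announces the method, namely the domain derivative of the Cheeger constant from \cite{PS}, to be fed into the shape-optimization scheme of Section 3. Measured against that, your proposal is not a variant of the paper's argument but a different one, and it has a genuine gap: it proves a different, weaker statement. You treat $\Omega$ as an auxiliary domain you are free to choose (``taking $\Omega$ so that $h(\Omega)=h(C)$''), freeze the datum $\sqrt{g(\Omega)}$, and then invoke the sufficient condition of the preceding remark to produce a solution $\Omega^{*}$ of $\mathcal{QS}(f,\sqrt{g(\Omega)})$. But $\Omega^{*}$ is then a domain distinct from $\Omega$, and its free boundary generically never meets the null set $\partial\Omega\cap\partial C_{\Omega}$ which carries the entire curvature content of $g(\Omega)$; on $\partial\Omega^{*}$ the datum is a.e.\ the constant $\sqrt{h(\Omega)}$, so your argument collapses to Theorem 2.2 with $k=\sqrt{h(C)}$ (plus monotonicity of $h$ under inclusion), and the formula for $g(\Omega)$ becomes decorative. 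Read in context (the hint ``using the domain derivative of $h$'', the parallel with Section 3 where the overdetermined data are attained by the optimal domain itself, and the fact that $g$ is built from the solution's own Cheeger set), the proposition is self-referential: one must produce a single domain $\Omega$ solving $\mathcal{QS}(f,\sqrt{g(\Omega)})$ with $g$ computed from that same $\Omega$. That cannot be reached by quoting an existence criterion for a fixed external datum; it requires the variational route: minimize $\omega\mapsto h(\omega)|\omega|-\int_{\omega}|\nabla u_{\omega}|^{2}dx$ over $\mathcal{O}_{C}$ (existence of a minimizer via Theorem 2.1 and Propositions 2.1, 2.3), compute the first variation using the formula $dh(\omega,V)=\frac{1}{|C_{\omega}|}\int_{\partial\omega\cap\partial C_{\omega}}\bigl((N-1)H_{\partial\omega}-h(\omega)\bigr)V\cdot\nu\,d\sigma$ of \cite{PS}, so that the Euler equation reads $|\nabla u_{\Omega}|^{2}=g(\Omega)$ on $\partial\Omega$, and then use the hypothesis $\int_{C}f>\sqrt{h(C)}\,|\partial C|$ to exclude the degenerate case in which the minimizer sticks to $\partial C$.

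Two of your ingredients are nevertheless sound and would belong in the correct proof: the sign fact $(N-1)H_{\partial\Omega}\leq h(\Omega)$ on the contact set $\partial\Omega\cap\partial C_{\Omega}$ (standard for Cheeger sets, obtained from inward perturbations of $C_{\Omega}$ at contact points), and the Cauchy--Schwarz chain $\int_{\partial C}\sqrt{g(\Omega)}\,d\sigma\leq|\partial C|^{1/2}\bigl(\int_{\partial C}g(\Omega)\,d\sigma\bigr)^{1/2}\leq\sqrt{h(C)}\,|\partial C|$. This is essentially what the contradiction step needs: if the minimizer coincided with $\mathrm{int}(C)$, Green's formula and the variational inequality would give $\int_{C}f=\int_{\partial C}|\nabla u_{C}|\leq\int_{\partial C}\sqrt{g(C)}\leq\sqrt{h(C)}\,|\partial C|$, against the hypothesis. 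Finally, note a point you defer but never could discharge as stated: the positivity of $g(\Omega)$, needed even to write $\sqrt{g(\Omega)}$ and certainly to apply the criterion of the preceding remark (which demands a positive datum attaining its maximum on $\partial C$). On a flat portion of the contact set ($H_{\partial\Omega}=0$) one gets $g(\Omega)=h(\Omega)\bigl(1-\frac{|\Omega|}{|\partial C_{\Omega}|}\bigr)$, which is negative as soon as $|\Omega|>|\partial C_{\Omega}|$; so positivity is a real issue (of normalization and of hypotheses), not a routine verification, and it blocks the very first reduction of your proposal.
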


\end{remark}


Mohammed Barkatou\\
Laboratoire ISTM, Département de Mathématiques\\
Université Chouaib Doukkali, Maroc\\
barkatou.m@ucd.ac.ma

\end{document}